\documentclass{amsart}

\newtheorem{tm}{Theorem}[section]
\newtheorem{ap}{Assumption}[section]

\newtheorem{prop}{Proposition}[section]
\newtheorem{lm}{Lemma}[section]
\newtheorem{cor}{Corollary}[section]

\theoremstyle{remark}
\newtheorem{rk}{Remark}[section]

\numberwithin{equation}{section}

\usepackage{graphicx} 
\usepackage{extarrows}
\usepackage{latexsym}
\usepackage{amsmath}
\usepackage{amssymb}
\usepackage{amsfonts}
\usepackage{verbatim}
\usepackage{mathrsfs}
\usepackage{color}
\usepackage{xcolor}
\usepackage[colorlinks,citecolor=blue,urlcolor=blue]{hyperref}
\usepackage{soul}

\newcommand{\ee}{\mathbb E}

\newcommand{\pp}{\mathbb P}
\newcommand{\nn}{\mathbb N}
\newcommand{\rr}{\mathbb R}

\newcommand{\CC}{\mathcal C}
\newcommand{\DD}{\mathcal D}
\newcommand{\LL}{\mathcal L}

\newcommand{\PP}{\mathcal P}

\newcommand{\OOO}{\mathscr O}
\newcommand{\FFF}{\mathscr F}

\newcommand{\<}{\langle}
\renewcommand{\>}{\rangle}
\allowdisplaybreaks \allowdisplaybreaks[4]

\newcommand{\dd}{\mathrm{d}}

\begin{document}

\title[Uniform Weak and Ergodic Error Estimates of Superlinear SPDEs]
    {Uniform-in-Time Weak and Ergodic Error Estimates of a Nonlinearity-Explicit Full Discretization for Superlinear SPDEs Driven by Multiplicative Noise}

\author{Jingjing CAI}
\address{Department of Mathematics, Southern University of Science and Technology, Shenzhen, 518055, P. R. China}
\email{12431001@mail.sustech.edu.cn}
 
\author{Zhihui LIU}
\address{Department of Mathematics \& National Center for Applied Mathematics Shenzhen (NCAMS) \& Shenzhen International Center for Mathematics, Southern University of Science and Technology, Shenzhen, 518055, P. R. China}
\email{liuzh3@sustech.edu.cn (Corresponding author)}

\author{Xiaoming WU}
\address{Department of Mathematics, Southern University of Science and Technology, Shenzhen, 518055, P. R. China}
\email{12331004@mail.sustech.edu.cn}

\thanks{The authors are supported by the National Natural Science Foundation of China (NNSFC), No. 12101296, Basic and Applied Basic Research Foundation of Guangdong Province, No. 2024A1515012348, and Shenzhen Basic Research Special Project (Natural Science Foundation) Basic Research (General Project), No. JCYJ20240813094919026.}

\subjclass[2020]{Primary 60H35; Secondary 60H15, 65M60}

\date{}
 
\keywords{superlinear SPDE,
uniform-in-time weak convergence rate,
ergodic error estimate,
Malliavin calculus,
backward Kolmogorov equation}

\begin{abstract}
For a class of superlinear SPDEs driven by multiplicative noise, we prove an (essentially) sharp uniform-in-time (UIT) weak convergence rate for the nonlinearity-explicit Galerkin tamed Euler method (GTEM). Under standard monotonicity assumptions, the proof combines Malliavin calculus with regularity theory for the associated backward Kolmogorov equation (BKE), leading to UIT moment, H\"older, and Malliavin estimates, along with regularity estimates for the BKE solution.  
These estimates, together with a weak error decomposition and Malliavin integration by parts (IBP) formula, then yield a UIT weak convergence rate $\tau^\rho+\lambda_N^{-(\rho+\gamma/2)}$ for any $\rho \in (0,1)$, where $\gamma\in[0,1)$ quantifies the assumed spatial Sobolev regularity. 
Consequently, we obtain a sharp ergodic error estimate between the exact and numerical invariant measures. Numerical experiments support the theory.
\end{abstract}

\maketitle

\section{Introduction}\label{sec1}

Under random forcing, SPDEs are standard models for spatially extended systems, including stochastic reaction-diffusion and phase-field models such as the stochastic Allen--Cahn equation (SACE); see \cite{DZ14, Cerrai01}. In their long-time analysis, the ergodic invariant measure is central: it characterizes stationary statistical behavior and yields stationary expectations of observables \cite{DZ96}. This motivates the demand for numerical methods that accurately reproduce both finite-time transient dynamics and long-time statistical quantities, including expectations and invariant measures, with UIT error bounds.

Strong and weak approximations of SPDEs with globally Lipschitz coefficients are by now well developed; see, e.g., \cite{DP09, Debussche11, AL16, ACQS20, BD18}. The situation changes substantially when the drift is only monotone and polynomially growing. In that case, stability and moment bounds are no longer automatic; implicit, splitting, truncated, and tamed methods have been developed to recover finite-time convergence; see \cite{BJ19, JP20, BCH19, CH19, BGJK23, Wang20, WW25} and references therein. Long-time weak approximation is more demanding still: one needs numerical moment estimates, Kolmogorov regularity, and accumulated local weak-error bounds that are uniform over an infinite time horizon.

The long-time weak theory for parabolic SPDEs with superlinear drift has been largely confined to additive noise. Cui, Hong, and Sun \cite{CHS21} pioneered a  drift-implicit Euler--Galerkin (DIEG) scheme, proving UIT BKE regularity, UIT weak convergence, and invariant-measure approximation. Subsequent work extended these results to nonlinearity-explicit tamed discretizations \cite{Brehier22, JW25, WangCao26}; see also \cite{QiWang26} for a finite-element treatment and \cite{Brehier24, Brehier25} for total-variation estimates. While these works establish essentially sharp long-time weak convergence in the superlinear regime, they all hinge critically on the diffusion coefficient being independent of the solution. 

On the other hand, recent work has advanced finite-time weak approximation under multiplicative noise. Breit and Prohl \cite{BP24} proved first-order temporal weak convergence for a drift-implicit Euler-type scheme applied to the SACE with multiplicative colored noise; see also \cite{ZZZC26} for the DIEG scheme under multiplicative trace-class noise. However, the question of UIT weak errors and stationary weak bias remains open.

A different line of work has developed long-time stability, ergodicity, and strong approximation for superlinear SPDEs driven by multiplicative noise.  Numerical ergodicity and UIT strong estimates for DIEG schemes were obtained in \cite{Liu25, Liu26}; Liu and Shen \cite{LS25} introduced Lyapunov-preserving tamed-FEM, proved unique ergodicity and optimal strong convergence rates.  The recent tamed finite element analysis in \cite{CL26} further provides UIT strong error bounds and a quantitative approximation of invariant measures in Wasserstein distances. However, these results do not address the more demanding problem of UIT higher-order weak error estimates. To our knowledge, a UIT weak convergence result, along with a corresponding ergodic error estimate, has not yet been obtained for any numerical approximation of equations with both superlinear drift and multiplicative noise.

In this paper, we consider the second-order parabolic SPDE
\begin{align}\label{spde} 
& {\rm d} X(t, \xi)  
=[\Delta X(t, \xi)+f(X(t, \xi))] {\rm d}t
+g(X(t, \xi)) {\rm d}W(t, \xi),\quad (t, \xi) \in \rr_+\times \OOO,
\end{align}
with the homogeneous Dirichlet boundary condition (DBC) and initial condition $X_0$. Here, $\OOO \subset \rr^d$ ($d=1,2,3$) is a bounded open set with a piecewise smooth boundary $\partial \OOO$, $f: \rr \to \rr$ is monotone and polynomially growing, while $g:\rr \to \rr$ is Lipschitz in the standard infinite-dimensional sense, and $W$ is an infinite-dimensional Wiener process (cf. Section \ref{sec2}). The SACE -- a model for phase transitions under stochastic perturbations -- corresponds to the choice $f(\xi)=\epsilon^{-2}(\xi-\xi^3)$, with $\epsilon>0$ denoting the interface thickness; see \cite{BGJK23, BP24, CH19, FLZ17, HS23, JW25, LL26, Liu22, LQ20, LQ21} and references therein. 

The main difficulty stems from the interplay between the superlinear drift and the state-dependent diffusion.  The drift forces UIT Sobolev moment bounds on both the Galerkin solution and the GTEM. The state dependence of the diffusion, however, introduces extra stochastic terms into the first three variational equations, necessitating estimates for the corresponding derivatives of the BKE solution that handle both short-time singularities and long-time decay. Moreover, the local weak defects include stochastic-history terms whose discrete smoothing is not directly integrable. We treat these via Malliavin IBP formula; the diffusion defect, in particular, requires the third derivative of the BKE solution -- a feature with no analog in the additive-noise case.

Our main result, Theorem~\ref{tm-weak}, establishes an essentially optimal UIT weak error bound. Under an explicit sufficient dissipativity condition and the spatial regularity assumptions with parameter $\gamma\in[0,1)$ specified in Section~\ref{sec2}, for every constant $\rho \in (0,1)$ and test function $\varphi \in \CC_b^3(H)$, the error satisfies
\begin{align*}
|\ee [\varphi(X(t_m))-\varphi(X^N_m)]|
\le C (1+t_m^{-\rho}) (\tau^\rho+\lambda_N^{-(\rho+\gamma/2)}), \quad m \ge 2,
\end{align*}
where $C$ is independent of $m$, $N$, and $\tau$.  
Since $\rho$ can be chosen arbitrarily close to one, the temporal and spatial weak orders are essentially $1$ and $1+\gamma/2$, respectively; when $\gamma=0$, these are essentially twice the corresponding strong orders obtained in \cite{CL26}.
The same rate is inherited by regular observables of the invariant measures in the limit $t_m\to \infty$; see Corollary~\ref{cor-erg}.
 We note that the UIT weak and ergodic error estimates for the weak dissipativity case with nondegenerate multiplicative noise will be investigated in a separate paper, as we need to develop tools distinct from those of the present study.

The proof relies on a Kolmogorov--Malliavin strategy. We first derive UIT Sobolev moment, H\"older, and Malliavin estimates for the Galerkin solution, the GTEM, and its continuous interpolation. We then analyze the BKE associated with an auxiliary Galerkin system, obtaining UIT estimates for its first three derivatives from the variational equations.  
For the second and third derivatives, we establish mixed Sobolev estimates with one argument measured in $\dot H^\chi$. In $d=3$, choosing $\chi>0$ compensates for the dimension-dependent loss in the nonlinear product estimates and preserves the integrability of the short-time singularities; this is a key ingredient in our derivation of an essentially first-order temporal weak rate.
It\^o formula applied to the interpolation yields a telescoping weak-error representation; its local defects split into linear, drift, and diffusion components. The singular stochastic-history terms are handled via Malliavin IBP, and exponentially weighted discrete convolutions ensure the accumulated local error is UIT. These arguments extend the long-time weak convergence theory from additive to multiplicative noise for a nonlinearity-explicit tamed scheme. 

The rest of the paper is organized as follows. Section~\ref{sec2} introduces the setting, assumptions, numerical scheme, and main results. The uniform moment, H\"older, Malliavin, and Kolmogorov estimates are established in Section~\ref{sec3}. In Section~\ref{sec4}, we prove the UIT weak and ergodic error estimates. Section~\ref{sec5} presents numerical experiments, and the appendices collect auxiliary estimates.

\section{Preliminaries and Main Results}
\label{sec2}
 
\subsection{Notations}

In what follows, for $p \in [1, \infty]$, the usual real-valued Lebesgue spaces in $\OOO$, $\rr_+:=[0, \infty)$, and $\Omega$ will be denoted by $(L_\xi^p, \|\cdot\|_{L_\xi^p})$, $(L^p_t, \|\cdot\|_{L^p_t})$, and $(L_\omega^p, \|\cdot\|_{L_\omega^p})$, respectively. Mixed norms, such as $\|\cdot\|_{L^p_\omega L^{p_1}_t L_\xi^{p_2}}$, will also be used, with the order of the norms ordered according to the needs of the estimate.

Let $H:= \{u \in L_\xi^2 : u|_{\partial\OOO}=0\}$, with norm $\|\cdot\|$ (or $\|\cdot\|_{L_\xi^2}$) and inner product $\<\cdot,\cdot\>$. Let $\CC_b^3(H)$ be the space of three times continuously Fr\'echet differentiable functions on $H$ with bounded derivatives. Let $\LL:=\LL(H)$ denote bounded linear operators, with norm $\|\cdot\|_{\LL}$, and $A$ be the Dirichlet Laplacian on $H$. Then $-A$ has  eigenvalues $0<\lambda_1 \le \lambda_2 \le \cdots$ with eigenvectors $\{e_k\}_{k \in \nn_+}$ vanishing on $\partial \OOO$. Moreover,  $A$ is the infinitesimal generator of an analytic $\CC_0$-semigroup $S(t)=e^{A t}$, and one can define the fractional power $(-A)^{\theta/2}$, $\theta \in \rr$, with domain $\dot H^\theta$ equipped with the norm $\|u\|_\theta:=\|(-A)^{\theta/2} u\|$.
In particular, one has $\dot H^1=H_0^1 := \{u, \nabla u \in L_\xi^2 : u|_{\partial\OOO}=0\}$ with inner product $\<\cdot, \cdot\>_1:=\<\cdot, \cdot\>+\<\nabla \cdot, \nabla \cdot\>$, and its dual is $\dot H^{-1}$ with pairing ${_{-1}}\<\cdot, \cdot\>_1$. 
We will use the following well-known Poincar\'e inequalities:
\begin{align} \label{poin}
\|\nabla u\|^2 \ge \lambda_1 \|u\|^2, \quad u \in \dot H^1; \quad 
\|A v\|^2 \ge \lambda_1 \|\nabla v\|^2, \quad v \in \dot H^2.
\end{align} 
   
Let $U$ be another separable Hilbert space and $Q$ a self-adjoint and nonnegative definite trace-class operator on $U$.
Denote by $U_0:=Q^{1/2} U$ and $(\LL_2^\theta:=HS(U_0; \dot H^\theta), \|\cdot\|_{\LL_2^\theta})$ the space of Hilbert--Schmidt operators from $U_0$ to $\dot H^\theta$ for $\theta \in \rr_+$.
Let $W:=\{W(t):\ t \in \rr_+\}$ be a $U$-valued $Q$-Wiener process on $(\Omega,\FFF,(\FFF_t)_{t \in \rr_+},\pp)$, i.e., there exists an orthonormal basis $\{g_k\}_{k=1}^ \infty$ of $U$ diagonalizing $Q$ with the eigenvalues $\{q_k\}_{k=1}^ \infty$: $Q g_k= q_k g_k$, $k \in \nn_+$, and a sequence of mutually independent 1D Brownian motions $\{\beta_k\}_{k=1}^ \infty $ such that $W(t)=\sum_{k \in \nn_+} \sqrt{q_k} g_k \beta_k(t)$, $t \ge 0.$ 

Throughout, we use $C$, $c$, $c_1$, $\cdots$, to denote generic positive constants that are independent of various discrete parameters and may vary from line to line.

\subsection{Main assumptions}

In this subsection, we introduce the main assumptions on the coefficients in Eq. \eqref{spde}. 
Specifically, we impose the following coercivity, monotonicity, and polynomial growth conditions, as imposed in \cite{CL26, LS25}.

\begin{ap} \label{ap-f} 
$f: \rr \to \rr$ is three times continuously differentiable and there exist an integer $q\ge2$, constants $K_1,K_3 \in \rr$, $K_2>0$, and $L_i>0$, $i=1,\ldots,4$ such that for all $\xi \in \rr$ and $\tau \in (0, 1)$,  
\begin{align}  
 \xi f(\xi) \le K_1 - & K_2 |\xi|^{q+2},  \label{f-coe} \\ 
[1+\tau |\xi|^{2q}]f'(\xi) - q \tau |\xi|^{2(q-1)} \xi f(\xi) 
& \le K_3 (1+\tau|\xi|^{2q})^{3/2}, \label{f'} \\
|f(\xi)| \le L_1 + L_2 |\xi|^{q+1}, ~  
& |f'''(\xi)| \le L_3 + L_4 |\xi|^{q-2}. \label{f-grow}
\end{align}
\end{ap}

Throughout the paper, we assume that $q \ge 2$ when $d = 1,2$ and $q = 2$ when $d = 3$. We rely on the following Sobolev embeddings:
\begin{align}
    \dot H^1 & \hookrightarrow L^{2(q+1)}_\xi \hookrightarrow H \hookrightarrow \dot H^{-1} \hookrightarrow L^{\frac{2(q+1)}{2q+1}}_\xi, \quad d=1,2,3, \label{embedding} \\
    \dot H^{1+\gamma} & \hookrightarrow L^ \infty_\xi, \quad \text{where } \gamma = 0 \text{ if } d=1, \; 0<\gamma \text{ if } d=2, \text{ and } 1/2<\gamma \text{ if } d=3. \label{gamma}
\end{align}   
Then we can define the Nemytskii operator $F: \dot H^1  \to \dot H^{-1}$ and $G: H \rightarrow \LL_2^0 $ associated with $f$ and $g$ respectively,  by
\begin{align}  
F(u)(\xi):=f(u(\xi)),  &\quad u \in \dot H^1, ~ \xi \in \OOO,\label{df-F}\\
G(u)g_k(\xi):=g(u(\xi))g_k(\xi), & \quad u \in H, ~ k \in \nn_+,~ \xi \in \OOO.\label{df-G}
\end{align} 
With these definitions, Eq. \eqref{spde} can be written as the infinite-dimensional stochastic evolution equation
\begin{align} \label{see}
    {\rm d}X(t) = [A X(t) + F(X(t))] {\rm d}t + G(X(t)) {\rm d}W(t), \quad t \in \rr_+.
\end{align} 

To handle the superlinear growth allowed by Assumption~\ref{ap-f}, we follow \cite{LS25} and define the easily-formulated tamed function
 \begin{align} \label{f-tau}
 f_\tau (\xi):=\frac{f(\xi)}{(1+\tau|\xi|^{2q})^{1/2}}, \quad \xi \in \rr,
\end{align} 
where $\tau \in (0, 1)$ denotes the temporal step-size. 

We will frequently use the following facts about $f_\tau$ defined in \eqref{f-tau}, $F$ defined in \eqref{df-F}, and the corresponding tamed Nemytskii operator $F_\tau$ of $f_\tau$.

\begin{rk} 
Under Assumption \ref{ap-f}, \cite[Lemma 3.2]{CL26} showed that there exist constants $\tau_0 \in (0,1)$ and $T_1>0$ such that, for any $\tau \in (0,\tau_0]$, 
\begin{align} \label{ftau'+}
f_\tau'(\xi)+ \tau|f_\tau'(\xi)|^2 & \le K_3+K_3^2\tau, \quad \xi \in \rr,\\
\< w, F_\tau(w)\>_1 + \tau \|F_\tau(w)\|_1^2
& \le T_1+(K_3+K_3^2\tau) \|w\|_1^2, \quad w \in \dot H^1. \label{dis-mon} 
\end{align}   
\end{rk}

\begin{rk}\label{rk-F-bounds}
Under Assumption~\ref{ap-f}, we have
\begin{align}
|f_\tau'(\xi)| & \le C\tau^{-1/2}, \quad \xi \in \rr, \label{ftau'} \\
\tau\|F_\tau(u)\|_1^2 & \le C(1+\|u\|_1^2), \quad u \in \dot H^1, \label{tauFtau1} \\
\|F(u)-F_\tau(u)\| & \le C\tau (1+\|u\|_{L_\xi^{2(3q+1)}}^{3q+1}), \quad u \in L_\xi^{2(3q+1)}, \label{f-ftau} \\
\|F(u)\|_1+\|F_\tau(u)\|_1 & \le C (1+\|u\|_{L_\xi^ \infty}^q)(1+\|u\|_1),
\quad u \in \dot H^1\cap L_\xi^ \infty, \label{F-bounds-0}\\
\|DF(v)u\| & \le C (1+\|v\|_{L_\xi^ \infty}^q)\|u\|,
\quad v \in L_\xi^ \infty, ~u \in H. \label{F-bounds-1}
\end{align}
Furthermore, for $\varrho \in (1/4,1/2)$ and $w \in L_\xi^ \infty$, we have
\begin{align}
\|D^2F(w)(u_1,u_2)\|_{-2\varrho}
&\le C(1+\|w\|_{L_\xi^ \infty}^{q-1})
\|u_1\|_{\mu_1}\|u_2\|_{\mu_2},
\label{D2F-d}\\
\|D^3F(w)(u_1,u_2,u_3)\|_{-2\varrho}
&\le C(1+\|w\|_{L_\xi^ \infty}^{q-2})
\|u_1\|_{\nu_1}\|u_2\|_{\nu_2}\|u_3\|_{\nu_3},
\label{D3F-d}
\end{align}
provided $\mu_j,\nu_j \in [0,\min\{1, d/2\})$ with $\mu_1+\mu_2+2\varrho>d/2,~ \nu_1+\nu_2+\nu_3+\min\{2\varrho,\frac d 2\}>d$. 
The proofs of these estimates are given in Appendix~\ref{app-A}.

\end{rk}

Our main conditions on the diffusion operator $G: H \to \LL_2^0$ are as follows.

\begin{ap}\label{ap-G} 
\begin{enumerate}
\item[(1)]
$G$ is three times continuously differentiable and there exist positive constants $K_i$, $i=4,  \cdots, 8$ such that for any $u, h, k, l \in H$ and $v \in \dot H^1$, 
\begin{align}  
    \|DG(u) h\|_{\LL_2^0} & \le K_4 \|h\|, \label{DG} \\ 
    \|G(v) \|^2_{\LL_2^1} & \le K_5 + K_6 \|v\|^2_1,  \label{G1} \\
    \|D^2G(u) (h,k)\|_{\LL_2^0} & \le K_7 \|h\|\|k\|, \label{D2G}\\ 
    \|D^3G(u) (h,k,l)\|_{\LL_2^0} & \le K_8 \|h\|\|k\|\|l\|. \label{D3G} 
\end{align}   
\item[(2)]  
$G:\dot H^{1+\gamma}\longrightarrow \LL_2^{1+\gamma}$ grows linearly, i.e., there exist positive constants $K_9$ and $K_{10}$ such that  
\begin{align}
\|G(z)\|_{\LL_2^{1+\gamma}}
\le K_9+K_{10}\|z\|_{1+\gamma},
\qquad z\in\dot H^{1+\gamma}.
\label{G2}
\end{align} 
\end{enumerate}
\end{ap}

\begin{rk}
    As a byproduct of \eqref{DG}, $G$ is Lipschitz and grows linearly:
\begin{align} \label{G-lip}
    \|G(u)-G(v)\|_{\LL_2^0} \le K_4 \|u-v\|, \quad 
    \|G(u)\|_{\LL_2^0} \le L_5 (1+\|u\|), \quad  u,v \in H,
\end{align}
where $L_5$ is a positive constant depending only on $K_4$ and $G(0)$.
\end{rk}

\begin{rk} \label{ap-str}
Under \eqref{f'} and \eqref{DG} with $\lambda_1>K_3+\frac{2p^*-1}{2}K_4^2$ for a constant $p^* > 1$, the following coupled monotonicity condition holds with the positive constant $K:=\lambda_1-K_3-\frac{2p^*-1}{2}K_4^2$:
\begin{align} \label{D-coup}
_{-1} \<[A +D F(y)]x, x\>_1  + \frac{2p^*-1}{2} \|DG(y)x\|_{\LL_2^0}^2 
\le - K \|x\|^2, \quad x, y \in \dot H^1.
\end{align} 
\end{rk}

\subsection{GTEM}

To introduce the fully discrete scheme, let $N \in \nn_+$ and define $V_N:= \text{span}\{e_1, e_2, \dots, e_N\}$ as the finite-dimensional space spanned by the first $N$ eigenfunctions of $-A$.
We define the spectral Galerkin approximate Laplacian operator $A_N: V_N \to V_N$ and the generalized orthogonal projection operator $\PP_N: \dot H^{-1} \to V_N$, respectively, as 
\begin{align*}  
\<A_N u^N, v^N\> =-\<\nabla u^N, \nabla v^N\>, \quad 
\<\PP_N u, v^N\> & =_{-1}\<u, v^N\>_1,
\quad u^N, v^N \in V_N, ~ u \in \dot H^{-1}. 
\end{align*}  
The spectral Galerkin approximation of \eqref{see} is the following finite-dimensional stochastic differential equation on $V_N$, with $X^N(0)=\PP_N X_0$:
\begin{align}\label{spe}  
	\dd X^N(t)&=(A_N X^N(t)+\PP_N F(X^N(t)))\dd t+\PP_N G(X^N(t))\dd W(t),\quad t>0. 
\end{align}

Following \cite{LS25}, we replace $F$ in \eqref{spe} with the tamed drift $F_\tau$ to obtain a fully discrete scheme that is explicit in the nonlinearity. 
The resulting fully discrete method, referred to as the GTEM, seeks an $\{\FFF_{t_j}: ~j \in \nn\}$-adapted, $V_N$-valued discrete process $\{X^N_j: ~j \in \nn\}$ such that 
\begin{align}\label{gtem}
	X^N_{j+1}
=X^N_j+\tau A_N X^N_{j+1}
+\tau \PP_N F_\tau(X^N_j)
+\PP_N G(X^N_j) \delta_j W,
\end{align}
where $\delta_j W:=W(t_{j+1})-W(t_j),~ t_j=j \tau,~ j \in \nn$, with initial datum $X^N_0:=\PP_N X_0$. 
For $s\ge0$, define $ \ell(s):=\max\{j \in \mathbb N: t_j\le s\}$ and $[s]_\tau:=t_{\ell(s)}$.

As the scheme \eqref{gtem} is only linearly implicit, it can be solved pathwise uniquely. 
Moreover, $\{X^N_j, \FFF_{t_j}\}_{j \in \nn}$ is a (time-homogeneous) Markov chain. 
It is clear that the GTEM \eqref{gtem} is equivalent to the scheme
\begin{align}\label{full}
X^N_{j+1}=S_{N,\tau} X^N_j+\tau S_{N,\tau} \PP_N F_\tau(X^N_j)
+ S_{N,\tau} \PP_N G(X^N_j)\delta_j W,
\quad j \in \nn,
\end{align}
where $S_{N,\tau}:=({\rm Id}-\tau A_N)^{-1}$, with ${\rm Id}$ denoting the identity operator in $V_N$, is a space-time approximation of the continuous semigroup $\{S(t)=e^{A t}: t \ge 0\}$ in one step. 
By iteration, we get 
\begin{align}\label{full-sum}
X^N_j
=S_{N, \tau}^jX^N_0+\tau \sum_{i=0}^{j-1} S_{N, \tau}^{j-i} \PP_N F_\tau(X^N_i) +\sum_{i=0}^{j-1} S_{N, \tau}^{j-i} \PP_N G(X^N_i) \delta_i W,
\quad j \in \nn_+.
\end{align} 

We construct a continuous interpolation of the GTEM \eqref{gtem} or its equivalent representation \eqref{full-sum}: for $j \in \nn$, $\hat X^N(t_j)=X^N_j$, and $t \in [t_j, t_{j+1}]$, 
\begin{align} \label{xhat}
	\dd \hat X^N(t)=[A_N S_{N,\tau} X^N_j+S_{N,\tau} \PP_N F_\tau(X^N_j)] \dd t+S_{N,\tau}\PP_NG(X^N_j)\dd W(t).
\end{align}

\subsection{Main results}

Our main result is the following UIT weak error estimate for the GTEM \eqref{gtem} applied to \eqref{see}.
Throughout, without loss of generality, we assume that the initial datum $X_0$ is a deterministic element; all results remain valid provided $X_0 \sim \FFF_0$ possesses finite $p$-moments for sufficiently large $p \ge 2$. 

To formulate a convenient sufficient dissipativity condition, we set
\begin{align*}
\Lambda_{\rm w}:=\max \Big\{ \frac{4q+3}{2}K_4^2,\frac{15-4/q}{2}K_4^2, \Big((8q-2)(q+1)^2-\frac12 \Big)K_6 \Big\}.
\end{align*}
The first, second, and third terms arise from the positive-order Malliavin estimates, the third-order variational estimates, and the highest-order Sobolev moment estimate, respectively. Thus, $\Lambda_{\rm w}$ is a uniform sufficient threshold for $d=1,2,3$ and is not claimed to be the weakest condition in each dimension.

\begin{tm}\label{tm-weak} 
Let $X_0 \in \dot H^{1+\gamma}$ with $\gamma \in [0,1)$ satisfying \eqref{gamma} and Assumptions~\ref{ap-f}-\ref{ap-G} hold with $\lambda_1 > K_3 +\Lambda_{\rm w}$.
For any $\rho \in (0,1)$ and $\varphi \in \CC_b^3(H)$, there exist constants $C$ and $\tau_{\max} \in (0,1)$ such that for any $\tau \in (0, \tau_{\max}]$, $N \in \nn_+$, and $m \ge 2$,
\begin{align} \label{weak}
|\ee [\varphi(X(t_m))-\varphi(X^N_m)]|
\le C(1+\|X_0\|_{1+\gamma}^{(8q-1)(q+1)^2}) (1+t_m^{-\rho}) (\tau^\rho+\lambda_N^{-(\rho+\gamma/2)}).
\end{align}
\end{tm}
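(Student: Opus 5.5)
We split the error into a spatial and a temporal part:
\begin{align*}
\ee[\varphi(X(t_m))-\varphi(X^N_m)]
=\ee[\varphi(X(t_m))-\varphi(X^N(t_m))]+\ee[\varphi(X^N(t_m))-\varphi(X^N_m)],
\end{align*}
where $X^N$ is the Galerkin solution of \eqref{spe}. We treat the two parts separately.

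\textbf{Spatial part.} For the first term we use the backward Kolmogorov equation of the Galerkin system, $u^N(t,x):=\ee\varphi(X^N(t;x))$. Applying It\^o's formula to $s\mapsto u^N(t_m-s,\PP_N X(s))$ gives a representation of the error through:
\begin{itemize}
\item the initial projection error $u^N(t_m,\PP_N X_0)-u^N(t_m,X_0)$;
\item the drift defect $\<Du^N,(\PP_N-{\rm Id})F(X)\>$;
\item the diffusion defect $\frac12\mathrm{Tr}[D^2u^N(G G^*-\PP_N G G^*\PP_N)]$.
\end{itemize}
Each defect carries the factor $({\rm Id}-\PP_N)$, which we convert into $\lambda_N^{-(\rho+\gamma/2)}$. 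This uses the UIT Sobolev moment bounds in $\dot H^{1+\gamma}$ from Section~\ref{sec3}, together with the bounds \eqref{G2} and \eqref{F-bounds-0}. The missing $\lambda_N^{-\rho}$ is absorbed by negative-order estimates $\|Du^N(t)\|_{\rho}\lesssim e^{-ct}(1+t^{-\rho/2}\cdots)$ and $\|D^2u^N(t)\|_{\LL(\dot H^{-\rho},\dot H^{\chi})}$. The exponential decay of these derivative estimates comes from the coupled monotonicity \eqref{D-coup}, applied to the first and second variational equations, and makes the time integral over $[0,t_m]$ bounded uniformly in $t_m$.

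\textbf{Temporal part.} For the second term we telescope along the interpolation \eqref{xhat}:
\begin{align*}
\ee\varphi(X^N_m)-\ee\varphi(X^N(t_m))=\sum_{j=0}^{m-1}\ee\big[u^N(t_m-t_{j+1},X^N_{j+1})-u^N(t_m-t_j,X^N_j)\big].
\end{align*}
Applying It\^o's formula on $[t_j,t_{j+1}]$ and subtracting the Kolmogorov equation, each local defect splits into three components:
\begin{itemize}
\item the \emph{linear} defect $\<Du^N,A_N(S_{N,\tau}X^N_j-\hat X^N(t))\>$;
\item the \emph{drift} defect $\<Du^N,S_{N,\tau}\PP_NF_\tau(X^N_j)-\PP_NF(\hat X^N(t))\>$, containing the taming error \eqref{f-ftau};
\item the \emph{diffusion} defect $\frac12\mathrm{Tr}[D^2u^N(\cdot)(S_{N,\tau}G(X^N_j)\cdots-G(\hat X^N)\cdots)]$.
\end{itemize}

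The deterministic parts of $\hat X^N(t)-X^N_j$ give order $\tau$ directly, at the price of a factor $(t_m-t)^{-\rho}$ from the singularities of $Du^N$ and $D^2u^N$. The stochastic increment $\int_{t_j}^tS_{N,\tau}\PP_NG(X^N_j)\dd W$ contributes only order $\tau^{1/2}$ strongly. Inside the drift and linear defects we therefore Taylor expand. The first-order term involves $\ee\<Du^N,DF(X^N_j)\int G\,\dd W\>$, and we remove it with the Malliavin IBP formula, which moves the stochastic integral onto a Malliavin derivative of $Du^N(\cdot,\hat X^N)$ and $DF$. For this we need the UIT Malliavin estimates on $\mathcal D_r\hat X^N(t)$ for $r$ close to $t$, together with $D^2u^N$ and $D^2F$ via \eqref{D2F-d}. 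In the diffusion defect, the analogous first-order term $D^2u^N(\cdot)(DG(X^N_j)\int\cdots,\cdot)$ forces one more IBP and hence requires $D^3u^N$, with mixed $\dot H^\chi$ estimates and \eqref{D3F-d}, \eqref{D3G}.

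\textbf{Summation.} The resulting local bound is of the form
\begin{align*}
C\,\tau\, e^{-c(t_m-t_j)}\Big(1+(t_m-t_{j+1})^{-\rho}\Big)(1+t_j^{-\rho})\,\tau^\rho,
\end{align*}
with polynomial moment factors in $\|X_0\|_{1+\gamma}$. The initial singularities $t_j^{-\rho}$ come from Hölder continuity of the interpolation near $t=0$. The final step ($j=m-1$) and the first step are handled crudely, which is why we need $m\ge2$ and the factor $(1+t_m^{-\rho})$. The sum of these bounds is then estimated by an exponentially weighted discrete convolution,
\begin{align*}
\sum_j\tau e^{-c(t_m-t_j)}(t_m-t_{j+1})^{-\rho}t_j^{-\rho}\le C(1+t_m^{-\rho}),
\end{align*}
which is uniform in $m$. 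This yields \eqref{weak}.

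\textbf{Main obstacle.} We expect the hardest step to be the diffusion defect. It requires UIT estimates of $D^3u^N$ that are simultaneously integrable at short times (especially in $d=3$, where the choice $\chi>0$ is needed) and exponentially decaying at long times. These have to come from the third variational equation, which carries extra stochastic terms because $G$ is state-dependent; this is where the threshold $\lambda_1>K_3+\Lambda_{\rm w}$ enters. We would first derive $L^p_\omega$ bounds of the variational processes, using \eqref{D-coup} with $p^*$ large, and feed them into the representation $D^3u^N=\ee[D^3\varphi(\cdot)+D^2\varphi(\eta,\zeta)+D\varphi(\xi^{(3)})]$.
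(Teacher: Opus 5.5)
\textbf{Spatial part.} As written, your representation is inconsistent. Since $u^N(t,\cdot)$ is a function on $V_N$, $Du^N$ and $D^2u^N$ only act on $V_N$. So the defects you list, $\<Du^N,(\PP_N-{\rm Id})F(X)\>$ and $\mathrm{Tr}[D^2u^N(GG^*-\PP_NGG^*\PP_N)]$, vanish identically, and $u^N(t_m,\PP_NX_0)-u^N(t_m,X_0)$ is zero (or undefined).

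It\^o's formula for $u^N(t_m-s,\PP_NX(s))$ actually compares $\ee\varphi(X^N(t_m))$ with $\ee\varphi(\PP_NX(t_m))$. Its true defects are the nonlinear differences $\PP_N[F(X)-F(\PP_NX)]$ and $\PP_N[G(X)-G(\PP_NX)]$ (inside the trace), plus the end-time term $\varphi(X(t_m))-\varphi(\PP_NX(t_m))$. Here $({\rm Id}-\PP_N)$ acts on $X$ inside $F$, $G$, $\varphi$, so negative-order bounds on $Du^N$, $D^2u^N$ do not help; for instance, \eqref{DG} controls $DG(\cdot)h$ only through $\|h\|$. Closing this route would need an extra UIT higher-regularity estimate of $X$ (roughly in $\dot H^{2\rho+\gamma}$, with an initial singularity) to bound $\|({\rm Id}-\PP_N)X(s)\|$ directly, and your plan does not contain one.

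The defects you wrote belong to the reverse pairing, which is what the paper uses. It takes the Kolmogorov equation \eqref{kol} of a finer Galerkin system $X^M$, $M\gg N$, and evaluates it along the coarse interpolation $\hat X^N$. The spatial defects then become $(\PP_N-\PP_M)F(\hat X^N)$ and $\PP_M(\PP_N-{\rm Id})G(X_i^N)$. These are projections of $\dot H^{1+\gamma}$- and $\LL_2^{1+\gamma}$-valued quantities (by \eqref{G2}), to which \eqref{Du} and \eqref{D2u-mix} apply with $\|\cdot\|_{-2\varrho}$. The limit $M\to\infty$ is removed by the strong estimate of \cite{LS25}, and spatial and temporal defects are handled in one telescoping sum.

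\textbf{Temporal part.} Telescoping with $u^N$ along $\hat X^N$ is legitimate (it is the paper's argument with $M=N$), but the main difficulty is missed.

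First, with multiplicative noise the derivative bounds allow only negative-order exponents below $1/2$ ($\alpha,\vartheta<1/2$ in \eqref{Du}, \eqref{D2u-mix}, because of the It\^o term in the variational equations). So you cannot trade powers of $-A$ for a factor $(t_m-t)^{-\rho}$ with $\rho$ near $1$.

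Second, and more important, the linear defect contains $\tau A_N^2S_{N,\tau}X_i^N$ and $(t-t_i)A_N^2S_{N,\tau}X_i^N$. These are not ``order $\tau$ directly'', since $X_i^N$ is only in $\dot H^{1+\gamma}$. The paper expands $X_i^N$ via \eqref{full-sum}. The initial datum and the drift history are smoothed by $S_{N,\tau}^i$; this is the source of the $t_i^{-1+\varepsilon}$ singularity. The bound $\|(-A)^{1/2-\varrho+\varepsilon}S_{N,\tau}\|_{\LL}\le C\tau^{-1/2+\varrho-\varepsilon}$ then yields $\tau^{1/2+\varrho-\varepsilon}$.

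The stochastic history $\sum_{j<i}(-A_N)^2S_{N,\tau}^{i+1-j}\PP_NG(X_j^N)\delta_jW$, however, is not square-integrable near $s=t_i$, and this is where Malliavin IBP is indispensable. The far part $s\in[0,a_i]$, $a_i=(t_i-1)_+$, is bounded by It\^o isometry and \eqref{noise-smo-est}. The near part is converted by \eqref{ibp} into $D^2U^M(\DD_s\hat X^N,\cdot)$. That requires bounds on $\DD_s\hat X^N(t)$ in $\LL_2^\chi$ over the whole window $s\in[a_i,t_i]$, with decay $e^{-c(t-[s]_\tau)}$ (\eqref{mall-est-local-p}, \eqref{mall-est-local-theta}), not only for $s$ close to $t$.

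The same history terms reappear inside $DF(X_i^N)$ and $DG(X_i^N)$ (the terms $T_3^i$ and $R_{113}^i$). The IBP in $R_{113}^i$, together with freezing $D^2U^M(\hat X^N)$ at $X_i^N$, is where $D^3U^M$ enters. Conversely, the local increments $\int_{t_i}^tS_{N,\tau}\PP_NG(X_i^N)\dd W$, where you planned to use IBP, need no Malliavin calculus. After freezing the derivative at $X_i^N$ they vanish by the martingale property, and the remainder is of order $\tau^{1/2+\varrho}$ or better by \eqref{holder}.
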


We point out that, when $\gamma=0$, the above weak convergence rate \eqref{weak} is essentially sharp, being twice the strong convergence rate $\mathcal O(\tau^{1/2}+\lambda_N^{-1/2})$ established in \cite[Theorem~4.1]{CL26} under the same-type conditions as in Theorem~\ref{tm-weak}; when $\gamma>0$, the additional spatial regularity improves the spatial weak order by $\gamma/2$. 
 
Under the dissipativity condition in Theorem~\ref{tm-weak}, the exact equation and the GTEM are exponentially mixing and possess unique invariant measures $\pi$ on $H$ and $\pi_\tau^N$ on $V_N$, respectively; see \cite[Theorem~2.1]{Liu26} and \cite[Lemma~5.1 and Theorem~5.1]{CL26}. 
As a byproduct of the above UIT weak error estimate \eqref{weak}, we have the following optimal ergodic estimate between $\pi$ and $\pi_\tau^N$.

\begin{cor}\label{cor-erg} 
Let $X_0 \in \dot H^{1+\gamma}$ with $\gamma \in [0,1)$ satisfying \eqref{gamma} and Assumptions~\ref{ap-f}-\ref{ap-G} hold with $\lambda_1 > K_3 +\Lambda_{\rm w}$.
For any $\rho \in (0,1)$ and $\varphi \in \CC_b^3(H)$, there exist constants $C$ and $\tau_{\max} \in (0,1)$ such that for any $\tau \in (0, \tau_{\max}]$ and $N \in \nn_+$,
\begin{align*}
	\Big| \int_H \varphi(x) \pi(dx)- \int_{V_N}\varphi(x) \pi_\tau^N(dx)\Big|
\le C (\tau^\rho+\lambda_N^{-(\rho+\gamma/2)}).
\end{align*}
\end{cor}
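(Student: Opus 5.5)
The plan is to pass to the limit $m\to\infty$ in the uniform-in-time estimate \eqref{weak} of Theorem~\ref{tm-weak}, using the exponential mixing of both the exact solution and the GTEM. Fix $\rho\in(0,1)$ and $\varphi\in\CC_b^3(H)$, and choose the deterministic initial datum $X_0=0\in\dot H^{1+\gamma}$. With this choice the prefactor $1+\|X_0\|_{1+\gamma}^{(8q-1)(q+1)^2}$ equals $1$. Any fixed $X_0\in\dot H^{1+\gamma}$ would also work, since then the prefactor is a constant independent of $\tau$ and $N$. Then write
\begin{align*}
\Big|\int_H\varphi\,\dd\pi-\int_{V_N}\varphi\,\dd\pi_\tau^N\Big|
&\le \Big|\int_H\varphi\,\dd\pi-\ee\varphi(X(t_m))\Big|
+|\ee[\varphi(X(t_m))-\varphi(X^N_m)]| \\
&\quad+\Big|\ee\varphi(X^N_m)-\int_{V_N}\varphi\,\dd\pi_\tau^N\Big|,
\end{align*}
valid for every $m\ge2$.

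For the middle term we apply \eqref{weak} directly. For $t_m\ge1$ the factor $1+t_m^{-\rho}$ is bounded by $2$, which gives the bound $2C(\tau^\rho+\lambda_N^{-(\rho+\gamma/2)})$ with $C$ independent of $m$, $N$ and $\tau$.

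For the first and third terms we invoke the exponential mixing quoted just before the corollary, namely \cite[Theorem~2.1]{Liu26} for $X$ and \cite[Lemma~5.1 and Theorem~5.1]{CL26} for the GTEM. Since $\varphi$ is bounded and Lipschitz, these give convergence of $\ee\varphi(X(t))$ to $\int\varphi\,\dd\pi$, and of $\ee\varphi(X^N_m)$ to $\int\varphi\,\dd\pi^N_\tau$, at exponential rates. The only care needed is that the mixing results may carry constants depending on $N$ and $\tau$. This is harmless here: we only need each of these two terms to vanish as $m\to\infty$ for fixed $(\tau,N)$, not uniformly in $(\tau,N)$. This is exactly the step where the uniformity in time of \eqref{weak} is essential, since the middle bound does not deteriorate as $m\to\infty$.

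Letting $m\to\infty$ with $\tau\in(0,\tau_{\max}]$ and $N$ fixed then yields the claimed estimate, with the same $\tau_{\max}$ as in Theorem~\ref{tm-weak}. The expected obstacle is only bookkeeping. One has to check that the mixing statements hold under the assumption $\lambda_1>K_3+\Lambda_{\rm w}$; they do, because $\Lambda_{\rm w}$ dominates the thresholds used in \cite{Liu26,CL26}. One also has to check that the mixing for the numerical chain applies from the initial datum $\PP_N X_0$, which requires the chain to start in the class of measures with finite moments, as is the case for a deterministic initial datum.
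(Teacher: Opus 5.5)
Your proposal is correct and follows the same route the paper indicates: the paper obtains the corollary as a byproduct of Theorem~\ref{tm-weak} by letting $t_m\to\infty$ in \eqref{weak} (where $1+t_m^{-\rho}\le 2$), using the exponential mixing of $X$ toward $\pi$ and of the GTEM toward $\pi_\tau^N$ from \cite{Liu26,CL26}. Your remarks that the mixing constants may depend on $(\tau,N)$ without harm, and that fixing $X_0=0$ makes the prefactor trivial, supply exactly the bookkeeping the paper leaves implicit.
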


\section{UIT A Priori Estimates}
\label{sec3}
 
This section provides the estimates required for the subsequent UIT weak error estimates in Section \ref{sec4}.
We first recall uniform $\dot H^1$- and $\dot H^{1+\gamma}$-moment bounds for the spectral Galerkin solution and prove the corresponding estimates for the GTEM and its interpolation.
Then, we establish regularity estimates for the BKE associated with \eqref{spe} and corresponding Malliavin derivative estimates.

\subsection{Moment and continuity estimates} 
Let us begin with the following UIT moment estimates of the spectral Galerkin approximation \eqref{spe} in the $\dot H^1$- and $\dot H^{1+\gamma}$-norm.
We omit the details of the proof, as it is analogous to that of \cite[Proposition 2.1]{Liu26} and \cite[Proposition 3.1]{LQ21} for \eqref{see}.

\begin{lm}  \label{lm-xn}
Let $p\ge2$, $X_0 \in \dot H^{1+\gamma}$ with $\gamma \in [0,1)$ satisfying \eqref{gamma}, and Assumption~\ref{ap-f} and \eqref{G1} hold. Assume $\lambda_1>K_3+\frac{p-1}{2}K_6$ for $\gamma=0$, and $\lambda_1 > K_3 + \frac{p(q+1)-1}{2} K_6$ for $\gamma \in (0,1)$. There exists a positive constant  $C$ such that 
\begin{equation}\label{xn}
  \ee \|X^N(t)\|_{1+\gamma}^{p} 
\le
\begin{cases}
C (1+\|X_0\|_1^{p}), & \gamma=0,\\
C (1+\|X_0\|_{1+\gamma}^{p(q+1)}), & \gamma \in (0,1).
\end{cases}
\end{equation}
\end{lm}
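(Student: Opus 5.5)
The plan is to apply It\^o's formula to $\|X^N(t)\|_1^p$ for $\gamma=0$, and then bootstrap to $\dot H^{1+\gamma}$ for $\gamma\in(0,1)$. In both steps a Gronwall-type argument with exponential weight $e^{ct}$ turns dissipativity into time-uniform bounds.

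\textbf{Case $\gamma=0$.} Apply It\^o's formula to $\|X^N\|_1^2=\|(-A_N)^{1/2}X^N\|^2$; this is legitimate because \eqref{spe} is a finite-dimensional SDE. Since $\PP_N$ commutes with $(-A)^{1/2}$, we get
\begin{align*}
\dd\|X^N\|_1^2=\big[-2\|X^N\|_2^2+2\<X^N,F(X^N)\>_1+\|\PP_N G(X^N)\|_{\LL_2^1}^2\big]\dd t+\dd M_t .
\end{align*}
We interpret $\<X^N,F(X^N)\>_1=\<\nabla X^N,f'(X^N)\nabla X^N\>$ and bound it by $K_3\|X^N\|_1^2$. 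For this we use $f'\le K_3$, which is \eqref{f'} with $\tau\to0$. By \eqref{poin}, $\|X^N\|_2^2\ge\lambda_1\|X^N\|_1^2$. For $\|X^N\|_1^p=(\|X^N\|_1^2)^{p/2}$, the second-order It\^o term is controlled by $\frac{p(p-2)}{2}\|X^N\|_1^{p-4}\|\PP_NG(X^N)^*(-A)X^N\|^2$, after which \eqref{G1} enters. Collecting terms, the generator satisfies
\begin{align*}
\LL\|x\|_1^p\le p\|x\|_1^{p-2}\Big[-\big(\lambda_1-K_3-\tfrac{p-1}{2}K_6\big)\|x\|_1^2+C\Big]\le -c\|x\|_1^p+C .
\end{align*}
Here $\lambda_1>K_3+\frac{p-1}{2}K_6$ gives $c>0$, and Young's inequality absorbs the lower-order terms. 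We then take expectations, using a localization by stopping times, and apply It\^o's formula to $e^{ct}\|X^N\|_1^p$. This yields $\ee\|X^N(t)\|_1^p\le e^{-ct}\|X_0\|_1^p+C/c$ uniformly in $t$ and $N$.

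\textbf{Case $\gamma\in(0,1)$.} Use the mild form $X^N(t)=S_N(t)X^N_0+\int_0^t S_N(t-s)\PP_NF(X^N)\dd s+\int_0^tS_N(t-s)\PP_NG(X^N)\dd W$, with $\|S(t)\|_{\LL(\dot H^\alpha,\dot H^\beta)}\le Ct^{-(\beta-\alpha)/2}e^{-\lambda_1 t/2}$. The three terms are handled as follows.
\begin{itemize}
\item The initial term is bounded by $\|X_0\|_{1+\gamma}$.
\item For the drift term we use \eqref{F-bounds-0}: $\|F(X^N)\|_1\le C(1+\|X^N\|_{L^\infty}^q)(1+\|X^N\|_1)$. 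The factor $\int_0^t(t-s)^{-\gamma/2}e^{-\lambda_1(t-s)/2}\dd s$ is uniformly bounded.
\item For the stochastic term we use the factorization method or the BDG inequality together with \eqref{G2}.
\end{itemize}
The $L^\infty$ norm is the crux. We bound $\|X^N\|_{L^\infty}$ through the embedding $\dot H^{1+\gamma}\hookrightarrow L^\infty$ at a slightly smaller exponent $\gamma'$, or else through an interpolation in which the $\dot H^{1+\gamma}$ norm appears linearly. The polynomial term is then controlled by high $\dot H^1$-moments, of order $p(q+1)$, from the first case. This is exactly why the condition $\lambda_1>K_3+\frac{p(q+1)-1}{2}K_6$ and the power $\|X_0\|_{1+\gamma}^{p(q+1)}$ appear. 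The $G$-term is linear in $\|X^N\|_{1+\gamma}$, so it produces a convolution inequality $y(t)\le C+\int_0^t k(t-s)\,y(s)\dd s$ with a kernel of small exponentially weighted mass. Alternatively, I would run the It\^o/generator argument directly for $\|X^N\|_{1+\gamma}^p$, splitting $\<(-A)^{1+\gamma}X,F(X)\>$ through the $\dot H^{\gamma}$--$\dot H^{2+\gamma}$ duality and Young's inequality.

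\textbf{Main obstacle.} The hardest step is showing that the bound on $\|F(X^N)\|$ in $\dot H^\gamma$ (or $\dot H^1$) closes uniformly in time. It must use only $\dot H^1$ moments and the linear-growth bound \eqref{G2}, without a Gronwall argument that would grow exponentially in $t$. I would first try to bound the $F$-contribution in the mild formulation entirely by $\sup_s\ee(1+\|X^N(s)\|_1^{(q+1)p})$, using $d\le3$ Sobolev embeddings. When $d=3$ I would fall back on the restriction $q=2$.
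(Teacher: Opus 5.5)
\textbf{The $\gamma=0$ case is fine.} The generator computation for $\|x\|_1^p$ uses $f'\le K_3$ (from \eqref{f'} as $\tau\to0$), \eqref{poin} and \eqref{G1}. It yields exactly the threshold $\lambda_1>K_3+\frac{p-1}{2}K_6$. It is the continuous counterpart of the energy recursion in the proof of Proposition~\ref{prop-h1+}; the paper itself omits the proof of this lemma and refers to analogous results.

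\textbf{The gap is in the case $\gamma\in(0,1)$, in the stochastic convolution.} You invoke \eqref{G2}, but it is not a hypothesis of the lemma: only Assumption~\ref{ap-f} and \eqref{G1} are assumed, and the dissipativity condition involves only $K_3$ and $K_6$. You then close a renewal inequality $y(t)\le C+\int_0^t k(t-s)y(s)\,\dd s$ by asserting that $k$ has small mass. Nothing guarantees this. The mass is of order $C_{\mathrm{BDG}}K_{10}^2/\lambda_1$, and once it reaches $1$ the inequality allows linear or exponential growth in $t$. So no uniform-in-time bound follows. Your alternative It\^o argument directly in $\dot H^{1+\gamma}$ has the same defect: its It\^o correction $\|G(X^N)\|_{\LL_2^{1+\gamma}}^2$ would have to be dominated by $\lambda_1\|X^N\|_{1+\gamma}^2$, which again needs an unassumed smallness of $K_{10}$.

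\textbf{The missing idea: $\|X^N\|_{1+\gamma}$ never needs to appear on the right-hand side.} Use the smoothing of the semigroup instead.

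For the noise, $\|S_N(t-s)\PP_NG(X^N(s))\|_{\LL_2^{1+\gamma}}\le C(t-s)^{-\gamma/2}e^{-c(t-s)}\|G(X^N(s))\|_{\LL_2^{1}}$. Since $\gamma<1$, the kernel $(t-s)^{-\gamma}$ is integrable, so BDG and \eqref{G1} bound this term by $C(1+\sup_s\|X^N(s)\|_{L_\omega^p\dot H^1})$.

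For the drift, drop the $L_\xi^\infty$ route.
\begin{itemize}
\item For $d=2,3$, bounding $\|X^N\|_{L_\xi^\infty}$ via $\dot H^{1+\gamma'}$ is circular, since those moments are what you are trying to prove.
\item In $d=3$ it actually fails. Any interpolation between $\dot H^1$ and $\dot H^{1+\gamma}$ that controls $L_\xi^\infty$ puts an exponent at least $\frac{1}{2\gamma}$ on $\|u\|_{1+\gamma}$. Hence $\|X^N\|_{L_\xi^\infty}^2$ carries a power at least $\frac1\gamma>1$ of $\|X^N\|_{1+\gamma}$, which cannot be absorbed.
\end{itemize}
Your fallback is the right one: measure $F$ in $H$. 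Then $\|S_N(t-s)\PP_NF(X^N)\|_{1+\gamma}\le C(t-s)^{-(1+\gamma)/2}e^{-c(t-s)}\|F(X^N)\|$, with $\|F(X^N)\|\le C(1+\|X^N\|_1^{q+1})$ by \eqref{f-grow} and \eqref{embedding}. The kernel is integrable because $(1+\gamma)/2<1$.

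Everything is then controlled by $\sup_s\ee\|X^N(s)\|_1^{p(q+1)}$, that is, by your $\gamma=0$ case with $p(q+1)$ in place of $p$. This is exactly where the hypothesis $\lambda_1>K_3+\frac{p(q+1)-1}{2}K_6$ and the power $\|X_0\|_{1+\gamma}^{p(q+1)}$ come from. It is also the route taken in the paper's discrete analogue, the proof of Proposition~\ref{prop-h1+}.
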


We need the following UIT estimates of \eqref{gtem} in the $\dot H^1$- and $\dot H^{1+\gamma}$-norms.

\begin{prop} \label{prop-h1+}
Let $p \in \nn_+$, $X_0 \in \dot H^{1+\gamma}$ with $\gamma \in [0,1)$ satisfying \eqref{gamma}, and Assumption~\ref{ap-f} and \eqref{G1} hold. Assume $\lambda_1> K_3+\frac{2p-1}{2}K_6$ for $\gamma=0$, and $\lambda_1> K_3+\frac{2p(q+1)-1}{2}K_6$ for $\gamma \in (0,1)$. There exist constants $C$ and $\tau_{\max} \in (0,1)$ such that for any $N \in \nn_+$ and $\tau \in (0,\tau_{\max}]$,
\begin{equation}\label{xnm-1+}
\sup_{m \in \nn_+}\ee\|X_m^N\|_{1+\gamma}^{2p}
\le
\begin{cases}
C(1+\|X_0\|_1^{2p}), & \gamma=0,\\
C(1+\|X_0\|_{1+\gamma}^{2p(q+1)}), & \gamma \in (0,1).
\end{cases}
\end{equation}
\end{prop}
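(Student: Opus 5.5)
The proof splits into two steps. First comes a discrete energy argument in $\dot H^1$ for the case $\gamma=0$. The $\dot H^{1+\gamma}$ bound then follows by a bootstrap through the mild form \eqref{full-sum}, which is what produces the exponent $q+1$.

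\emph{Step 1 ($\gamma=0$).} I would test \eqref{gtem} against $-A_N X^N_{j+1}$ in the $\dot H^1$-pairing, i.e.\ take $\<\cdot,\cdot\>_1$ with $X^N_{j+1}$. The identity $2\<a-b,a\>_1=\|a\|_1^2-\|b\|_1^2+\|a-b\|_1^2$ gives
\begin{align*}
\|X^N_{j+1}\|_1^2-\|X^N_j\|_1^2+\|X^N_{j+1}-X^N_j\|_1^2+2\tau\|X^N_{j+1}\|_2^2
=2\tau\<\PP_N F_\tau(X^N_j),X^N_{j+1}\>_1+2\<\PP_N G(X^N_j)\delta_jW,X^N_{j+1}\>_1 .
\end{align*}
I would then write $X^N_{j+1}=X^N_j+(X^N_{j+1}-X^N_j)$ in both terms on the right. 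In the drift term, the $X^N_j$ part is controlled by \eqref{dis-mon} as $\tau T_1+\tau(K_3+K_3^2\tau)\|X^N_j\|_1^2-\tau^2\|F_\tau(X^N_j)\|_1^2$. The increment part is absorbed via Young's inequality into $\tfrac12\|X^N_{j+1}-X^N_j\|_1^2+2\tau^2\|F_\tau\|_1^2$. The surplus $\tau^2\|F_\tau\|_1^2$ is harmless because of \eqref{tauFtau1}, which bounds it by $C\tau(1+\|X^N_j\|_1^2)$. This last point is the reason the taming is needed.

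The noise term splits into a martingale increment $2\<G(X^N_j)\delta_jW,X^N_j\>_1$, with zero conditional mean, plus a cross term that Young's inequality bounds by $\tfrac12\|X^N_{j+1}-X^N_j\|_1^2+2\|G(X^N_j)\delta_jW\|_1^2$. The latter has conditional expectation $2\tau\|G(X^N_j)\|_{\LL_2^1}^2\le 2\tau(K_5+K_6\|X^N_j\|_1^2)$ by \eqref{G1}. The constant $2$ here has to be sharpened to $1+\epsilon$ to meet the threshold $\frac{2p-1}{2}K_6$. I would do this by using $\|X^N_{j+1}\|_2^2\ge\lambda_1\|X^N_{j+1}\|_1^2$ from \eqref{poin} and carrying the dissipation on the implicit side. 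Concretely, I would write everything in the form $(1+2\lambda_1\tau)\|X^N_{j+1}\|_1^2\le(1+2\tau(K_3+\tfrac{1}{2}K_6+\epsilon))\|X^N_j\|_1^2+C\tau+M_j$, where $M_j$ is the martingale increment.

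For the $2p$-th moment I would raise this one-step inequality to the power $p$ (with $p\in\nn_+$, so the binomial expansion is finite). Taking conditional expectations, the cross terms that are linear in $M_j$ vanish. The quadratic term contributes $\binom p2\cdot 4\tau\|G\|_{\LL_2^1}^2\|X^N_j\|_1^{2(p-1)}$, which yields the factor $\frac{2p-1}{2}K_6$ in the effective rate. Terms of order $\tau^{3/2}$ and higher are absorbed for $\tau\le\tau_{\max}$. The result is a recursion $\ee\|X^N_{j+1}\|_1^{2p}\le(1-c\tau)\ee\|X^N_j\|_1^{2p}+C\tau$ with $c>0$ exactly when $\lambda_1>K_3+\frac{2p-1}{2}K_6$. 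A discrete Gronwall argument then gives the uniform bound $C(1+\|X_0\|_1^{2p})$. I expect this tracking of the sharp constant through the binomial expansion to be the main obstacle, together with controlling the higher moments of $\delta_jW$ in the terms with powers of $M_j$ of order three and above.

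\emph{Step 2 ($\gamma\in(0,1)$).} I would apply $(-A)^{(1+\gamma)/2}$ to \eqref{full-sum} and use the discrete smoothing bounds $\|(-A_N)^{\theta/2}S_{N,\tau}^{k}\|_\LL\le C(t_k)^{-\theta/2}e^{-ct_k}$ together with $\|S^k_{N,\tau}\|_\LL\le(1+\lambda_1\tau)^{-k}$. The initial term is bounded by $\|X_0\|_{1+\gamma}$. For the drift term, \eqref{F-bounds-0} gives $\|F_\tau(X^N_i)\|_1\le C(1+\|X^N_i\|_{L^\infty_\xi}^q)(1+\|X^N_i\|_1)$, and the discrete convolution with kernel $t^{-\gamma/2}e^{-ct}$ is summable uniformly in $j$. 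For the noise term, the discrete BDG inequality gives $\big(\tau\sum_i t_{j-i}^{-\gamma}e^{-2ct_{j-i}}\|G(X^N_i)\|^2_{\LL_2^1}\big)^{1/2}$, which by \eqref{G1} is controlled by Step 1 since $\gamma<1$.

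The remaining difficulty is $\|X^N_i\|_{L^\infty_\xi}$, since $\dot H^1$ does not embed into $L^\infty_\xi$ when $d\ge2$. I would handle it by the interpolation $\|u\|_{L^\infty_\xi}\le C\|u\|_1^{1-\theta}\|u\|_{1+\gamma}^{\theta}$ with a small $\theta$, followed by Young's inequality and a Gronwall-type absorption. Alternatively, I would first bound $\|X^N_i\|_{L^\infty_\xi}$ via the smoothing of the mild formula in an intermediate space $\dot H^{1+\gamma'}$ with $\gamma'$ slightly above the embedding threshold. Either route produces moments of order $2p(q+1)$ in $\dot H^1$. That explains both the exponent $\|X_0\|_{1+\gamma}^{2p(q+1)}$ and the condition $\lambda_1>K_3+\frac{2p(q+1)-1}{2}K_6$, which is Step 1 applied with $p(q+1)$ in place of $p$.
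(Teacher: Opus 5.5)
\textbf{Step 1.} This step reaches the paper's starting point, but the sharp constant comes out more simply than you suggest. Apply a single Young inequality, with constant one, to $2\<v,X^N_{j+1}-X^N_j\>_1$, where $v:=\tau\PP_NF_\tau(X^N_j)+\PP_NG(X^N_j)\delta_jW$ is the combined increment. Together with $\|X^N_{j+1}\|_2^2\ge\lambda_1\|X^N_{j+1}\|_1^2$, this gives exactly $(1+2\tau\lambda_1)\|X^N_{j+1}\|_1^2\le\|X^N_j+v\|_1^2$, which is the paper's first inequality. Splitting drift and noise with separate factors $\tfrac12$ is what costs you the factor $2$.

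A leftover $\tau^2\|F_\tau(X^N_j)\|_1^2$ would not be harmless. \eqref{tauFtau1} turns it into $C\tau\|X^N_j\|_1^2$ with a non-small $C$, which shifts the threshold. With correct factors it cancels exactly against the $-2\tau^2\|F_\tau(X^N_j)\|_1^2$ coming from $2\tau$ times \eqref{dis-mon}.

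Your binomial expansion with $O(\tau^{3/2})$ remainders does produce the rate $\frac{2p-1}{2}K_6$. The paper avoids that bookkeeping by applying It\^o's formula to $t\mapsto\|X^N_j+\tau\PP_NF_\tau(X^N_j)+\PP_NG(X^N_j)(W(t_j+t)-W(t_j))\|_1^{2p}$. This yields $\big(\|X^N_j+\tau F_\tau(X^N_j)\|_1^2+(2p-1)\tau\|G(X^N_j)\|_{\LL_2^1}^2\big)^p$ directly.

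\textbf{Step 2: the genuine gap.} Measuring $F_\tau(X^N_i)$ in $\dot H^1$ via \eqref{F-bounds-0} forces you to control $\|X^N_i\|_{L^\infty_\xi}$. Neither of your fixes closes with the stated exponent.

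In $d=3$ (so $q=2$ and $\gamma\in(1/2,1)$), the embedding $\dot H^{1+\theta\gamma}\hookrightarrow L^\infty_\xi$ needs $1+\theta\gamma>3/2$, i.e.\ $\theta>1/(2\gamma)>1/2$. Then $\|X^N_i\|_{L^\infty_\xi}^q$ carries $\|X^N_i\|_{1+\gamma}^{q\theta}$ with $q\theta>1$. The right-hand side is superlinear in the quantity being estimated and involves a moment of it of order larger than $2p$, so no Young/Gronwall absorption is possible.

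In $d=2$, absorption works for small $\theta$. However, Young's inequality then requires $\dot H^1$-moments of order $2p\frac{q+1-q\theta}{1-q\theta}>2p(q+1)$. So neither the exponent $2p(q+1)$ nor the threshold $\frac{2p(q+1)-1}{2}K_6$ is recovered.

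The bootstrap through $\dot H^{1+\gamma'}$ meets the same drift term one level down. It is circular unless that drift is estimated without derivatives, and once you do that the detour is unnecessary.

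\textbf{The missing idea.} Since $\gamma<1$, \eqref{smo-pn} with $\beta=\frac{1+\gamma}2\le1$ gives the integrable kernel $t_{m-i}^{-(1+\gamma)/2}e^{-ct_{m-i}}$. It therefore suffices to bound $F_\tau(X^N_i)$ in $L^2_\xi$. Because $|f_\tau|\le|f|$, \eqref{f-grow} and \eqref{embedding} give $\|F_\tau(u)\|\le C(1+\|u\|_{L_\xi^{2(q+1)}}^{q+1})\le C(1+\|u\|_1^{q+1})$. Only $\ee\|X^N_i\|_1^{2p(q+1)}$ then enters, which is Step 1 with $p(q+1)$ in place of $p$. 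This, not any $L^\infty_\xi$ bound, is the source of the exponent $2p(q+1)$ and of the condition $\lambda_1>K_3+\frac{2p(q+1)-1}{2}K_6$.
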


\begin{proof}
To show \eqref{xnm-1+} with $\gamma=0$, we use \eqref{gtem} and the Poinc\'are inequality \eqref{poin} to get
\begin{align*}
(1+2\tau\lambda_1)\|X_{j+1}^N\|_1^2
 \le \|X_j^N+\tau \PP_N F_\tau(X_j^N) + \PP_N G(X_j^N)\delta_jW\|_1^2.
\end{align*}
For any $p \in \nn_+$, taking the $p$-th power and the conditional expectation $\ee_j[\cdot]:=\ee[\cdot \mid \FFF(t_j)]$ on both sides of the above inequality, we have
\begin{align*}
(1+2\tau\lambda_1)^p \ee_j \|X_{j+1}^N\|_1^{2p} 
\le \ee_j \|X_j^N+\tau \PP_N F_\tau(X_j^N) + \PP_N G(X_j^N)\delta_jW\|_1^{2p}.
\end{align*} 
Set
$Z_j(t):=X_j^N+\tau \PP_N F_\tau(X_j^N) + \PP_N G(X_j^N)(W(t_j+t)-W(t_j))$ for $0\le t\le \tau$.
Then $Z_j(0)=X_j^N+\tau \PP_N F_\tau(X_j^N)$ and $Z_j(\tau)=X_j^N+\tau \PP_N F_\tau(X_j^N) + \PP_N G(X_j^N)\delta_jW$. 

Applying It\^o formula to $\|Z_j(t)\|_1^{2p}$, followed by taking $\ee_j$, and utilizing the estimate $\|G(X_j^N)^*z\|_{U_0}^2\le \|G(X_j^N)\|_{\LL_2^1}^2\|z\|_1^2$, $z \in \dot H^1$, and H\"older inequality, we infer
\begin{align*}
\frac{\dd}{\dd t}\ee_j\|Z_j(t)\|_1^{2p}
\le p(2p-1)\|G(X_j^N)\|_{\LL_2^1}^2 \ee_j\|Z_j(t)\|_1^{2p-2}.
\end{align*} 
By H\"older inequality, we have $\ee_j\|Z_j(t)\|_1^{2p-2}\le (\ee_j\|Z_j(t)\|_1^{2p})^{\frac{p-1}{p}}$.
Substituting this into the above estimate yields $\frac{\dd}{\dd t}(\ee_j \|Z_j(t)\|_1^{2p})^{1/p} \le (2p-1)\|G(X_j^N)\|_{\LL_2^1}^2$.
Integrating over $[0,\tau]$ yields
\begin{align}\label{e_j}
& \ee_j\|X_j^N+\tau \PP_N F_\tau(X_j^N)+\PP_N G(X_j^N)\delta_jW\|_1^{2p} \nonumber \\
& \le (\|X_j^N+\tau F_\tau(X_j^N)\|_1^2+(2p-1)\tau \|G(X_j^N)\|_{\LL_2^1}^2 )^p.
\end{align}
This, in combination with \eqref{dis-mon} and \eqref{G1}, gives
\begin{align*}
\ee_j\|X_{j+1}^N\|_1^{2p}
\le \frac{1}{(1+2\tau\lambda_1)^p}\sum_{r=0}^p \mathcal B_r(\tau)\|X_j^N\|_1^{2r},
\end{align*}
where $\mathcal B_r(\tau):=\binom{p}{r}	(1+(2K_3+(2p-1)K_6)\tau+2K_3^2\tau^2)^r((2T_1+(2p-1)K_5)\tau)^{p-r}$,  $r=0, 1, \cdots, p$.
In particular, $\mathcal B_p(\tau)=(1+(2K_3+(2p-1)K_6)\tau+2K_3^2\tau^2)^p$.
When $p=1$, the sum $\sum_{r=1}^{p-1}$ is empty and adopted as zero by convention, allowing us to directly obtain the final iterative inequality without further bounding.  
For $p \ge 2$ and $r \in \{1,\cdots,p-1\}$, we apply Young inequality to obtain
\begin{align*}
\mathcal B_r(\tau)\|X_j^N\|_1^{2r}\le \frac{\Phi(\tau)}{2(p-1)}\|X_j^N\|_1^{2p}
+\frac{p-r}{p}(\frac{2r(p-1)}{p\Phi(\tau)})^{\frac{r}{p-r}}\mathcal B_r(\tau)^{\frac{p}{p-r}},
\end{align*}
where $\Phi(\tau):= (1+2\tau\lambda_1)^p - \mathcal B_p(\tau)$.
Summing over $r$ then yields
\begin{align*}
\sum_{r=1}^{p-1}\mathcal B_r(\tau)\|X_j^N\|_1^{2r}\le \frac{\Phi(\tau)}{2}\|X_j^N\|_1^{2p}
+\sum_{r=1}^{p-1}\frac{p-r}{p}(\frac{2r(p-1)}{p\Phi(\tau)})^{\frac{r}{p-r}}\mathcal B_r(\tau)^{\frac{p}{p-r}}.
\end{align*}
Combining the above estimates, we have
\begin{align}\label{ej-y}
\ee_j\|X_{j+1}^N\|_1^{2p}
\le (1-\frac{\Phi(\tau)}{2(1+2\tau\lambda_1)^p})\|X_j^N\|_1^{2p}+C(\tau),
\end{align}
with $C(\tau)=(\mathcal B_0(\tau) + \sum_{r=1}^{p-1}\frac{p-r}{p}(\frac{2r(p-1)}{p\Phi(\tau)})^{\frac{r}{p-r}}\mathcal B_r(\tau)^{\frac{p}{p-r}})/(1+2\tau\lambda_1)^p$, with the convention that the sum vanishes if $p=1$.

Using the elementary inequality $x^p-y^p\ge p(x-y)y^{p-1}$, $p \ge 1$, we obtain 
\begin{align}\label{Phi}
\Phi(\tau)\ge p(2\tau\lambda_1-(2K_3+(2p-1)K_6)\tau-2K_3^2\tau^2)\ge c_1\tau,
\end{align}
for some positive constant $c_1$ and for any $\tau \in (0, \tau_1)$, with $\tau_1:=\min\{1, (\lambda_1-K_3-\frac{2p-1}{2}K_6) K_3^{-2}\} \in (0, 1]$ as $\lambda_1>K_3+\frac{2p-1}{2}K_6$.
Moreover, $\mathcal B_0(\tau)=((2T_1+(2p-1)K_5))^p\tau^p$ and $\mathcal B_r(\tau)\le C\tau^{p-r}$, $1\le r\le p$.
Hence, for $1\le r\le p-1$ and $p\ge2$, $\Phi(\tau)^{-\frac r{p-r}}\mathcal B_r(\tau)^{\frac p{p-r}} \le C\tau^{p-\frac r{p-r}}\le C\tau$.
Thus $C(\tau)\le c_2\tau$ for some $c_2>0$, and \eqref{ej-y} and \eqref{Phi} yield \eqref{xnm-1+} with $\gamma=0$.

For $\gamma \in (0, 1)$, \eqref{full-sum} and the Minkovski inequality give
\begin{align*}
\|X_m^N\|_{L_\omega^{2p}\dot H^{1+\gamma}}
& \le \|S_{N, \tau}^m X^N_0\|_{L_\omega^{2p}\dot H^{1+\gamma}} 
+ \tau \sum_{i=0}^{m-1} \|S_{N, \tau}^{m-i} \PP_N F_\tau(X^N_i)\|_{L_\omega^{2p}\dot H^{1+\gamma}} \\
&\quad+ \|\sum_{i=0}^{m-1} S_{N, \tau}^{m-i} \PP_N G(X^N_i) \delta_i W \|_{L_\omega^{2p}\dot H^{1+\gamma}}.
\end{align*} 
The estimate \eqref{smo-pn} gives
$\|S_{N, \tau}^m X^N_0\|_{L_\omega^{2p}\dot H^{1+\gamma}}\le C\|X_0\|_{L_\omega^{2p}\dot H^{1+\gamma}}.$
Using \eqref{smo-pn}, \eqref{f-tau}, \eqref{f-grow}, \eqref{embedding}, and \eqref{xnm-1+} with $\gamma=0$, we get
\begin{align*}
\tau \sum_{i=0}^{m-1} \|S_{N, \tau}^{m-i} \PP_N F_\tau(X^N_i)\|_{L_\omega^{2p}\dot H^{1+\gamma}}
& \le C\tau \sum_{i=0}^{m-1} t_{m-i}^{-\frac{1+\gamma}{2}}e^{-ct_{m-i}}
\|F_\tau(X_i^N)\|_{L_\omega^{2p}L_\xi^2} \\
& \le C (1+\|X_0\|_{1+\gamma}^{q+1} ).
\end{align*}
By discrete Burkholder--Davis--Gundy (BDG) inequality, \eqref{smo-pn}, \eqref{G1}, and \eqref{xnm-1+} with $\gamma=0$, we infer that
\begin{align*}
\|\sum_{i=0}^{m-1} S_{N, \tau}^{m-i} \PP_N G(X^N_i) \delta_i W\|_{L_\omega^{2p}\dot H^{1+\gamma}}^2
& \le C \tau\sum_{i=0}^{m-1} t_{m-i}^{-\gamma}e^{-ct_{m-i}}
\|G(X_i^N)\|_{L_\omega^{2p}\LL_2^1}^2 \\
& \le C (1+\|X_0\|_{1+\gamma} )^2.
\end{align*} 
Combining the above three estimates, we conclude \eqref{xnm-1+}.
\end{proof}

Finally, we establish the following H\"older regularity estimate for the continuous interpolation \eqref{xhat} of the GTEM \eqref{gtem}.

\begin{lm}\label{lm-xhat}
Let $p \in \nn_+$, $X_0 \in \dot H^{1+\gamma}$ with $\gamma \in [0,1)$ satisfying \eqref{gamma}, and Assumption~\ref{ap-f} and \eqref{G1} hold. 
Assume $\lambda_1 > K_3+\frac{2p-1}{2} K_6$ for $\gamma=0$, and $\lambda_1 > K_3+\frac{2p(q+1)-1}{2} K_6$ for $\gamma \in (0,1)$.
Then there exist constants $C$ and $\tau_{\max} \in (0,1)$ such that for any $N \in \nn_+$ and $\tau \in (0,\tau_{\max}]$,
\begin{align}
&\sup_{m \in \nn} \ee \sup_{t \in [t_m,t_{m+1}]}\|\hat X^N(t)\|_{1+\gamma}^{2p}
 \le \begin{cases} 
 C(1+\|X_0\|_1^{2p}), & \gamma=0,\\
 C(1+\|X_0\|_{1+\gamma}^{2p(q+1)}), & \gamma \in (0,1).
\end{cases} \label{xhat-est}
\end{align}
Moreover, assume that either $\gamma=\nu=0$ and $\lambda_1>K_3+\frac{p(q+1)-1}{2}K_6$, or that $\gamma \in (0,1)$ satisfies \eqref{gamma}, $\nu \in [0,\gamma]$, and $\lambda_1>K_3+\frac{p(q+1)^2-1}{2}K_6$. Then
\begin{align}
\sup_{m \in \nn} \sup_{t_m \le s<t\le t_{m+1}}
\frac{\|\hat X^N(t)-\hat X^N(s)\|_{L_\omega^p\dot H^\nu}} {(t-s)^{1/2}}
\le \begin{cases} 
C(1+\|X_0\|_1^{q+1}), & \gamma=0,\\
C (1+\|X_0\|_{1+\gamma}^{(q+1)^2}), & \gamma \in (0,1).
\end{cases}\label{holder}
\end{align}
\end{lm}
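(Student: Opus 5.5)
The plan is to work directly with the interpolation \eqref{xhat} on a single step. Integrating over $[t_m,t]$ gives, for $t\in[t_m,t_{m+1}]$,
\begin{align*}
\hat X^N(t)=X^N_m+(t-t_m)\big[A_NS_{N,\tau}X^N_m+S_{N,\tau}\PP_NF_\tau(X^N_m)\big]+S_{N,\tau}\PP_NG(X^N_m)(W(t)-W(t_m)).
\end{align*}
Both \eqref{xhat-est} and \eqref{holder} will be read off from this representation, combined with the uniform bounds of Proposition~\ref{prop-h1+} and the smoothing properties of $S_{N,\tau}$ from the appendix (the estimate \eqref{smo-pn}).

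For \eqref{xhat-est}, I bound the three terms separately in $\dot H^{1+\gamma}$.
\begin{itemize}
\item The first term is handled by Proposition~\ref{prop-h1+} directly.
\item For the linear part, $(t-t_m)\|A_NS_{N,\tau}X^N_m\|_{1+\gamma}\le \tau\|A_NS_{N,\tau}\|_{\LL}\|X^N_m\|_{1+\gamma}\le \|X^N_m\|_{1+\gamma}$, since $\tau(-A_N)(\mathrm{Id}-\tau A_N)^{-1}$ has norm at most $1$.
\item For the drift, I use $\tau\|S_{N,\tau}\|_{\LL(\dot H^{1},\dot H^{1+\gamma})}\le C\tau^{1-\gamma/2}$ and $\tau^{1/2}\|F_\tau(X^N_m)\|_1\le C(1+\|X^N_m\|_1)$ from \eqref{tauFtau1}. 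This gives a bound $C\tau^{(1-\gamma)/2}(1+\|X^N_m\|_1)$.
\item For the stochastic term, I apply the BDG inequality for the supremum over $t$ together with \eqref{G1}, which gives $C\tau^{1/2}\tau^{-\gamma/2}(1+\|X^N_m\|_1)$. For $\gamma>0$ it is better to route through \eqref{G2} and the $\dot H^{1+\gamma}$ moments; alternatively the power $\tau^{(1-\gamma)/2}$ is simply harmless.
\end{itemize}
Taking $2p$-th moments and invoking \eqref{xnm-1+} gives the claim under the stated conditions on $\lambda_1$.

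For \eqref{holder}, subtract the representations at $s$ and $t$:
\begin{align*}
\hat X^N(t)-\hat X^N(s)=(t-s)\big[A_NS_{N,\tau}X^N_m+S_{N,\tau}\PP_NF_\tau(X^N_m)\big]+S_{N,\tau}\PP_NG(X^N_m)(W(t)-W(s)).
\end{align*}
The noise increment is bounded in $L^p_\omega\dot H^\nu$ by $C(t-s)^{1/2}\|G(X^N_m)\|_{L^p_\omega\LL_2^\nu}$. Since $\nu\le\gamma<1$, we have $\LL_2^1\hookrightarrow\LL_2^\nu$, so \eqref{G1} and Proposition~\ref{prop-h1+} control this factor.

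For the linear part, interpolate: $(t-s)\|A_NS_{N,\tau}X^N_m\|_\nu\le (t-s)^{1/2}\tau^{1/2}\|(-A_N)S_{N,\tau}\|_{\LL(\dot H^{1+\nu},\dot H^{\nu})}\|X^N_m\|_{1+\nu}$. Here $\|(-A_N)^{1/2}S_{N,\tau}\|_{\LL}\le C\tau^{-1/2}$, which gives $(t-s)^{1/2}\|X^N_m\|_{1+\gamma}$.

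The drift term is where the higher powers of $\|X_0\|$ arise. I write $(t-s)\|S_{N,\tau}\PP_NF_\tau(X^N_m)\|_\nu\le (t-s)^{1/2}\tau^{1/2}\|F_\tau(X^N_m)\|_1$. Then I use \eqref{F-bounds-0} with \eqref{gamma}, giving $\|F_\tau(u)\|_1\le C(1+\|u\|_{1+\gamma}^q)(1+\|u\|_1)$. (For $\gamma=\nu=0$ in $d=1$, use $\dot H^1\hookrightarrow L^\infty_\xi$ instead.) Hölder's inequality in $\omega$ then requires $\ee\|X^N_m\|_{1+\gamma}^{p(q+1)}$. By \eqref{xnm-1+} this is at most $C(1+\|X_0\|_{1+\gamma}^{p(q+1)^2})$, which produces the exponent $(q+1)^2$ and the stated threshold $\lambda_1>K_3+\frac{p(q+1)^2-1}{2}K_6$. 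In the case $\gamma=0$ it produces the exponent $q+1$ and the threshold $\lambda_1>K_3+\frac{p(q+1)-1}{2}K_6$.

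The main obstacle I expect is exactly this drift term in the case $\gamma=0$, $d\ge2$, where $\dot H^1\not\hookrightarrow L^\infty_\xi$. There I would instead use the taming bound \eqref{tauFtau1}, $\tau^{1/2}\|F_\tau(u)\|_1\le C(1+\|u\|_1)$, with $\nu=0$. This needs only first-order moments, and the prefactor $(t-s)\tau^{-1/2}\le (t-s)^{1/2}$ is acceptable, so the bound still holds, if anything with a smaller power of $\|X_0\|$.
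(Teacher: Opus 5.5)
Your proposal is correct and follows essentially the paper's proof: the frozen-coefficient one-step representation of $\hat X^N$, termwise bounds using the smoothing of $S_{N,\tau}$, the BDG inequality with \eqref{G1}, \eqref{F-bounds-0} for the drift in \eqref{holder}, and Proposition~\ref{prop-h1+} with moment order $p(q+1)$. The differences are minor. For \eqref{xhat-est} with $\gamma>0$ you use the taming bound \eqref{tauFtau1}, whereas the paper uses it only for $\gamma=0$ and otherwise the growth bound $\|F_\tau(u)\|\le C(1+\|u\|_1^{q+1})$, which costs higher $\dot H^1$ moments. The case $\gamma=0$, $d\ge2$ that you flag as the main obstacle is already excluded by the standing hypothesis that $\gamma$ satisfies \eqref{gamma}, although your fix would work there anyway.
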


\begin{proof}
Let $m \in \nn$.
For any $t \in [t_m,t_{m+1}]$, by \eqref{xhat}, we get
\begin{align*}
\|\hat X^N(t)\|_{1+\gamma}
& \le C \|X^N_m\|_{1+\gamma} + \tau\|S_{N,\tau}\PP_NF_\tau(X_m^N)\|_{1+\gamma} \\
&\quad+ \Big\| \int_{t_m}^t S_{N,\tau}\PP_NG(X^N_m)\dd W(s)\Big\|_{1+\gamma}.
\end{align*}
After taking the supremum over $t \in [t_m, t_{m+1}]$ and taking the $L^{2p}_\omega$-norm, we apply BDG inequality, \eqref{f-grow}, \eqref{f-tau}, \eqref{embedding}, and \eqref{G1} to obtain
\begin{align*}
& \ee  \sup_{t \in [t_m, t_{m+1}]} \|\hat X^N(t)\|_{1+\gamma}^{2p} 
\le C \ee\|X^N_m\|_{1+\gamma}^{2p} + C \tau^{(1-\gamma)p} ( 1 + \ee\|X^N_m\|_1^{2p(q+1)} ).
\end{align*} 
Hence, the first estimate \eqref{xhat-est} with $\gamma \in (0,1)$ follows from \eqref{xnm-1+}. 
Similarly, by \eqref{xhat}, BDG inequality, \eqref{tauFtau1}, and \eqref{G1},
\begin{align*}
\ee\sup_{t \in [t_m,t_{m+1}]}\|\hat X^N(t)\|_1^{2p}
&\le C\ee\|X_m^N\|_1^{2p}
+C\tau^p(1+\ee\|X_m^N\|_1^{2p}),
\end{align*}
which, together with \eqref{xnm-1+} with $\gamma=0$,
proves \eqref{xhat-est} with $\gamma=0$.
Next, by \eqref{xhat}, Minkowski inequality, \eqref{smo-pn}, \eqref{F-bounds-0}, the BDG inequality, and \eqref{G1}, we obtain, for $t_m\le s<t\le t_{m+1}$,
\begin{align*}
& \|\hat X^N(t)-\hat X^N(s)\|_{L_\omega^p\dot H^\nu} \\
& \le \|(t-s)A_NS_{N,\tau}X_m^N\|_{L_\omega^p\dot H^\nu} +\|(t-s)S_{N,\tau}\PP_NF_\tau(X_m^N)\|_{L_\omega^p\dot H^\nu} \\
&\quad +\|S_{N,\tau}\PP_NG(X_m^N)(W(t)-W(s))\|_{L_\omega^p\dot H^\nu} \\
& \le C(t-s)\tau^{-\frac{1+\nu-\gamma}{2}}\|X_m^N\|_{L_\omega^p\dot H^{1+\gamma}} +C(t-s)(1+\|X_m^N\|_{L_\omega^{p(q+1)}\dot H^{1+\gamma}}^{q+1}) \\
&\quad +C(t-s)^{\frac12}(1+\|X_m^N\|_{L_\omega^p\dot H^1}) \\
& \le C(t-s)^{\frac12} (1+\|X_m^N\|_{L_\omega^{p(q+1)}\dot H^{1+\gamma}}^{q+1}).
\end{align*}
Applying \eqref{xnm-1+} with moment order $p(q+1)$, we conclude \eqref{holder}.

\end{proof}

\subsection{Regularity of BKE}

In this part, we aim to establish UIT regularity results for the BKE associated with Eq. \eqref{spe}. 

For any test function $\varphi \in \CC_b^3(H)$ and $M \gg N$, we define
\begin{align} \label{def-u}
    U^M(t,x) := \ee \varphi(X^M(t,x)), \quad t \ge 0, \ x \in V_M.
\end{align}
It\^o formula yields that $U^M$ satisfies the following BKE:
\begin{align} \label{kol}
    \partial_t U^M(t,x) &= \< A_M x + \PP_M F(x), D U^M(t,x) \> \nonumber \\
    &\quad + \frac12 \operatorname{Tr} [ D^2U^M(t,x) (\PP_M G(x) Q^\frac12)(\PP_M G(x) Q^\frac12)^* ], 
\end{align}
$(t, x) \in \rr_+ \times V_M$, with $U^M(0,\cdot) = \varphi(\cdot)$. 
For notational convenience, sometimes we write $U_t^M(\cdot):=U^M(t,\cdot)$ for $t \in \rr_+$ whenever no confusion arises.

For the estimates of higher-order derivatives of the solution to the BKE \eqref{kol}, we fix $1\le p_2 < p_\Theta<p_1 < 2p <p^*$ with  $R:=2p_2(8q-2)=2p_\Theta(7q-2)=2p_1(4q-1)=4pq$.
Accordingly, for $t > 0$, $x \in V_M$, and $j \in \{1,2,\Theta,3\}$, define
\begin{align*}
(r_1,r_2,r_\Theta,r_3):=(q,4q-1,7q-2,8q-2),\quad 
\Psi_t^{(j)}(x):=1+\sup_{0\le s\le t} \|X^M(s,x)\|_{L_\omega^R L_\xi^ \infty}^{r_j}.
\end{align*}
For later use, set $\kappa_{d,\varrho}:=\max\{\frac d4-\varrho,0\}$, $\kappa_{d,\chi}:=\max\{\frac d2-1-\frac{\chi}{2},0\}$, and $\kappa_{d,\varrho,\chi}:=\max\{\kappa_{d,\varrho},\kappa_{d,\chi}\}$.

\begin{lm}\label{lm-DX}
Let $q\ge2$, $1\le p_2<p_\Theta<p_1<2p<p^*$, and Assumptions \ref{ap-f}--\ref{ap-G} hold with $\lambda_1>K_3+\frac{4p-1}{2}K_4^2$.
Let $\varrho \in (1/4,1/2)$, $\gamma \in [0,1)$ satisfy \eqref{gamma}, and let $\chi \in [0,\gamma]$  satisfy $\chi+2\varrho>d/2$. 
For $\alpha,\vartheta,\vartheta_1,\vartheta_2,\varepsilon_1,\varepsilon_2 \in [0,1/2)$ with $\vartheta_1+\vartheta_2+\kappa_{d,\varrho}<\frac12$ and $\varepsilon_1+\varepsilon_2 <\min\{\frac12-\kappa_{d,\varrho,\chi},\varrho-\kappa_{d,\chi}\}$, and for any sufficiently small $\delta>0$, there exist constants $C>0$ and $c>0$ such that for all $x,h,k,l \in V_M$, and $t>0$,
\begin{align}
\|D X^M(t,x)h\|_{L_\omega^{4p}} &\le C(1+t^{-\alpha})e^{-ct}\Psi_t^{(1)}(x)\|h\|_{-2\alpha}, \label{DX-est}\\
\|D^2X^M(t,x)(h,k)\|_{L_\omega^{2p_1}} &\le C(1+t^{-\vartheta_1-\vartheta_2-\kappa_{d,\varrho}-\delta}) e^{-ct}\Psi_t^{(2)}(x) \|h\|_{-2\vartheta_1}\|k\|_{-2\vartheta_2},\label{D2X-est}\\
\|D^2X^M(t,x)(h,k)\|_{L_\omega^{2p_1}} &\le C(1+t^{-\vartheta})e^{-ct}\Psi_t^{(2)}(x) \|h\|_{-2\vartheta}\|k\|_{\chi}, \label{D2X-mix-est}\\
\|D^3X^M(t,x)(h,k,l)\|_{L_\omega^{2p_2}} &\le C(1+t^{-\varepsilon_1-\varepsilon_2-\kappa_{d,\varrho,\chi}-\delta}) e^{-ct}\Psi_t^{(3)}(x) \|h\|_{-2\varepsilon_1}\|k\|_{-2\varepsilon_2}\|l\|_{\chi}. \label{D3X-mix-est}
\end{align}
\end{lm}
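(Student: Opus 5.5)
Write $\eta^h(t):=DX^M(t,x)h$, $\zeta^{h,k}(t):=D^2X^M(t,x)(h,k)$, $\xi^{h,k,l}(t):=D^3X^M(t,x)(h,k,l)$. These satisfy the first three variational equations obtained by differentiating \eqref{spe}:
\begin{align*}
\dd\eta^h&=[A_M\eta^h+\PP_MDF(X^M)\eta^h]\dd t+\PP_MDG(X^M)\eta^h\dd W,\quad \eta^h(0)=h,\\
\dd\zeta^{h,k}&=[A_M\zeta^{h,k}+\PP_MDF(X^M)\zeta^{h,k}+\PP_MD^2F(X^M)(\eta^h,\eta^k)]\dd t\\
&\quad+[\PP_MDG(X^M)\zeta^{h,k}+\PP_MD^2G(X^M)(\eta^h,\eta^k)]\dd W,\quad \zeta^{h,k}(0)=0,
\end{align*}
and similarly for $\xi^{h,k,l}$. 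The latter has forcing terms $D^3F(X^M)(\eta^h,\eta^k,\eta^l)$ and the three symmetric terms $D^2F(X^M)(\eta,\zeta)$ in the drift, and the analogous $D^2G$, $D^3G$ terms in the noise.

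The plan proceeds in two stages.

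\textbf{Stage 1: estimates for bounded data.} First take $h\in H$, and bound $\|\eta^h(t)\|$ using It\^o's formula for $\|\eta^h\|^{4p}$ together with the coupled monotonicity \eqref{D-coup}; this requires $p^*>2p$ and $\lambda_1>K_3+\frac{4p-1}{2}K_4^2$. The outcome is the exponential decay $\|\eta^h(t)\|_{L^{4p}_\omega}\le Ce^{-ct}\|h\|$.

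\textbf{Stage 2: negative-norm data via a mild splitting.} For $h\in\dot H^{-2\alpha}$, write $\eta^h(t)=S_M(t)h+r(t)$. The remainder $r$ solves the same linear equation with forcing $\PP_MDF(X^M)S_M(t)h$ in the drift and $\PP_MDG(X^M)S_M(t)h$ in the noise.
\begin{itemize}
\item Bound $\|DF(X^M)S_M(t)h\|$ by \eqref{F-bounds-1}, giving the factor $(1+\|X^M\|_{L^\infty_\xi}^q)\,t^{-\alpha}e^{-ct}\|h\|_{-2\alpha}$.
\item Bound the noise forcing by \eqref{DG}.
\item Energy estimates on $r$ with \eqref{D-coup}, Gronwall, and exponential convolution bounds $\int_0^te^{-c(t-s)}s^{-\alpha}\dd s\le C(1+t^{-\alpha})e^{-ct}$ then yield \eqref{DX-est}.
\item H\"older's inequality in $\omega$ produces $\Psi^{(1)}_t$. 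This is why $L^R_\omega L^\infty_\xi$ moments with $R=4pq$ appear.
\end{itemize}

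\textbf{Second and third derivatives.} For $\zeta$, use the same energy/mild argument, now treating $D^2F(X^M)(\eta^h,\eta^k)$ as a forcing measured in $\dot H^{-2\varrho}$ via \eqref{D2F-d}. This leaves a smoothing factor $(t-s)^{-\varrho}$ when paired through the semigroup, or equivalently one applies It\^o's formula to $\|(-A)^{-\varrho}\cdots\|$-type quantities, or mildly estimates $\int_0^tS_M(t-s)\cdots$.
\begin{itemize}
\item For \eqref{D2X-est}, take $\mu_1,\mu_2$ slightly larger than allowed by the regularity of $\eta^h,\eta^k$. The parabolic smoothing $\|\eta^h(s)\|_{\mu}\lesssim s^{-\vartheta_1-\mu/2}$ is obtained from a variant of \eqref{DX-est} in higher norms. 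The condition $\mu_1+\mu_2+2\varrho>d/2$ forces a loss $\kappa_{d,\varrho}+\delta$, and the singularity $s^{-\vartheta_1-\vartheta_2-\kappa_{d,\varrho}-\delta}$ remains integrable because $\vartheta_1+\vartheta_2+\kappa_{d,\varrho}<1/2$.
\item The $D^2G$ noise forcing is handled by \eqref{D2G} in $\LL_2^0$, using the BDG inequality and Stage 2 bounds.
\item For \eqref{D2X-mix-est}, place $\mu_2=\chi$ on $\eta^k$, whose $\dot H^\chi$-norm is bounded without singularity. Since $\chi+2\varrho>d/2$, no extra loss occurs.
\end{itemize}
The moment exponents decrease, $2p\to2p_1\to2p_2$, to absorb the products of $\Psi$ factors via H\"older's inequality; this accounts for the powers $r_2=4q-1$ and $r_3=8q-2$.

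\textbf{Main obstacle.} I expect the hardest part to be the third derivative \eqref{D3X-mix-est}. Its forcing terms $D^2F(\eta,\zeta)$ need $\zeta$ in a positive Sobolev norm $\dot H^{\mu}$, but the established bounds on $\zeta$ are only in $H$. I would first prove an intermediate estimate for $\|\zeta^{h,l}(t)\|_{\mu}$ with mixed data, $l\in\dot H^\chi$. This is the likely role of the auxiliary exponent $p_\Theta$ and of $r_\Theta=7q-2$. The estimate would come from the mild form, using $\|S_M(t-s)v\|_\mu\lesssim(t-s)^{-(\mu+2\varrho)/2}\|v\|_{-2\varrho}$ and the stochastic convolution bound with \eqref{D2G}. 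The exponents must be balanced so that $\mu+2\varrho<2$, the combination $\nu_1+\nu_2+\nu_3+\min\{2\varrho,d/2\}>d$ in \eqref{D3F-d} holds with $\nu_3=\chi$, and all time singularities add to less than one. This is where $\kappa_{d,\chi}$ enters, yielding the constraint $\varepsilon_1+\varepsilon_2<\min\{\frac12-\kappa_{d,\varrho,\chi},\varrho-\kappa_{d,\chi}\}$. The final step is a Gronwall/energy argument for $\xi$ with \eqref{D-coup}, as before, to preserve the $e^{-ct}$ decay.
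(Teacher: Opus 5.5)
Your treatment of $\eta^h$ and $\zeta^{h,k}$ matches the paper: splitting off $S_M(t)h$, the $L^{4p}_\omega$-energy estimate with \eqref{D-coup}, the mild-form bootstrap for the spatial regularity of $\eta^h$, and $\dot H^{-2\varrho}$-duality for the $D^2F$ forcing. The genuine gap is in the third variation $\psi^{h,k,l}=D^3X^M(t,x)(h,k,l)$ (your $\xi^{h,k,l}$), and it is not where you locate it.

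In the energy argument, a drift forcing $a$ is controlled through $\langle a,\psi\rangle\le\|a\|_{-2\varrho}\|\psi\|_1$, with $\|\psi\|_1^2$ absorbed into the dissipation; this requires $2\varrho\le1$. One therefore only gets $\|\psi(t)\|^2_{L^{2p_2}_\omega}\lesssim\int_0^te^{-c(t-s)}\|a(s)\|^2_{L^{2p_2}_\omega\dot H^{-2\varrho}}\dd s$, so every forcing must have a \emph{square}-integrable time singularity. Consider the forcing $D^3F(X^M)(\eta^h,\eta^k,\eta^l)$ with the $\dot H^\chi$-direction on $\eta^l$. Then \eqref{D3F-d} forces $\nu_1+\nu_2>d-\chi-\min\{2\varrho,d/2\}$, and by \eqref{DX-spatial} the singularity is $s^{-\varepsilon_1-\varepsilon_2-(\nu_1+\nu_2)/2}$. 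In $d=3$ the exponent exceeds $(2-\chi)/2>1/2$ for every admissible choice. This holds even for $\varepsilon_1=\varepsilon_2=0$, which is the case used in Section~\ref{sec4}. Hence your concluding ``Gronwall/energy argument as before'' diverges at $s=0$. In $d=1,2$ it closes only on a strictly smaller range than stated; for example, in $d=1$ it gives $\varepsilon_1+\varepsilon_2<1/4$ instead of $<\varrho$. Writing all of $\psi$ in mild form does not help either: the feedback $DF(X^M)\psi$ has unbounded random coefficients, so Gronwall would lose the uniform $e^{-ct}$ decay.

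The missing idea is the paper's hybrid splitting. Isolate $\Theta^{h,k,l}(t)=\int_0^tS_M(t-s)\PP_MD^3F(X^M(s))(\eta^h(s),\eta^k(s),\eta^l(s))\dd s$, which has no feedback term. Bound it using the smoothing $(t-s)^{-\varrho}e^{-c(t-s)}$ of $S_M$ from $\dot H^{-2\varrho}$ to $H$. This needs only an \emph{integrable} singularity (exponent $<1$), which is exactly the condition $\varepsilon_1+\varepsilon_2+\kappa_{d,\chi}+\delta<\varrho$. It yields a factor $t^{-\varepsilon_1-\varepsilon_2-\kappa_{d,\chi}-\delta}$ whose exponent is below $1/2$. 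The energy argument is then run only on $\bar\psi=\psi-\Theta$:
\begin{itemize}
\item its drift forcings are the three $D^2F(\zeta,\eta)$ terms plus $DF(X^M)\Theta$;
\item its noise forcings are the $D^2G$ and $D^3G$ terms plus $DG(X^M)\Theta$;
\item all of these are now square integrable.
\end{itemize}
This is the role of $p_\Theta$ and $r_\Theta=7q-2$, which arise as $q-2$ from $D^3F$ plus $2q$ from each of three $\eta$'s via \eqref{DX-spatial}--\eqref{DX-pos}. Likewise $r_3=8q-2=q+r_\Theta$ comes from $DF(X^M)\Theta$.

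Conversely, the obstacle you single out is not one: $\zeta$ is never needed in a positive Sobolev norm. One puts $\mu_1=0$ on $\zeta$ in \eqref{D2F-d} and all spatial regularity on $\eta$.
\begin{itemize}
\item For $D^2F(\zeta^{h,k},\eta^l)$, use $(0,\chi)$ together with \eqref{D2X-est} and \eqref{DX-pos}.
\item For $D^2F(\zeta^{k,l},\eta^h)$, bound $\zeta^{k,l}$ in $H$ by \eqref{D2X-mix-est} with $\vartheta=\varepsilon_2$, and $\eta^h$ in $\dot H^\mu$ with $\mu/2<\kappa_{d,\varrho}+\delta$ by \eqref{DX-spatial}.
\end{itemize}
This is precisely why the mixed estimate \eqref{D2X-mix-est} is proved. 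Your $\dot H^\mu$-bound on $\zeta$ is likely attainable, but it is extra work and leaves the $D^3F$ term untouched.
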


\begin{proof}

Let $t>0$ and $x, h,k,l \in V_M$. 
In what follows, we denote
\begin{align*}
    \eta^{h}(t, x) := D X^M(t,x) h, 
   &  \quad \widetilde{\eta}^h(t,x):=\eta^h(t,x)-e^{tA}h, \\
    \zeta^{h,k}(t, x) := D^2 X^M(t, x) (h, k), 
    & \quad \psi^{h,k,l}(t, x) := D^3 X^M(t, x) (h, k,l).
\end{align*} 
For notational brevity, we suppress the $x$-dependence of $X^M$, $\eta$, $\zeta$, and $\psi$ whenever no confusion arises. 
For brevity, define $\LL_s y:=A_My+\PP_MDF(X^M(s))y$, $a_h(s):=\PP_MDF(X^M(s))e^{sA}h$, and $b_h(s):=\PP_MDG(X^M(s))e^{sA}h$ for $0<s\le t$.
Then for the terms $\eta^{h}$ and $\widetilde{\eta}^h$, we have $\eta^h(0)=h$, $\widetilde{\eta}^h(0)=0$, and for $0<s\le t$, 
\begin{align*}
\dd \widetilde{\eta}^h(s)
= [\LL_s\widetilde{\eta}^h(s)+a_h(s)]\dd s +[\PP_MDG(X^M(s))\widetilde{\eta}^h(s)+b_h(s)]\dd W(s), \quad \widetilde{\eta}^h(0)=0.
\end{align*}
Applying It\^o formula to $\|\widetilde{\eta}^h(s)\|^{4p}$, taking expectations, and using Young inequality, we obtain
\begin{align*}
\frac{1}{4p}\frac{\dd}{\dd s}\ee\|\widetilde{\eta}^h(s)\|^{4p} 
&\le \ee[\|\widetilde{\eta}^h(s)\|^{4p-2}\< \LL_s\widetilde{\eta}^h(s),\widetilde{\eta}^h(s)\>] +\ee[\|\widetilde{\eta}^h(s)\|^{4p-1}\|a_h(s)\|]  \\
&\quad +\frac{4p-1}{2}(1+\varepsilon) \ee[\|\widetilde{\eta}^h(s)\|^{4p-2}\|\PP_MDG(X^M(s))\widetilde{\eta}^h(s)\|_{\LL_2^0}^2] \\
&\quad +C_\varepsilon\ee[\|\widetilde{\eta}^h(s)\|^{4p-2}\|b_h(s)\|_{\LL_2^0}^2].
\end{align*}
Choosing $\varepsilon>0$ sufficiently small and using \eqref{D-coup}, we obtain
\begin{align*}
\frac{\dd}{\dd s}\ee\|\widetilde{\eta}^h(s)\|^{4p}
&\le -K \ee\|\widetilde{\eta}^h(s)\|^{4p} +C\ee[\|\widetilde{\eta}^h(s)\|^{4p-1}\|a_h(s)\|]  \\
&\quad +C\ee[\|\widetilde{\eta}^h(s)\|^{4p-2} \|b_h(s)\|_{\LL_2^0}^2].
\end{align*}
Setting $z_1(s):=\ee\|\widetilde{\eta}^h(s)\|^{4p}$, H\"older inequality gives
\begin{align*}
z_1'(s)
\le -K z_1(s) +Cz_1(s)^{1-\frac{1}{4p}}\|a_h(s)\|_{L_\omega^{4p}} +Cz_1(s)^{1-\frac{1}{2p}}\|b_h(s)\|_{L_\omega^{4p}\LL_2^0}^2.
\end{align*}
For $\iota>0$, define $R_\iota(s):=(z_1(s)+\iota)^{1/(2p)}$.  
Using
\begin{align*}
(z_1+\iota)^{\frac1{2p}-1}z_1 \ge R_\iota-\iota^{1/(2p)},~
(z_1+\iota)^{\frac1{2p}-1}z_1^{1-\frac1{4p}} \le R_\iota^{1/2},~
(z_1+\iota)^{\frac1{2p}-1}z_1^{1-\frac1{2p}} \le 1,
\end{align*}
and Young inequality, we obtain for some $c>0$,
\begin{align*}
R_\iota'(s)
\le -cR_\iota(s) +C(\|a_h(s)\|_{L_\omega^{4p}}^2 +\|b_h(s)\|_{L_\omega^{4p}\LL_2^0}^2 +\iota^{1/(2p)}).
\end{align*}
Gronwall inequality and the limit $\iota\downarrow0$ yield
\begin{align*}
\|\widetilde{\eta}^h(t)\|_{L_\omega^{4p}}^2
\le C \int_0^t e^{-c(t-s)} (\|a_h(s)\|_{L_\omega^{4p}}^2 +\|b_h(s)\|_{L_\omega^{4p}\LL_2^0}^2)\dd s.
\end{align*}
By \eqref{F-bounds-1}, \eqref{DG}, and \eqref{smo},
\begin{align*}
\|a_h(s)\|_{L_\omega^{4p}}^2 +\|b_h(s)\|_{L_\omega^{4p}\LL_2^0}^2
\le C|\Psi_t^{(1)}(x)|^2s^{-2\alpha}e^{-2\lambda_1s} \|h\|_{-2\alpha}^2.
\end{align*}
Since $\alpha<1/2$, the last two estimates imply
\begin{align*}
\|\widetilde{\eta}^h(t)\|_{L_\omega^{4p}}^2 
&\le Ce^{-ct}|\Psi_t^{(1)}(x)|^2\|h\|_{-2\alpha}^2.
\end{align*}
Together with $\eta^h(t,x)=\widetilde\eta^h(t,x)+e^{tA}h$ and \eqref{smo}, this proves \eqref{DX-est}. 

Let $\beta,\sigma\ge0$ satisfy $\beta+\sigma/2<1/2$. From the mild form
\begin{align*}
\eta^h(s,x)
&=S_M(s)h+ \int_0^s S_M(s-r)\PP_MDF(X^M(r))\eta^h(r)\dd r\\
&\quad+ \int_0^s S_M(s-r)\PP_MDG(X^M(r))\eta^h(r)\dd W(r),
\end{align*}
\eqref{smo}, \eqref{F-bounds-1}, \eqref{DG}, H\"older and BDG inequalities, and \eqref{DX-est}, we obtain  
\begin{align}
\|\eta^h(s,x)\|_{L_\omega^{2p}\dot H^\sigma}
\le C(1+s^{-\beta-\sigma/2})e^{-cs} |\Psi_t^{(1)}(x)|^2\|h\|_{-2\beta},
\quad 0<s\le t.
\label{DX-spatial}
\end{align}
The same mild-form argument also gives, for $0\le\sigma<1$ and $h \in \dot H^\sigma\cap V_M$, 
\begin{align}
\|\eta^h(s,x)\|_{L_\omega^{2p}\dot H^\sigma}
\le C e^{-cs}|\Psi_t^{(1)}(x)|^2\|h\|_\sigma,
\quad 0<s\le t.
\label{DX-pos}
\end{align}

We next turn to the second variation. We have $\zeta^{h,k}(0)=0$ and, for $0<s\le t$,
\begin{align*}
\dd \zeta^{h,k}(s)
&=[\LL_s\zeta^{h,k}(s)+\PP_Ma(s)]\dd s\\
&\quad+[\PP_MDG(X^M(s))\zeta^{h,k}(s)+\PP_Mb(s)]\dd W(s),
\end{align*}
where $a(s):=D^2F(X^M(s))(\eta^h(s),\eta^k(s))$ and $b(s):=D^2G(X^M(s))(\eta^h(s),\eta^k(s))$.  

By It\^o formula, the duality between $\dot H^{-2\varrho}$ and $\dot H^{2\varrho}$, the embedding $\dot H^1\hookrightarrow\dot H^{2\varrho}$, Young inequality, and \eqref{D-coup}, we obtain
\begin{align*}
\frac{\dd}{\dd s}\ee\|\zeta^{h,k}(s)\|^{2p_1}
\le -c\ee\|\zeta^{h,k}(s)\|^{2p_1} +C\ee\Big[\|\zeta^{h,k}(s)\|^{2p_1-2} (\|a(s)\|_{-2\varrho}^2+\|b(s)\|_{\LL_2^0}^2)\Big].
\end{align*}
Thus, with $z_2(s):=\ee\|\zeta^{h,k}(s)\|^{2p_1}$, H\"older inequality gives
\begin{align*}
z_2'(s) \le -cz_2(s) +Cz_2(s)^{1-\frac1{p_1}}\Big( \|a(s)\|_{L_\omega^{2p_1}\dot H^{-2\varrho}}^2 +\|b(s)\|_{L_\omega^{2p_1}\LL_2^0}^2\Big).
\end{align*}
Arguing as in the proof of \eqref{DX-est}, namely applying the $R_\iota$-regularization to $(z_2+\iota)^{1/p_1}$, using $\zeta^{h,k}(0)=0$, Gronwall inequality, and then letting $\iota\downarrow0$, we obtain
\begin{align}
\|\zeta^{h,k}(t)\|_{L_\omega^{2p_1}}^2
&\le C \int_0^t e^{-c(t-s)} ( \|a(s)\|_{L_\omega^{2p_1}\dot H^{-2\varrho}}^2 +\|b(s)\|_{L_\omega^{2p_1}\LL_2^0}^2 )\dd s.
\label{zeta}
\end{align}
It remains to estimate $a(s)$ and $b(s)$. Set $\mu_1=0$ and $\mu_2=2\kappa_{d,\varrho}+2\delta$. For sufficiently small $\delta>0$, we have $\mu_2 \in [0,\min\{1,d/2\})$, $\mu_1+\mu_2+2\varrho>d/2$, and $\vartheta_i+\mu_i/2<1/2$, $i=1,2$. 
Hence, by \eqref{D2F-d}, H\"older inequality, \eqref{DX-est}, and \eqref{DX-spatial},
\begin{align*}
\|a(s)\|_{L_\omega^{2p_1}\dot H^{-2\varrho}} 
&\le C_\delta(1+s^{-\vartheta_1-\vartheta_2-\kappa_{d,\varrho}-\delta})e^{-cs} \Psi_t^{(2)}(x) \|h\|_{-2\vartheta_1}\|k\|_{-2\vartheta_2}.
\end{align*}
Moreover, by \eqref{D2G} and \eqref{DX-est},
\begin{align*}
\|b(s)\|_{L_\omega^{2p_1}\LL_2^0}
&\le C(1+s^{-\vartheta_1-\vartheta_2})e^{-cs}\Psi_t^{(2)}(x)\|h\|_{-2\vartheta_1}\|k\|_{-2\vartheta_2}.
\end{align*}
Substituting these two estimates into \eqref{zeta}, and using $\vartheta_1+\vartheta_2+\kappa_{d,\varrho}+\delta<1/2$, gives
\begin{align*}
\|\zeta^{h,k}(t)\|_{L_\omega^{2p_1}}
\le C(1+t^{-\vartheta_1-\vartheta_2-\kappa_{d,\varrho}-\delta}) e^{-ct}\Psi_t^{(2)}(x) \|h\|_{-2\vartheta_1}\|k\|_{-2\vartheta_2},
\end{align*}
which proves \eqref{D2X-est}.

For \eqref{D2X-mix-est}, let $h \in V_M$ and $k \in \dot H^\chi\cap V_M$. Since $\chi+2\varrho>d/2$, by \eqref{D2F-d} with $(\mu_1,\mu_2)=(0,\chi)$, \eqref{D2G}, H\"older inequality, \eqref{DX-est} with $\alpha=\vartheta$, and \eqref{DX-pos}, we obtain
\begin{align*}
	\|a(s)\|_{L_\omega^{2p_1}\dot H^{-2\varrho}} +\|b(s)\|_{L_\omega^{2p_1}\LL_2^0}
\le C(1+s^{-\vartheta})e^{-cs}\Psi_t^{(2)}(x) \|h\|_{-2\vartheta}\|k\|_\chi .
\end{align*}
Together with \eqref{zeta}, this gives \eqref{D2X-mix-est}.

For the third variation, separate the $D^3F$-term by setting
\begin{align*}
\Theta^{h,k,l}(t) := \int_0^t S_M(t-s)\PP_MD^3F(X^M(s)) (\eta^{h}(s),\eta^{k}(s),\eta^{l}(s))\dd s.
\end{align*} 
Choose $\nu_1,\nu_2 \in [0,\min\{1,d/2\})$ such that
\begin{align*}
\nu_1+\nu_2+\chi+\min\{2\varrho,\frac d 2\}>d,\quad
\varepsilon_i+\frac{\nu_i}{2}<\frac12,\ i=1,2,\quad
\frac{\nu_1+\nu_2}{2}< \kappa_{d,\chi}+1-\varrho+\delta .
\end{align*}
This is possible since $\varepsilon_1+\varepsilon_2+\kappa_{d,\chi}+\delta<\varrho$.
By \eqref{D3F-d} with $(\nu_1,\nu_2, \nu_3)=(\nu_1,\nu_2,\chi)$, H\"older inequality, \eqref{DX-spatial} with
$(\beta,\sigma)=(\varepsilon_i,\nu_i)$ for $i=1,2$, and \eqref{DX-pos} for the third direction,
\begin{align*}
&\|D^3F(X^M(s))(\eta^{h}(s),\eta^{k}(s),\eta^{l}(s))
\|_{L_\omega^{2p_\Theta}\dot H^{-2\varrho}}\\
&\quad\le C (1+s^{-\varepsilon_1-\nu_1/2}) (1+s^{-\varepsilon_2-\nu_2/2}) e^{-cs}\Psi_t^{(\Theta)}(x) \|h\|_{-2\varepsilon_1} \|k\|_{-2\varepsilon_2} \|l\|_\chi.
\end{align*}
Hence, by \eqref{smo},
\begin{align}
\|\Theta^{h,k,l}(t)\|_{L_\omega^{2p_\Theta}}
&\le C_\delta \int_0^t(t-s)^{-\varrho}e^{-c(t-s)}(1+s^{-\varepsilon_1-\varepsilon_2-\kappa_{d,\chi}-1+\varrho-\delta}) e^{-cs}\dd s \notag\\
&\quad\times \Psi_t^{(\Theta)}(x) \|h\|_{-2\varepsilon_1} \|k\|_{-2\varepsilon_2} \|l\|_\chi \notag\\
&\le C_\delta (1+t^{-\varepsilon_1-\varepsilon_2-\kappa_{d,\chi}-\delta}) e^{-ct}\Psi_t^{(\Theta)}(x) \|h\|_{-2\varepsilon_1} \|k\|_{-2\varepsilon_2} \|l\|_\chi.
\label{Theta-mix}
\end{align}

Define $\bar\psi^{h,k,l}(t):=\psi^{h,k,l}(t)-\Theta^{h,k,l}(t)$.
Then $\bar\psi^{h,k,l}(0)=0$ and
\begin{align*}
\dd\bar\psi^{h,k,l}(s)
&=[\LL_s\bar\psi^{h,k,l}(s)+\PP_M\bar H(s)]\dd s\\
&\quad+[\PP_MDG(X^M(s))\bar\psi^{h,k,l}(s)+\PP_M\bar J(s)]\dd W(s),
\end{align*}
where
\begin{align*}
\bar H(s)
&:=D^2F(X^M(s))(\zeta^{k,l}(s),\eta^h(s))+D^2F(X^M(s))(\zeta^{h,k}(s),\eta^l(s))\\
&\quad+D^2F(X^M(s))(\zeta^{h,l}(s),\eta^k(s))+DF(X^M(s))\Theta^{h,k,l}(s),\\
\bar J(s)
&:=D^2G(X^M(s))(\zeta^{k,l}(s),\eta^h(s))+D^2G(X^M(s))(\zeta^{h,k}(s),\eta^l(s))\\
&\quad+D^2G(X^M(s))(\zeta^{h,l}(s),\eta^k(s))+D^3G(X^M(s))(\eta^h(s),\eta^k(s),\eta^l(s))\\
&\quad+DG(X^M(s))\Theta^{h,k,l}(s).
\end{align*}
For the $D^2F$-terms in $\bar H$, we distinguish whether the $\dot H^\chi$-direction is carried by $\eta$ or by $\zeta$. If it is carried by $\eta^l$, then \eqref{D2F-d} with $(\mu_1,\mu_2)=(0,\chi)$, H\"older inequality, \eqref{D2X-est}, and \eqref{DX-pos} give
\begin{align*}
&\|D^2F(X^M(s))(\zeta^{h,k}(s),\eta^l(s)) \|_{L_\omega^{2p_2}\dot H^{-2\varrho}}\\
&\quad\le C_\delta(1+s^{-\varepsilon_1-\varepsilon_2-\kappa_{d,\varrho}-\delta})e^{-cs} \Psi_t^{(3)}(x) \|h\|_{-2\varepsilon_1} \|k\|_{-2\varepsilon_2} \|l\|_\chi .
\end{align*}
If it is carried by
$\zeta^{k,l}$, choose $\mu\in[0,\min\{1,d/2\})$ such that
$
\mu+2\varrho>\frac d2,
\frac{\mu}{2}<\kappa_{d,\varrho}+\delta,
\varepsilon_i+\frac{\mu}{2}<\frac12, i=1,2$.
Applying \eqref{D2F-d} with $(\mu_1,\mu_2)=(0,\mu)$,
H\"older inequality, \eqref{D2X-mix-est} with
$\vartheta=\varepsilon_2$, and \eqref{DX-spatial} with
$(\beta,\sigma)=(\varepsilon_1,\mu)$, and estimating the
term involving $\zeta^{h,l}$ analogously by interchanging
$(h,\varepsilon_1)$ and $(k,\varepsilon_2)$, we obtain
\begin{align*}
&\|D^2F(X^M(s))(\zeta^{k,l}(s),\eta^h(s))\|_{L_\omega^{2p_2}\dot H^{-2\varrho}}+\|D^2F(X^M(s))(\zeta^{h,l}(s),\eta^k(s)) \|_{L_\omega^{2p_2}\dot H^{-2\varrho}} \\
&\quad\le C (1+s^{-\varepsilon_1-\varepsilon_2-\kappa_{d,\varrho}-\delta})e^{-cs} \Psi_t^{(3)}(x) \|h\|_{-2\varepsilon_1} \|k\|_{-2\varepsilon_2} \|l\|_\chi .
\end{align*} 
Moreover, by \eqref{F-bounds-1}, the embedding $H\hookrightarrow\dot H^{-2\varrho}$, and \eqref{Theta-mix},
\begin{align*}
&\|DF(X^M(s))\Theta^{h,k,l}(s)\|_{L_\omega^{2p_2}\dot H^{-2\varrho}}\\
\le&~ C (1+s^{-\varepsilon_1-\varepsilon_2-\kappa_{d,\chi}-\delta})e^{-cs} \Psi_t^{(3)}(x) \|h\|_{-2\varepsilon_1} \|k\|_{-2\varepsilon_2} \|l\|_\chi .
\end{align*}
Together with the estimates for the $D^2F$-terms, this gives
\begin{align*}
\|\bar H(s)\|_{L_\omega^{2p_2}\dot H^{-2\varrho}}
&\le C (1+s^{-\varepsilon_1-\varepsilon_2-\kappa_{d,\varrho,\chi}-\delta})e^{-cs} \Psi_t^{(3)}(x) \|h\|_{-2\varepsilon_1} \|k\|_{-2\varepsilon_2} \|l\|_\chi .
\end{align*}
Similarly, by \eqref{D2G}, \eqref{D3G}, \eqref{DG}, \eqref{D2X-est},
\eqref{D2X-mix-est}, \eqref{DX-est}, and \eqref{Theta-mix},
\begin{align*}
\|\bar J(s)\|_{L_\omega^{2p_2}\LL_2^0}
&\le C (1+s^{-\varepsilon_1-\varepsilon_2-\kappa_{d,\varrho,\chi}-\delta})e^{-cs} \Psi_t^{(3)}(x) \|h\|_{-2\varepsilon_1} \|k\|_{-2\varepsilon_2} \|l\|_\chi .
\end{align*}
Applying the same energy argument as for $\zeta^{h,k}$ to $\bar\psi^{h,k,l}$ and using $\varepsilon_1+\varepsilon_2+\kappa_{d,\varrho,\chi}+\delta<1/2$, we obtain
\begin{align}
\|\bar\psi^{h,k,l}(t)\|_{L_\omega^{2p_2}}
&\le C(1+t^{-\varepsilon_1-\varepsilon_2-\kappa_{d,\varrho,\chi}-\delta}) e^{-ct}\Psi_t^{(3)}(x) \|h\|_{-2\varepsilon_1}\|k\|_{-2\varepsilon_2}\|l\|_\chi .
\label{psibar-mix}
\end{align}
Since $\psi^{h,k,l}=\bar\psi^{h,k,l}+\Theta^{h,k,l}$, \eqref{Theta-mix} and \eqref{psibar-mix} imply \eqref{D3X-mix-est}. 
\end{proof}

By the chain rule and the definition \eqref{def-u}, we have the following UIT regularity results for the solution to the BKE \eqref{kol}.

\begin{cor}\label{cor-DU}
Under the conditions in Lemma \ref{lm-DX}, for any $\varphi \in \CC_b^3(H)$, there exist positive constants $C$ and $c$ such that 
\begin{align}
|D U^M(t, x) h| & \leq C (1+t^{-\alpha}) e^{- ct} \Psi_t^{(1)}(x) \|h\|_{-2\alpha}, \label{Du}\\
|D^2 U^M(t, x) (h, k) | & \leq C (1+t^{-\vartheta_1-\vartheta_2-\kappa_{d,\varrho}-\delta} ) e^{- ct} \Psi_t^{(2)}(x) \|h\|_{-2\vartheta_1}\|k\|_{-2\vartheta_2},  \label{D2u} \\
|D^2 U^M(t, x) (h, k) | & \leq C (1+t^{-\vartheta} ) e^{- ct} \Psi_t^{(2)}(x) \|h\|_{-2\vartheta}\|k\|_{\chi},  \label{D2u-mix} \\
|D^3 U^M(t, x) (h,k,l) | & \leq  C (1+t^{-\varepsilon_1-\varepsilon_2-\kappa_{d,\varrho,\chi}-\delta} ) e^{- ct} \Psi_t^{(3)}(x) \|h\|_{-2\varepsilon_1}\|k\|_{-2\varepsilon_2}\|l\|_{\chi},  \label{D3u}
\end{align}   
where the parameters are as in Lemma \ref{lm-DX}.
\end{cor}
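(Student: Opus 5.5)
The plan is to differentiate $U^M(t,x)=\ee\varphi(X^M(t,x))$ under the expectation by the chain rule. This is legitimate because $X^M(t,\cdot)$ solves a finite-dimensional SDE on $V_M$ with $\CC^3$ coefficients, so $x\mapsto X^M(t,x)$ is three times mean-square differentiable, and Lemma~\ref{lm-DX} provides integrable bounds on its derivatives. The derivative formulas are
\begin{align*}
DU^M_t(x)h&=\ee\big[D\varphi(X^M_t)\eta^h(t)\big],\\
D^2U^M_t(x)(h,k)&=\ee\big[D^2\varphi(X^M_t)(\eta^h,\eta^k)+D\varphi(X^M_t)\zeta^{h,k}\big],\\
D^3U^M_t(x)(h,k,l)&=\ee\big[D^3\varphi(X^M_t)(\eta^h,\eta^k,\eta^l)+D^2\varphi(X^M_t)(\zeta^{h,k},\eta^l)+D^2\varphi(X^M_t)(\zeta^{h,l},\eta^k)\\
&\qquad+D^2\varphi(X^M_t)(\zeta^{k,l},\eta^h)+D\varphi(X^M_t)\psi^{h,k,l}\big].
\end{align*}
Since $\varphi\in\CC_b^3(H)$, each $D^j\varphi$ is bounded on $H$. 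Every term is therefore bounded by products of $H$-norms of the variations, and Hölder's inequality in $\omega$ applies.

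For \eqref{Du}, I bound $|DU^M_t(x)h|\le\|D\varphi\|_\infty\|\eta^h(t)\|_{L^1_\omega}$ and then apply \eqref{DX-est}. For \eqref{D2u}, the $\zeta$-term is handled directly by \eqref{D2X-est}. The product term is bounded by $\|\eta^h\|_{L^2_\omega}\|\eta^k\|_{L^2_\omega}$, and \eqref{DX-est} with $\alpha=\vartheta_1$ and $\alpha=\vartheta_2$ gives a bound $(1+t^{-\vartheta_1})(1+t^{-\vartheta_2})e^{-ct}|\Psi^{(1)}_t|^2$. This is dominated by $(1+t^{-\vartheta_1-\vartheta_2-\kappa_{d,\varrho}-\delta})e^{-ct}\Psi^{(2)}_t$, because $(\Psi^{(1)})^2\le C\Psi^{(2)}$ (since $2r_1=2q\le 4q-1=r_2$ and $1+a^{2q}\le 2(1+a^{4q-1})$) and $t^{-a}\le 1+t^{-b}$ whenever $0\le a\le b$.

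For \eqref{D2u-mix}, I use \eqref{D2X-mix-est} for the $\zeta$-term. In the $\eta^h\otimes\eta^k$ term, I control $\eta^k$ by \eqref{DX-pos} with $\sigma=0$, which gives $\|\eta^k\|_{L^{2p}_\omega}\le Ce^{-ct}|\Psi^{(1)}_t|^2\|k\|$, and then use $\|k\|\le C\|k\|_\chi$. The factor $\eta^h$ is controlled by \eqref{DX-est} with $\alpha=\vartheta$.

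The third-order bound \eqref{D3u} takes the most bookkeeping. The $\psi$-term follows from \eqref{D3X-mix-est}. In each $D^2\varphi(\zeta,\eta)$ term I assign the $\dot H^\chi$-direction as follows:
\begin{itemize}
\item for $(\zeta^{h,k},\eta^l)$, I use \eqref{D2X-est} for $\zeta^{h,k}$ and \eqref{DX-pos} for $\eta^l$;
\item for $(\zeta^{k,l},\eta^h)$, I use \eqref{D2X-mix-est} with $\vartheta=\varepsilon_2$ for $\zeta^{k,l}$ and \eqref{DX-est} with $\alpha=\varepsilon_1$ for $\eta^h$;
\item $(\zeta^{h,l},\eta^k)$ is symmetric to the previous case.
\end{itemize}
The triple-$\eta$ term is handled with \eqref{DX-est} twice and \eqref{DX-pos} once. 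All the resulting time singularities have exponents at most $\varepsilon_1+\varepsilon_2+\kappa_{d,\varrho}+\delta\le\varepsilon_1+\varepsilon_2+\kappa_{d,\varrho,\chi}+\delta$, so the claimed prefactor dominates each of them.

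The main obstacle is matching the moment exponents in Hölder's inequality with the integrability orders available in Lemma~\ref{lm-DX}, and checking that the products of $\Psi$'s are absorbed into $\Psi^{(2)}$ and $\Psi^{(3)}$.
\begin{itemize}
\item \emph{Moment exponents.} The derivative estimates are stated in $L^{4p}_\omega$, $L^{2p}_\omega$, $L^{2p_1}_\omega$ and $L^{2p_2}_\omega$ with $p_2,p_1\ge1$. Since $D^j\varphi$ is deterministically bounded, only pairs or triples of variation terms need splitting. Conjugate exponents $\le 4p$ and $\le 2p_1$ suffice, because $\frac1{2p_1}+\frac1{4p}<1$ when $p_1<2p$.
\item \emph{Absorption of $\Psi$'s.} The powers of $\Psi$ satisfy, for instance, $r_2+2r_1=6q-1\le 8q-2=r_3$ and $3\cdot 2r_1=6q\le r_3$. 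Since every $\Psi^{(j)}$ is built from the same supremum quantity $\sup_{s\le t}\|X^M(s,x)\|_{L_\omega^R L_\xi^\infty}$, each product is bounded by $C\Psi^{(3)}_t(x)$.
\end{itemize}
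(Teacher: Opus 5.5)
Your proposal is correct and follows the same route as the paper, which derives Corollary~\ref{cor-DU} in one line from the chain rule applied to $U^M(t,x)=\ee\varphi(X^M(t,x))$ together with the variational bounds of Lemma~\ref{lm-DX}. Your term-by-term H\"older bookkeeping supplies the detail that one line leaves implicit: the placement of the $\dot H^\chi$-direction in the mixed terms, and the absorption of products of $\Psi^{(1)}$ and $\Psi^{(2)}$ into $\Psi^{(2)}$ and $\Psi^{(3)}$.
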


\begin{rk}
We point out that, in the case of additive noise, Lemma \ref{lm-DX} and Corollary \ref{cor-DU} remain valid for parameters $\alpha, \vartheta_1, \vartheta_2, \varepsilon_1, \varepsilon_2, \varepsilon_3 \in [0,1)$ satisfying $\alpha < 1/2$, $\vartheta_1+\vartheta_2 < 1/2$, and $\varepsilon_1+\varepsilon_2+\varepsilon_3<1/2$; see, e.g., \cite[Lemma 5]{CHS21} and \cite[Lemma~5.6]{JW25}. 
\end{rk}

\subsection{Malliavin regularity estimate of GTEM}
To control the stochastic terms appearing in the weak error decomposition in Section~\ref{sec4}, we record Malliavin estimates for the GTEM \eqref{gtem} and its interpolation \eqref{xhat}.

Let $\{g_l\}_{l\ge1}$ be the eigenbasis of $Q$ introduced in Section~\ref{sec2}, and set $\widetilde g_l:=Q^{1/2}g_l$, $l \in \nn_+$. 
For a $V_N$-valued random variable $Y$, its Malliavin derivative is the process $r\mapsto \DD_r Y \in \LL_2(U_0;V_N)$; we write $\DD_r^lY:=(\DD_rY) \widetilde g_l$, $l \in \nn_+$.
The following Malliavin chain rule and IBP formula hold for any $\varphi \in C^1(V_N)$ and any adapted $\Gamma \in L^2(\Omega\times[0, T];\LL_2^0)$:  
\begin{align} 
\DD_r[\varphi(Y)]& =D\varphi(Y)\DD_rY, \quad Y \in \mathbb D^{1,p}(V_N), \label{chain} \\ 
\ee\Big\langle Z, \int_0^ \infty\Gamma(r)\dd W(r)\Big\rangle
& =\ee \int_0^ \infty\langle \DD_rZ,\Gamma(r)\rangle_{\LL_2^0}\dd r,
\quad Z \in \mathbb D^{1,2}(V_N). \label{ibp}
\end{align}

\begin{lm}\label{lm-mall-p}
Let $p \in \nn_+$, $X_0 \in \dot H^1$, and Assumptions \ref{ap-f}-\ref{ap-G} hold with $\lambda_1>K_3+\frac{2p-1}{2}\max\{K_4^2,K_6\}$.
Then there exist $C,c>0$ and $\tau_{\max} \in (0,1)$ such that for any $k\ge1$, $0\le s\le t_k$, $t \in [t_k,t_{k+1})$, $\tau \in (0,\tau_{\max}]$, and $0\le r\le t$,
\begin{align}
&\|\DD_sX_k^N\|_{L_\omega^{2p}\LL_2^0} \le Ce^{-c(t_k-[s]_\tau)}(1+\|X_0\|_1), \label{DM-pointwise-p}\\
&\|\DD_r\hat X^N(t)\|_{L_\omega^{2p}\LL_2^0} \le Ce^{-c(t-[r]_\tau)}(1+\|X_0\|_1). \label{mall-est-local-p}
\end{align}
\end{lm}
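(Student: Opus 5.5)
Fix $s\in[t_i,t_{i+1})$, so that $\ell(s)=i$ and $[s]_\tau=t_i$. We will differentiate the one-step recursion \eqref{full} in the Malliavin sense and then propagate the resulting linear recursion forward.

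\textbf{Initial value.} Since $X^N_j$ is $\FFF_{t_j}$-measurable, $\DD_sX^N_j=0$ for $j\le i$. At step $i$, the only dependence on $W$ near time $s$ is through $\delta_iW$, so
\begin{align*}
\DD_sX^N_{i+1}=S_{N,\tau}\PP_NG(X^N_i).
\end{align*}
By \eqref{G-lip} and \eqref{xnm-1+} with $\gamma=0$, this gives $\|\DD_sX^N_{i+1}\|_{L_\omega^{2p}\LL_2^0}\le C(1+\|X_0\|_1)$.

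\textbf{Linear recursion.} For $j\ge i+1$, the chain rule \eqref{chain} gives, for each direction $\widetilde g_l$,
\begin{align*}
\DD_s^lX^N_{j+1}=S_{N,\tau}\big[\DD_s^lX^N_j+\tau\PP_NDF_\tau(X^N_j)\DD_s^lX^N_j+\PP_NDG(X^N_j)(\DD_s^lX^N_j)\,\delta_jW\big].
\end{align*}
This is a linear recursion driven by the random coefficients $DF_\tau(X^N_j)$ and $DG(X^N_j)$. The increment $\delta_jW$ is independent of $\FFF_{t_j}$, and both $\DD_sX^N_j$ and $X^N_j$ are $\FFF_{t_j}$-measurable.

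\textbf{One-step contraction.} Write $Y_j:=\DD_sX^N_j$ with the Hilbert--Schmidt norm summed over $l$. First use $\|S_{N,\tau}\|_{\LL}\le(1+\tau\lambda_1)^{-1}$. Then repeat the conditional-moment argument from the proof of Proposition~\ref{prop-h1+}, now in the $H$-norm. Set
\begin{align*}
Z(t):=(\mathrm{Id}+\tau\PP_NDF_\tau(X^N_j))Y_j+\PP_NDG(X^N_j)Y_j\,(W(t_j+t)-W(t_j)),
\end{align*}
apply It\^o's formula to $\|Z(t)\|^{2p}$, and take $\ee_j$. This yields
\begin{align*}
\ee_j\|Y_{j+1}\|^{2p}\le(1+\tau\lambda_1)^{-2p}\big(\|Y_j+\tau DF_\tau(X^N_j)Y_j\|^2+(2p-1)\tau K_4^2\|Y_j\|^2\big)^p.
\end{align*}
For the drift part, \eqref{ftau'+} gives pointwise
\begin{align*}
|(1+\tau f_\tau')y|^2=(1+2\tau f_\tau'+\tau^2|f_\tau'|^2)|y|^2\le\big(1+2\tau(K_3+K_3^2\tau)\big)|y|^2 .
\end{align*}
This is where the explicit tamed scheme needs \eqref{ftau'+} rather than \eqref{D-coup}, because $DF_\tau$ is applied at the explicit point. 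The bracket is therefore at most $\big(1+(2K_3+(2p-1)K_4^2)\tau+C\tau^2\big)\|Y_j\|^2$, and the whole factor is at most $(1-c\tau)$ for $\tau\le\tau_{\max}$, using $\lambda_1>K_3+\frac{2p-1}{2}K_4^2$. Hence $\ee\|Y_{j+1}\|^{2p}\le e^{-c\tau}\ee\|Y_j\|^{2p}$. Iterating from $j=i+1$ to $k$ gives \eqref{DM-pointwise-p} with the decay factor $e^{-c(t_k-t_{i+1})}\le Ce^{-c(t_k-[s]_\tau)}$.

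\textbf{Interpolation.} For \eqref{mall-est-local-p}, take $t\in[t_k,t_{k+1})$ and use the explicit form \eqref{xhat}, namely
\begin{align*}
\hat X^N(t)=X^N_k+(t-t_k)\big[A_NS_{N,\tau}X^N_k+S_{N,\tau}\PP_NF_\tau(X^N_k)\big]+S_{N,\tau}\PP_NG(X^N_k)(W(t)-W(t_k)).
\end{align*}
There are two cases.

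If $r\ge t_k$, then $\DD_r\hat X^N(t)=S_{N,\tau}\PP_NG(X^N_k)$, which is bounded by $C(1+\|X_0\|_1)$.

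If $r<t_k$, differentiate the display term by term:
\begin{align*}
\DD_r\hat X^N(t)=\big[\mathrm{Id}+(t-t_k)A_NS_{N,\tau}+(t-t_k)S_{N,\tau}\PP_NDF_\tau(X^N_k)+S_{N,\tau}\PP_NDG(X^N_k)(W(t)-W(t_k))\big]\DD_rX^N_k .
\end{align*}
Since $(t-t_k)\le\tau$, we have $\|(t-t_k)A_NS_{N,\tau}\|_{\LL}\le1$, and \eqref{ftau'} gives $\tau|f_\tau'|\le C\tau^{1/2}$. The stochastic factor is handled with independence and BDG. Together these show that the operator in brackets has $L_\omega^{2p}$-norm bounded by a constant, uniformly in $k$. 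Combining with \eqref{DM-pointwise-p} and absorbing $e^{c\tau}$ into $C$ gives \eqref{mall-est-local-p}.

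\textbf{Main obstacle.} The key step is the uniform one-step contraction. Both the tamed drift derivative and the multiplicative noise term must be controlled in the same $2p$-th moment, without any loss in the constant that would accumulate over infinitely many steps. This is exactly the role of the sharp combination of \eqref{ftau'+} with the threshold $\frac{2p-1}{2}K_4^2$. I expect the condition involving $K_6$ to enter only through the moment bounds used for the initial value.
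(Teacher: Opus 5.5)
Your proposal is correct and follows essentially the same route as the paper: you differentiate the scheme to get $\DD_s X^N_{\ell(s)+1}=S_{N,\tau}\PP_N G(X^N_{\ell(s)})$ and the linear recursion, obtain a one-step conditional contraction from the conditional-moment argument of \eqref{e_j} combined with \eqref{ftau'+} and \eqref{DG}, iterate, and then treat the interpolation separately for $r\ge t_k$ and $r<t_k$ using \eqref{ftau'} and BDG. The only cosmetic difference is that you use the resolvent bound $\|S_{N,\tau}\|_{\LL}\le(1+\tau\lambda_1)^{-1}$ where the paper uses the Poincar\'e inequality to get the factor $(1+2\tau\lambda_1)^{-1}$ on $\|Y_{j+1}\|^2$; both lead to the same threshold $\lambda_1>K_3+\frac{2p-1}{2}K_4^2$.
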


\begin{proof}
Fix $s \in [t_m,t_{m+1})$, differentiating \eqref{full} gives
\begin{align}\label{DM-p}
\DD_s^lX_j^N=0,\quad 0\le j\le m;\quad
\DD_s^lX_{m+1}^N=S_{N,\tau}\PP_NG(X_m^N)\widetilde g_l,
\end{align}
and for $j\ge m+1$, differentiating \eqref{gtem} yields 
\begin{align}  \label{DM-rec-p}
\DD_sX_{j+1}^N
& = \DD_sX_j^N+\tau A_N\DD_sX_{j+1}^N +\tau\PP_NDF_\tau(X_j^N)\DD_sX_j^N \nonumber \\
& \quad +\PP_NDG(X_j^N)(\DD_sX_j^N)\delta_j W,
\end{align}
where the last term is understood columnwise, namely,
\begin{align*}
 (\PP_NDG(X_j^N)(\DD_sX_j^N)\delta_j W )u
:=\PP_NDG(X_j^N)(\DD_sX_j^Nu)\delta_j W,
\quad u \in U_0.
\end{align*} 
Set $Y_j:=\DD_sX_j^N \in \LL_2^0$. By \eqref{DM-rec-p} and the Poincar\'e inequality \eqref{poin}, we have
\begin{align*}
(1+2\lambda_1\tau)\|Y_{j+1}\|_{\LL_2^0}^2
\le \|Y_j+\tau\PP_NDF_\tau(X_j^N)Y_j+\PP_NDG(X_j^N)(Y_j)\delta_j W\|_{\LL_2^0}^2.
\end{align*}
Taking the $p$-th power and the conditional expectation $\ee_j$ on both sides yields
\begin{align*}
(1+2\tau\lambda_1)^p \ee_j \|Y_{j+1}\|_{\LL_2^0}^{2p}
\le \ee_j \|Y_j+\tau\PP_NDF_\tau(X_j^N)Y_j+\PP_NDG(X_j^N)(Y_j)\delta_jW\|_{\LL_2^0}^{2p}.
\end{align*} 
By an argument similar to \eqref{e_j}, combining \eqref{ftau'+} with \eqref{DG} gives
\begin{align}\label{mall-contract-p}
\ee_j\|Y_{j+1}\|_{\LL_2^0}^{2p}
&\le \Big(\frac{1+(2K_3+(2p-1)K_4^2)\tau+2K_3^2\tau^2}{1+2\tau\lambda_1}\Big)^p \|Y_j\|_{\LL_2^0}^{2p} 
\le (1-c\tau)\|Y_j\|_{\LL_2^0}^{2p},
\end{align}
where $c:=\frac{p (\lambda_1-K_3-\frac{2p-1}{2}K_4^2)}{(1+2\lambda_1)^p}>0$ with $\tau\le \frac{\lambda_1-K_3-\frac{2p-1}{2}K_4^2}{2K_3^2}$.
Iterating \eqref{mall-contract-p} from $j=m+1$ to $j=k-1$, and using \eqref{DM-p}, \eqref{G-lip}, and \eqref{xnm-1+} with $\gamma=0$, gives
\begin{align*}
\|\DD_sX_k^N\|_{L_\omega^{2p}\LL_2^0}^{2p}
&\le C(1-c\tau)^{k-m-1} \|S_{N,\tau}\PP_NG(X_m^N)\|_{L_\omega^{2p}\LL_2^0}^{2p} \\
&\le Ce^{-C(t_k-[s]_\tau)}(1+\|X_0\|_1^{2p}),
\end{align*}
which proves \eqref{DM-pointwise-p}.

It remains to estimate the interpolation. For $t \in [t_k,t_{k+1})$, differentiating \eqref{xhat} gives
\begin{align}\label{DM-hat-column-p}
\DD_r^l\hat X^N(t)
= \begin{cases}
& S_{N,\tau}\PP_NG(X_k^N)\widetilde g_l, \quad r \in [t_k,t],\\
&\DD_r^lX_k^N +(t-t_k)A_NS_{N,\tau}\DD_r^lX_k^N +(t-t_k)S_{N,\tau}\PP_NDF_\tau(X_k^N)\DD_r^lX_k^N\\
&\quad +S_{N,\tau}\PP_NDG(X_k^N)(\DD_r^lX_k^N)(W(t)-W(t_k)), \quad 0\le r<t_k.
\end{cases}
\end{align}
If $r \in [t_k,t]$, then \eqref{G1} and \eqref{xnm-1+} with $\gamma=0$ imply
\begin{align*}
\|\DD_r\hat X^N(t)\|_{L_\omega^{2p}\LL_2^0}
\le C(1+\|X_0\|_1)
\le Ce^{-C(t-[r]_\tau)}(1+\|X_0\|_1).
\end{align*}
If $0\le r<t_k$, then \eqref{DM-hat-column-p}, \eqref{ftau'}, \eqref{DG}, and the BDG inequality yield
\begin{align*}
\|\DD_r\hat X^N(t)\|_{L_\omega^{2p}\LL_2^0}
\le C\|\DD_rX_k^N\|_{L_\omega^{2p}\LL_2^0}.
\end{align*}
Combining this with \eqref{DM-pointwise-p} gives \eqref{mall-est-local-p}.
\end{proof}

\begin{cor}\label{cor-mall-theta}
Let $p \in \nn_+$, $X_0 \in \dot H^{1+\gamma}$ with $\gamma\in(0,1)$ satisfying \eqref{gamma}, $\theta \in (0,\gamma)$ , and Assumptions \ref{ap-f}-\ref{ap-G} hold with $\lambda_1>K_3+\frac12\max\{(2p(q+1)-1)K_4^2,\,(2p(q+1)^2-1)K_6\}$.
There exist constants $C,c>0$ and $\tau_{\max} \in (0,1)$ such that for any $k \in \nn_+$, $0\le s\le t_k$, $t \in [t_k,t_{k+1})$, $\tau \in (0,\tau_{\max}]$, and $0\le r\le t$,
\begin{align}
\|\DD_sX_k^N\|_{L_\omega^{2p}\LL_2^\theta}
& \le Ce^{-c(t_k-[s]_\tau)} (1+\|X_0\|_{1+\gamma}^{q(q+1)+1}), \label{DM-pointwise-theta} \\ 
\|\DD_r \hat X^N(t)\|_{L_\omega^{2p}\LL_2^\theta}
& \le Ce^{-c(t-[r]_\tau)} (1+\|X_0\|_{1+\gamma}^{q(q+1)+1}). \label{mall-est-local-theta}
\end{align}
\end{cor}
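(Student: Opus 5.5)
We follow the proof of Lemma~\ref{lm-mall-p} but measure the Malliavin derivative in $\LL_2^\theta$ instead of $\LL_2^0$. The contraction used there relies on the coupled monotonicity in $H$, which does not transfer directly to $\dot H^\theta$. We therefore work with the discrete mild (variation-of-constants) form and treat the derivative terms as perturbations, using the already-established $\LL_2^0$ decay \eqref{DM-pointwise-p} as input. The argument has three steps.

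\emph{Step 1: discrete mild representation.} Fix $s\in[t_m,t_{m+1})$. By \eqref{DM-p}, $\DD_sX_{m+1}^N=S_{N,\tau}\PP_NG(X_m^N)$, and iterating \eqref{DM-rec-p} in the form \eqref{full} gives, for $k\ge m+1$,
\begin{align*}
\DD_sX_k^N&=S_{N,\tau}^{k-m}\PP_NG(X_m^N)+\tau\sum_{j=m+1}^{k-1}S_{N,\tau}^{k-j}\PP_NDF_\tau(X_j^N)\DD_sX_j^N\\
&\quad+\sum_{j=m+1}^{k-1}S_{N,\tau}^{k-j}\PP_NDG(X_j^N)(\DD_sX_j^N)\delta_jW.
\end{align*}
We estimate the three terms in $L_\omega^{2p}\LL_2^\theta$ separately.

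\emph{Step 2: the three terms.}
\begin{enumerate}
\item[(i)] \emph{Initial term.} Using \eqref{smo-pn} (exponential decay without singularity, since $\theta<1$), \eqref{G1} with $\|\cdot\|_{\LL_2^\theta}\le C\|\cdot\|_{\LL_2^1}$, and \eqref{xnm-1+}, it is bounded by $Ce^{-ct_{k-m}}(1+\|X_0\|_1)$.
\item[(ii)] \emph{Drift term.} Use the smoothing $\|S_{N,\tau}^{k-j}\|_{\LL(H,\dot H^\theta)}\le Ct_{k-j}^{-\theta/2}e^{-ct_{k-j}}$ and \eqref{F-bounds-1} with the embedding \eqref{gamma}, $\dot H^{1+\gamma}\hookrightarrow L^\infty_\xi$, to bound $DF_\tau(X_j^N)$; for $f_\tau$ a bound analogous to \eqref{F-bounds-1} follows from its definition. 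By H\"older's inequality the summand is controlled by $\|1+\|X_j^N\|_{1+\gamma}^q\|_{L_\omega^{4p}}\,\|\DD_sX_j^N\|_{L_\omega^{4p}\LL_2^0}$. The first factor is at most $C(1+\|X_0\|_{1+\gamma}^{q(q+1)})$ by \eqref{xnm-1+}; here moments of order $4pq$ are needed, which explains the $(2p(q+1)^2-1)K_6$ threshold. The second factor is at most $Ce^{-c(t_j-[s]_\tau)}(1+\|X_0\|_1)$ by \eqref{DM-pointwise-p} with $2p$ replaced by $4p$; this needs $K_4^2$ with a doubled index, consistent with the hypothesis $(2p(q+1)-1)K_4^2$ (if necessary, apply the hypothesis with a larger $p$). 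The weighted discrete convolution $\tau\sum_j t_{k-j}^{-\theta/2}e^{-ct_{k-j}}e^{-c(t_j-t_m)}\le Ce^{-c'(t_k-t_m)}$ then yields the factor $(1+\|X_0\|_{1+\gamma}^{q(q+1)+1})$ in \eqref{DM-pointwise-theta}.
\item[(iii)] \emph{Diffusion term.} Apply the discrete BDG inequality; for this term we need $\|DG(u)h\|_{\LL_2^\theta}$. Since $\theta<\gamma<1$ and $G$ is a Nemytskii operator, we obtain it by interpolation between \eqref{DG} and the $\dot H^1$-level bound implied by \eqref{G1}, giving something like $C(1+\|u\|_{1+\gamma})\|h\|_\theta$ or, better, a bound with $\|h\|$ at the cost of a smoothing factor $t_{k-j}^{-\theta}$. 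The resulting bound is $\big(\tau\sum_j t_{k-j}^{-\theta}e^{-ct_{k-j}}\|\DD_sX_j^N\|^2_{L_\omega^{2p}\LL_2^0}\big)^{1/2}$, which is square-summable because $\theta<1$, and it decays like $e^{-c(t_k-t_m)}$.
\end{enumerate}

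\emph{Main obstacle.} The main obstacle is Step 2(iii): we must make sure that the stochastic term only requires $\|\DD_sX_j^N\|_{\LL_2^0}$ together with the smoothing of $S_{N,\tau}^{k-j}$, and not a $\theta$-norm on the right-hand side. If that is not possible, we would instead derive a discrete Gronwall inequality in $\LL_2^\theta$ with a weakly singular kernel $t_{k-j}^{-\theta}$, using the exponential weights to keep it uniform in time.

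\emph{Step 3: the interpolation.} For $\hat X^N$ we use \eqref{DM-hat-column-p}. When $r\in[t_k,t]$, the bound follows from \eqref{G1} and \eqref{xnm-1+}. When $r<t_k$, we use $\|(t-t_k)A_NS_{N,\tau}\|_{\LL(\dot H^\theta)}\le C$, bound the $DF_\tau$ term with \eqref{ftau'} (so that $\tau\cdot\tau^{-1/2}$ is harmless), and control the noise term by BDG as in (iii). Together with \eqref{DM-pointwise-theta}, this gives \eqref{mall-est-local-theta}.
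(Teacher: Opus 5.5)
Your overall route is the paper's: iterate \eqref{DM-rec-p} into an initial term, a drift convolution and a stochastic convolution, feed in the $\LL_2^0$-decay \eqref{DM-pointwise-p}, and pay only the smoothing factors $t_{k-j}^{-\theta/2}$ (drift) and $t_{k-j}^{-\theta}$ (noise, after BDG), summed with \eqref{disc-sum}. The interpolation step is handled the same way.

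What you call the main obstacle is not one. By \eqref{DG}, $\sum_l\|DG(X_j^N)(Y_j\widetilde g_l)\|_{\LL_2^0}^2\le K_4^2\|Y_j\|_{\LL_2^0}^2$, and \eqref{smo-pn} with exponent $\theta/2$ supplies $t_{k-j}^{-\theta}e^{-ct_{k-j}}$. This is exactly the bound you wrote and exactly what the paper uses. Drop the ``interpolation'' alternative. Nothing in Assumption~\ref{ap-G} controls $DG$ into $\LL_2^\theta$ or $\LL_2^1$, since \eqref{G1} bounds $G$, not $DG$. That option would also put $\|Y_j\|_{\LL_2^\theta}$ on the right-hand side.

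\textbf{The moment bookkeeping in Step 2(ii) is wrong.} With your split $L_\omega^{4p}\times L_\omega^{4p}$ you need $\ee\|X_j^N\|_{1+\gamma}^{4pq}$. By Proposition~\ref{prop-h1+} (with $2p'=4pq$) this requires $\lambda_1>K_3+\frac{4pq(q+1)-1}{2}K_6$. Since $4pq(q+1)>2p(q+1)^2$ for $q\ge2$, this is strictly stronger than the stated hypothesis. So the threshold $(2p(q+1)^2-1)K_6$ does not explain moments of order $4pq$; it only yields moments of order $2p(q+1)$. You also cannot ``apply the hypothesis with a larger $p$'', because $p$ is fixed in the statement.

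The fix is to use H\"older exponents $\frac{q+1}{q}$ and $q+1$:
\[
\|DF_\tau(X_j^N)Y_j\|_{L_\omega^{2p}\LL_2^0}\le C\big\|1+\|X_j^N\|_{1+\gamma}^q\big\|_{L_\omega^{2p(q+1)/q}}\,\|Y_j\|_{L_\omega^{2p(q+1)}\LL_2^0}.
\]
\begin{itemize}
\item The first factor needs $\ee\|X_j^N\|_{1+\gamma}^{2p(q+1)}$, i.e.\ exactly the $(2p(q+1)^2-1)K_6$ condition, and is bounded by $C(1+\|X_0\|_{1+\gamma}^{q(q+1)})$.
\item The second factor is \eqref{DM-pointwise-p} with $p$ replaced by $p(q+1)$, which needs exactly $(2p(q+1)-1)\max\{K_4^2,K_6\}$.
\end{itemize}
This is precisely how the stated threshold is constructed, and it produces the factor $1+\|X_0\|_{1+\gamma}^{q(q+1)+1}$ in \eqref{DM-pointwise-theta}. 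With this correction your argument goes through.

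\textbf{A minor point in Step 3.} Your use of \eqref{ftau'} for the $DF_\tau$ term is a legitimate variant of the paper's use of \eqref{DF}. However, you must keep the factor $\|(-A)^{\theta/2}S_{N,\tau}\PP_N\|_{\LL}\le C\tau^{-\theta/2}$, because multiplication by $f_\tau'(X_k^N)$ does not act boundedly on $\dot H^\theta$ with only an $L^\infty$ bound. The term is then $\tau\cdot\tau^{-\theta/2}\cdot\tau^{-1/2}=\tau^{(1-\theta)/2}$ times $\|\DD_rX_k^N\|_{L_\omega^{2p}\LL_2^0}$, not $\tau\cdot\tau^{-1/2}$. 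This is still harmless since $\theta<1$.
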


\begin{proof}
Iterating \eqref{DM-rec-p} gives
\begin{align*}
Y_k
&=S_{N,\tau}^{k-m}\PP_NG(X_m^N)
+\tau\sum_{j=m+1}^{k-1}S_{N,\tau}^{k-j}\PP_NDF_\tau(X_j^N)Y_j \notag\\
&\quad+\sum_{j=m+1}^{k-1}S_{N,\tau}^{k-j}\PP_NDG(X_j^N)(Y_j)\delta_jW
=:Y_k^{(0)}+Y_k^{(F)}+Y_k^{(G)}.
\end{align*}
By \eqref{G1}, \eqref{xnm-1+}, and \eqref{smo-pn},
\begin{align*}
\|Y_k^{(0)}\|_{L_\omega^{2p}\LL_2^\theta}
\le Ce^{-c(t_k-[s]_\tau)}(1+\|X_0\|_{1+\gamma}).
\end{align*}
Assumption \ref{ap-f}, H\"older inequality, \eqref{gamma}, \eqref{xnm-1+}, and \eqref{DM-pointwise-p} yield
\begin{align}\label{DF}
\|DF_\tau(X_j^N)Y_j\|_{L_\omega^{2p}\LL_2^0}
\le Ce^{-c(t_j-[s]_\tau)}(1+\|X_0\|_{1+\gamma}^{q(q+1)+1}).
\end{align}
Hence, by \eqref{smo-pn} and \eqref{disc-sum} with $\varepsilon=1-\theta/2$,
\begin{align*}
\|Y_k^{(F)}\|_{L_\omega^{2p}\LL_2^\theta}
&\le C\tau\sum_{j=m+1}^{k-1}
(t_k-t_j)^{-\theta/2}e^{-c(t_k-t_j)}
e^{-c(t_j-[s]_\tau)}
(1+\|X_0\|_{1+\gamma}^{q(q+1)+1})\\
&\le Ce^{-c(t_k-[s]_\tau)}
(1+\|X_0\|_{1+\gamma}^{q(q+1)+1}).
\end{align*}
By BDG inequality in $\LL_2^\theta$,
\eqref{smo-pn}, \eqref{DG}, \eqref{DM-pointwise-p}, and \eqref{disc-sum}
with $\varepsilon=1-\theta$,
\begin{align*}
\|Y_k^{(G)}\|_{L_\omega^{2p}\LL_2^\theta}^2
&\le C\tau\sum_{j=m+1}^{k-1}
(t_k-t_j)^{-\theta}e^{-c(t_k-t_j)}
\|Y_j\|_{L_\omega^{2p}\LL_2^0}^2\\
&\le Ce^{-c(t_k-[s]_\tau)}
(1+\|X_0\|_1)^2.
\end{align*}
Combining the last three estimates proves \eqref{DM-pointwise-theta}.

For the interpolation, if $r \in [t_k,t]$, then \eqref{DM-hat-column-p}, \eqref{G1}, and \eqref{xnm-1+} directly give \eqref{mall-est-local-theta}. 
If $0\le r<t_k$, then \eqref{DM-hat-column-p}, \eqref{smo-pn}, \eqref{DG}, and the BDG inequality yield
\begin{align*}
\|\DD_r\hat X^N(t)\|_{L_\omega^{2p}\LL_2^\theta}
&\le C\|\DD_rX_k^N\|_{L_\omega^{2p}\LL_2^\theta}
+C\tau^{1-\theta/2}\|DF_\tau(X_k^N)\DD_rX_k^N\|_{L_\omega^{2p}\LL_2^0} \\
&\quad+C\tau^{(1-\theta)/2}\|\DD_rX_k^N\|_{L_\omega^{2p}\LL_2^0}.
\end{align*}
Using \eqref{DM-pointwise-theta}, \eqref{DM-pointwise-p}, and \eqref{DF} concludes \eqref{mall-est-local-theta}.
\end{proof}

\section{Proof of UIT Weak Error Estimate}
\label{sec4}
In this section, we prove the UIT weak error estimate in Theorem \ref{tm-weak}, which in turn yields the ergodic error estimate in Corollary \ref{cor-erg}. 
Let $\rho\in(0,1)$ be arbitrary. Throughout this section, choose $\varrho\in(1/4,1/2)$ and set $\chi=0$ when $\gamma=0$, while choosing $\chi\in(0,\gamma)$ when $\gamma>0$, such that $\rho<1/2+\varrho$ and $\chi+2\varrho>d/2$.
We also fix $0<\varepsilon<\min\{1/2, 1/2+\varrho-\rho\}$ and $0<\delta<\min\{1/2-\kappa_{d,\varrho,\chi}, \varrho-\kappa_{d,\chi}\}$.

Fix $m\ge2$ and $\varphi\in\CC_b^3(H)$.
 By the mean-value theorem and \cite[Theorem~4.1]{LS25},
\begin{align*}
    |\ee[\varphi(X(t_m))-\varphi(X^M(t_m))]|
    \le C(t_m,X_0)\lambda_M^{-\frac{1+\gamma}2}\to 0,
    \quad \text{as} \quad M\to \infty.
\end{align*}
It remains to estimate $|\ee[\varphi(X^M(t_m))-\varphi(X_m^N)]|$.
We split this term as
\begin{align*}
	&|\ee [\varphi(X^M(t_m))-\varphi(X^N_m)]| 
	=|\ee U_{t_m}^M(X^M_0)-\ee U_{0}^M(X^N_m)|\\
	& \le  |\ee [U_{t_m}^M(X^M_0) - U_{t_m}^M(X^N_0)]|
	+|\ee [U_{t_m}^M(X^N_0) - U_{0}^M(X^N_m)]|.
\end{align*}
Before estimating the two terms on the right-hand side, we record a direct consequence of Lemma~\ref{lm-xn} and the embedding \eqref{gamma}: for $j=1,2,3$,
\begin{align}
\Psi_t^{(j)}(x) \le C\bigl(1+\|x\|_{1+\gamma}^{r_j(q+1)}\bigr),
\qquad x \in \dot H^{1+\gamma},\ t\ge0,
\label{phi}
\end{align}
where $(r_1,r_2,r_3)=(q,4q-1,8q-2)$.
Hence, by \eqref{Du}, H\"older inequality, \eqref{phi}, and \eqref{pn}, for any $\alpha \in [0,1/2)$,
\begin{align*}
&\quad |\ee [U_{t_m}^M(X^M_0) - U_{t_m}^M(X^N_0)]| \\ 
& \le C(1+t_m^{-\alpha})e^{- ct_m} \int_0^1 \ee [\Psi_{t_m}^{(1)}(\theta X^M_0+(1-\theta)X^N_0)
\|({\rm Id} - \PP_N)X^M_0\|_{-2 \alpha}] \dd \theta \\
& \le C(1+t_m^{-\alpha}) (1+\|X^M_0\|_{L_\omega^{2q(q+1)}\dot H^{1+\gamma}}^{q(q+1)}+\|X^N_0\|_{L_\omega^{2q(q+1)}\dot H^{1+\gamma}}^{q(q+1)}) 
\|({\rm Id} - \PP_N)X^M_0\|_{L_\omega^2 \dot H^{-2 \alpha}} \\ 
& \le  C (1+t_m^{-\alpha}) (1+\|X_0\|_{1+\gamma}^{q(q+1)+1}) \lambda_N^{-\frac{1+\gamma}{2}-\alpha}.
\end{align*}
For the second term, using the continuous interpolation \eqref{xhat}, write
\begin{align*}
|\ee[U_{t_m}^M(X_0^N)-U_0^M(X_m^N)]|
&\le |\ee[U_{t_m}^M(X_0^N)-U_{t_m-t_1}^M(X_1^N)]|\\
&\quad+\sum_{i=1}^{m-1} |\ee[U_{t_m-t_{i+1}}^M(\hat X^N(t_{i+1})) -U_{t_m-t_i}^M(\hat X^N(t_i))]|.
\end{align*}
For $1\le i\le m-1$, It\^o formula, \eqref{kol}, and \eqref{xhat} give
\begin{align*}
& \ee [U_{t_m-t_{i+1}}^M(\hat X^N(t_{i+1}))-U_{t_m-t_{i}}^M(\hat X^N(t_{i}))] \\
&= \ee \int_{t_i}^{t_{i+1}} \Big[-\partial_t U_{t_m-t}^M(\hat X^N)
+ DU_{t_m-t}^M(\hat X^N)(A_N   S_{N,\tau} X_i^N+S_{N,\tau}\PP_NF_\tau(X_i^N)) \\
& \quad + \frac12 \sum_{k \in \nn_+} D^2U_{t_m-t}^M(\hat X^N)(S_{N,\tau}\PP_NG(X_i^N)Q^{\frac12} g_k, S_{N,\tau}\PP_NG(X_i^N)Q^{\frac12} g_k)\Big] \dd t
=: \sum_{j=1}^3 I_j^i, 
\end{align*}
where 
\begin{align*}
I_1^i & := \ee \int_{t_i}^{t_{i+1}}DU_{t_m-t}^M(\hat X^N) (A_N  S_{N,\tau} X_i^N-A  \hat X^N)\dd t, \\
I_2^i & := \ee \int_{t_i}^{t_{i+1}}DU_{t_m-t}^M(\hat X^N)( S_{N,\tau}\PP_NF_\tau(X_i^N)-\PP_MF(\hat X^N))\dd t \\
I_3^i & := \frac12 \sum_{k \in \nn_+} \ee \int_{t_i}^{t_{i+1}} [D^2U_{t_m-t}^M(\hat X^N) (S_{N,\tau}\PP_NG(X_i^N)Q^{\frac12} g_k, S_{N,\tau}\PP_NG(X_i^N)Q^{\frac12} g_k)\\
& \qquad  \qquad - D^2U_{t_m-t}^M(\hat X^N) (\PP_MG(\hat X^N)Q^{\frac12} g_k, \PP_MG(\hat X^N)Q^{\frac12} g_k)] \dd t.
\end{align*}
We estimate $I_1^i$, $I_2^i$, and $I_3^i$ successively.
To this end, let us denote 
\begin{align} \label{phi-a}
\phi_\alpha(t):=(1+t^{-\alpha})e^{-ct}, \quad \alpha\in[0,1),~ t>0.
\end{align}

\begin{lm} \label{lm-u}
Let Assumptions~\ref{ap-f}--\ref{ap-G} hold with $\lambda_1>K_3+\Lambda_{\rm w}$. For any $\alpha\in[0,1/2)$, there exist constants $C>0$ and $\tau_{\max}\in(0,1)$ such that, for any $\tau\in(0,\tau_{\max}]$ and $m\ge2$,
\begin{align}
|\ee[U_{t_m}^M(X_0^N)-U_{t_m-t_1}^M(X_1^N)]|
&\le C \phi_\alpha(t_m-t_1) (1+\|X_0\|_{1+\gamma}^{(q+1)^3})
(\lambda_N^{-\frac{1+\gamma}{2}-\alpha}
+\tau^{\frac12+\alpha}).
\label{U1}
\end{align}
\end{lm}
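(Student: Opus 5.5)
The plan is to use the Markov property of $U^M$. Set $s:=t_m-t_1\ge t_1$, so that $U^M_{t_m}=P^M_{t_1}U^M_s$, where $P^M$ is the Galerkin semigroup. Write $Y:=X^M(t_1,X_0^N)$ for the Galerkin solution started from $X_0^N$, and compare it with the one-step GTEM output $X_1^N$. Then
\begin{align*}
\ee[U_{t_m}^M(X_0^N)-U_{s}^M(X_1^N)]=\ee[U_s^M(Y)-U_s^M(X_1^N)].
\end{align*}
Because $s\ge t_1>0$, all the smoothing of Corollary \ref{cor-DU} is available with the weight $\phi_\alpha(s)$. This lets us avoid the singular $\tau$-integral near time $t_m$ that the first step would otherwise produce. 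We expand $U_s^M(Y)-U_s^M(X_1^N)$ to second order around $X_1^N$:
\begin{align*}
DU_s^M(X_1^N)(Y-X_1^N)+\int_0^1(1-\theta)D^2U_s^M(\Xi_\theta)(Y-X_1^N,Y-X_1^N)\,\dd\theta .
\end{align*}
Here $\Xi_\theta$ is a convex combination of $Y$ and $X_1^N$. Its $\dot H^{1+\gamma}$ moments are controlled by Lemma \ref{lm-xn} and Proposition \ref{prop-h1+}. So \eqref{phi}, applied pointwise through $\Psi_s^{(j)}$, contributes the polynomial factor in $\|X_0\|_{1+\gamma}$.

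The quadratic term is bounded with \eqref{D2u} ($\vartheta_1=\vartheta_2=0$), giving $C\phi_{\kappa+\delta}(s)\,\ee\|Y-X_1^N\|^2$. Since $s\ge\tau$ and $\alpha<1/2$, one can dominate $\phi_{\kappa+\delta}(s)$ by $C\phi_\alpha(s)$ after absorbing the difference into the rate. Alternatively, use \eqref{D2u} with $\vartheta_1=\vartheta_2=\alpha/2$ and negative norms. A one-step strong estimate gives $\|Y-X_1^N\|_{L^4_\omega}\le C(\tau^{1/2+\gamma/2\wedge\ldots}+\lambda_N^{-(1+\gamma)/2})$. We obtain it by writing both processes in mild form on $[0,t_1]$ and comparing three parts. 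The first is $S_M(t_1)X_0^N-S_{N,\tau}X_0^N$, of size $O(\tau^{(1+\gamma)/2})$ since $X_0^N\in\dot H^{1+\gamma}$. The second is the drift terms, of size $O(\tau)$ by \eqref{F-bounds-0}. The third is the noise terms, whose difference is $\int_0^{t_1}[S_M(t_1-r)\PP_MG(X^M(r))-S_{N,\tau}\PP_NG(X_0^N)]\dd W$. By \eqref{G1} and the Lipschitz bound it has size $O(\tau^{1/2}(\tau^{1/2}+\lambda_N^{-1/2}))$. Hence the quadratic term is $O(\tau+\lambda_N^{-1-\gamma})$, which is admissible because $1/2+\alpha<1$.

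The linear term $\ee[DU_s^M(X_1^N)(Y-X_1^N)]$ is the main obstacle. The stochastic part of $Y-X_1^N$ is only $O(\tau^{1/2})$ in $L^2_\omega$ and must be turned into $O(\tau^{1/2+\alpha})$. The deterministic parts are handled with \eqref{Du} in the $\dot H^{-2\alpha}$ norm. For the linear part, $\|(S_M(t_1)-S_{N,\tau})X_0^N\|_{-2\alpha}\le C\tau^{(1+\gamma)/2+\alpha}\|X_0\|_{1+\gamma}$, and the leftover $\PP_M-\PP_N$ pieces give $\lambda_N^{-(1+\gamma)/2-\alpha}$. For the drift part, $\int_0^{t_1}S_M(t_1-r)\PP_MF(X^M(r))\dd r-\tau S_{N,\tau}\PP_NF_\tau(X_0^N)$ splits into three errors. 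The taming error is $O(\tau^2)$ by \eqref{f-ftau}, the error from freezing the argument is $O(\tau^{3/2})$ by \eqref{holder}-type continuity of $X^M$, and the semigroup error is $O(\tau^{1+\alpha})$. For the stochastic part, note that $DU_s^M(X_1^N)$ is not independent of the increment $\delta_0W$. We therefore apply the Malliavin IBP \eqref{ibp} with $Z=D U_s^M(X_1^N)$, regarded as a $V_M$-valued variable. This gives
\begin{align*}
\ee\int_0^{t_1}\big\langle \DD_r[DU_s^M(X_1^N)],\,S_M(t_1-r)\PP_MG(X^M(r))-S_{N,\tau}\PP_NG(X_0^N)\big\rangle_{\LL_2^0}\dd r .
\end{align*}
By the chain rule \eqref{chain}, $\DD_r[DU_s^M(X_1^N)]=D^2U_s^M(X_1^N)(\DD_rX_1^N,\cdot)$, and \eqref{DM-p} gives $\DD_rX_1^N=S_{N,\tau}\PP_NG(X_0^N)$. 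We then bound this with \eqref{D2u}, with $\vartheta_1=0$ on the $\DD_rX_1^N$ slot and $\vartheta_2=\alpha$ on the difference of integrands. The difference has size $O(\tau^{1/2}+\lambda_N^{-1/2})$ in $\dot H^{-2\alpha}$, with an extra $\tau^{\alpha}$ from $S_M(t_1-r)-S_{N,\tau}$ in $\dot H^{-2\alpha}$ via \eqref{G1}. Integrating over $r\in[0,t_1]$ yields $\tau\cdot(\tau^{\alpha}+\ldots)\le C(\tau^{1/2+\alpha}+\lambda_N^{-(1+\gamma)/2-\alpha})$, with the weight $\phi$ coming from the time $s$. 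The delicate point is keeping the singular exponent at most $\alpha$. If \eqref{D2u} with $\vartheta_2=\alpha$ forces the exponent to $\alpha+\kappa+\delta$, I would instead use the mixed bound \eqref{D2u-mix}: put $\vartheta=\alpha$ on the difference and $\chi$ on $\DD_rX_1^N$, using Corollary \ref{cor-mall-theta}. Collecting all the moment exponents then gives at most $(q+1)^3$, hence \eqref{U1}.
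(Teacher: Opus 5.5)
The step that fails is your treatment of the quadratic remainder. Estimate \eqref{D2u} with $\vartheta_1=\vartheta_2=0$ carries the weight $\phi_{\kappa_{d,\varrho}+\delta}(s)$, not $\phi_\alpha(s)$. It cannot be traded for $\phi_\alpha(s)$ when $\alpha<\kappa_{d,\varrho}+\delta$, for instance $\alpha=0$, or any $\alpha<\kappa_{d,\varrho}$ when $d=2,3$. To see this, take $m=2$, so that $s=\tau$. Your bound $C\phi_{\kappa_{d,\varrho}+\delta}(s)(\tau+\lambda_N^{-1-\gamma})$ then contains $\tau^{-\kappa_{d,\varrho}-\delta}\lambda_N^{-1-\gamma}$, which comes from the high modes $({\rm Id}-\PP_N)Y$. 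The right-hand side of \eqref{U1} is only of order $\tau^{-\alpha}\lambda_N^{-(1+\gamma)/2-\alpha}+\tau^{1/2}$. For fixed $N$ the ratio blows up as $\tau\to0$, so ``absorbing the difference into the rate'' works for the temporal part but not the spatial one. Your alternative $\vartheta_1=\vartheta_2=\alpha/2$ only makes the singularity worse. Within your scheme, one repair is \eqref{D2u-mix} with $\vartheta=0$, which has a nonsingular weight, paired with a one-step bound on $\|Y-X_1^N\|_{L^4_\omega\dot H^\chi}$.

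The detour comes from a misdiagnosis. The stochastic part of $Y-X_1^N$ is not $O(\tau^{1/2})$, because the leading noise terms $\approx G(X_0^N)\delta_0W$ cancel. It\^o's isometry then gives $O(\tau+\tau^{1/2}\lambda_N^{-1/2-\alpha})$ in $L^2_\omega\dot H^{-2\alpha}$, which is essentially what you computed yourself for the quadratic term. So the one-step strong error is already of the required size in $\dot H^{-2\alpha}$, and the paper argues at first order only:
\begin{itemize}
\item it applies the mean value theorem with \eqref{Du}, which gives exactly the weight $\phi_\alpha$ and involves only $\Psi^{(1)}$;
\item it splits $Y-X_1^N=({\rm Id}-\PP_N)Y+(\PP_NY-X_1^N)$;
\item it bounds the first piece by \eqref{pn} together with the $\dot H^{1+\gamma}$-moments;
\item it bounds the second piece by the one-step estimate \eqref{one-err}, whose noise pieces $J_5$ and $J_6$ are $O(\tau^{1+\alpha})$ and $O(\tau)$.
\end{itemize}
Your Malliavin IBP treatment of the linear term is sound (it gives $O(\tau\,\phi_\alpha(s))$) but unnecessary. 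Dropping the second-order expansion removes both it and the problematic remainder. A minor point: $\|(e^{\tau A_N}-S_{N,\tau})X_0^N\|_{-2\alpha}$ is only $O(\tau^{\min\{(1+\gamma)/2+\alpha,1\}})$, since \eqref{sn+} requires $\beta\le1$; this still suffices.
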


\begin{proof}
For convenience, we denote $Z_\theta:=\theta X^M(t_1,X_0^N)+(1-\theta)X_1^N$, $\theta\in[0,1]$.
The Markov property, mean value theorem, \eqref{Du}, and \eqref{phi} yield
\begin{align*}
&|\ee[U_{t_m}^M(X_0^N)-U_{t_m-t_1}^M(X_1^N)]|\\
&=|\ee U_{t_m-t_1}^M(X^M(t_1,X_0^N))-\ee U_{t_m-t_1}^M(X_1^N)|\\
&\le\int_0^1\ee|DU_{t_m-t_1}^M(Z_\theta)(X^M(t_1,X_0^N)-X_1^N)|\dd\theta\\
&\le C\phi_\alpha(t_m-t_1)
\left(1+\sup_{0\le\theta\le1}\|Z_\theta\|_{L_\omega^{2q(q+1)}\dot H^{1+\gamma}}^{q(q+1)}\right)
\|X^M(t_1,X_0^N)-X_1^N\|_{L_\omega^2\dot H^{-2\alpha}}\\
&\le C\phi_\alpha(t_m-t_1)
\left(1+\|X^M(t_1,X_0^N)\|_{L_\omega^{2q(q+1)}\dot H^{1+\gamma}}^{q(q+1)}
+\|X_1^N\|_{L_\omega^{2q(q+1)}\dot H^{1+\gamma}}^{q(q+1)}\right)\\
&\quad\times\left(
\|({\rm Id}-\PP_N)X^M(t_1,X_0^N)\|_{L_\omega^2\dot H^{-2\alpha}}
+\|\PP_NX^M(t_1,X_0^N)-X_1^N\|_{L_\omega^2\dot H^{-2\alpha}}
\right).
\end{align*}
Then we conclude \eqref{U1} by \eqref{xn}, \eqref{xnm-1+}, \eqref{one-err}, and \eqref{pn}.
\end{proof}

\begin{lm} \label{lm-I1}
Let Assumptions \ref{ap-f}-\ref{ap-G} hold with $\lambda_1 > K_3 +\Lambda_{\rm w}$. 
There exist constants $C>0$ and $\tau_{\max}\in(0,1)$ such that for any $\tau\in(0,\tau_{\max}]$ and $m \ge 2$,
\begin{align} \label{I1-est}
\sum_{i=1}^{m-1}|I_1^i|
\le C(1+t_m^{-\varrho})
(1+\|X_0\|_{1+\gamma}^{4q(q+1)^2+1})
\tau^{\frac12+\varrho-\varepsilon}.
\end{align}
\end{lm}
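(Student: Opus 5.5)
The plan is to rewrite the integrand of $I_1^i$ using the interpolation \eqref{xhat}. On $[t_i,t_{i+1}]$ we have
\begin{align*}
\hat X^N(t)=S_{N,\tau}X_i^N+\tau^{-1}(t-t_i)\tau S_{N,\tau}\PP_NF_\tau(X_i^N)+\cdots .
\end{align*}
More usefully, since $A\hat X^N=A_N\hat X^N$ on $V_N$, we get
\begin{align*}
A_NS_{N,\tau}X_i^N-A_N\hat X^N(t)
&=-(t-t_i)A_N^2S_{N,\tau}X_i^N-(t-t_i)A_NS_{N,\tau}\PP_NF_\tau(X_i^N)\\
&\quad-A_NS_{N,\tau}\PP_NG(X_i^N)(W(t)-W(t_i)).
\end{align*}
Here we used $\hat X^N(t)-S_{N,\tau}X_i^N=\hat X^N(t)-X_i^N+(X_i^N-S_{N,\tau}X_i^N)$ and $X_i^N-S_{N,\tau}X_i^N=-\tau A_NS_{N,\tau}X_i^N$. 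This gives the decomposition $I_1^i=I_{11}^i+I_{12}^i+I_{13}^i$, in which $I_{11}^i$ and $I_{12}^i$ are deterministic-increment terms of size $O(\tau)$ times a high power of $A_N$, and $I_{13}^i$ is the stochastic term.

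For $I_{11}^i$ and $I_{12}^i$ we would use \eqref{Du} with $\alpha=\frac12+\varrho-\varepsilon$ is not allowed, since $\alpha<1/2$. We therefore split the smoothing: $\|(t-t_i)A_N^2S_{N,\tau}X_i^N\|_{-2\alpha}\le C\tau\cdot\tau^{-(1-\gamma)/2-(1-2\alpha)/2}\cdot\|X_i^N\|_{1+\gamma}$, using the discrete smoothing estimate \eqref{smo-pn} for $S_{N,\tau}$. With $\alpha=\frac12-\varepsilon'$ close to $1/2$ this yields $\tau^{1/2+\ldots}$. This alone only gives order about $1/2$, however. To reach $\frac12+\varrho-\varepsilon$, we would pair this with the second-order information. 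We write $DU_{t_m-t}^M(\hat X^N(t))=DU^M_{t_m-t}(X_i^N)+[DU^M_{t_m-t}(\hat X^N(t))-DU^M_{t_m-t}(X_i^N)]$ and handle the difference with \eqref{D2u} and the H\"older estimate \eqref{holder}. The main part with $X_i^N$ frozen is then $\ee\,DU(X_i^N)(\ldots)$, where the stochastic term $I_{13}$ vanishes by the martingale property. For the deterministic parts we would instead exploit $\int_{t_i}^{t_{i+1}}(\cdot)$ and compare $DU_{t_m-t}$ to $DU_{t_m-t_i}$ through $\partial_tU$, but that is more delicate. The simpler route is to use $\|A_N S_{N,\tau}\|$ interpolation, $\|\tau A_N^2S_{N,\tau}x\|_{-2\alpha}\le C\tau^{\frac12+\alpha-\ldots}\|x\|_{1}$, choosing $\alpha$ close to $1/2$. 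The $\varrho$ surplus would come from applying \eqref{Du} on the $\dot H^{-2\alpha}$ side combined with the fact that $DU$ extends to negative-order spaces via \eqref{D2u}-type smoothing in the difference term.

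Concretely, for the stochastic term $I_{13}^i$ we would use Malliavin IBP \eqref{ibp}, since $W(t)-W(t_i)$ is not integrable against $A_N$. Setting $Z:=DU^M_{t_m-t}(\hat X^N(t))$ and $\Gamma(r)=A_NS_{N,\tau}\PP_NG(X_i^N)\mathbf 1_{[t_i,t]}(r)$, the chain rule \eqref{chain} gives
\begin{align*}
I_{13}^i=-\ee\int_{t_i}^{t_{i+1}}\int_{t_i}^{t}\sum_l D^2U^M_{t_m-t}(\hat X^N(t))\big(\DD_r^l\hat X^N(t),A_NS_{N,\tau}\PP_NG(X_i^N)\widetilde g_l\big)\dd r\dd t .
\end{align*}
This is then bounded via \eqref{D2u-mix} or \eqref{D2u}, placing $A_NS_{N,\tau}G$ in $\dot H^{-2\vartheta}$ with $\vartheta=\frac12-\varepsilon$ using \eqref{G1} and Corollary~\ref{cor-mall-theta}, and placing $\DD_r\hat X^N$ in $\dot H^\chi$ (or in $H$ with \eqref{D2u} and $\vartheta_2=0$ when $\gamma=0$) via Lemma~\ref{lm-mall-p}/Corollary~\ref{cor-mall-theta}. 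The double integral contributes $\tau^2$, and the operator norms contribute $\tau^{-(1-2\vartheta)/2}\cdot\tau^{-1/2+\ldots}$. The net effect is $\tau^{1+\varrho-\varepsilon}$ locally, after using \eqref{D2u} with $\vartheta_1+\vartheta_2+\kappa_{d,\varrho}<1/2$ and the $\dot H^{-2\varrho}$ gain. The moment factors are controlled by \eqref{phi} with $r_2=4q-1$, which gives the power $4q(q+1)^2+1$ after H\"older and Proposition~\ref{prop-h1+}.

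Finally, we sum over $i$. Each local bound carries $\phi_\beta(t_m-t)$ with $\beta<1$, and $\sum_i\int_{t_i}^{t_{i+1}}\phi_\beta(t_m-t)\dd t\le C$ uniformly in $m$ by the exponential factor, which makes the estimate UIT. The last interval, with its singularity $(t_m-t)^{-\beta}$, is handled by choosing $\beta<1$. The factor $1+t_m^{-\varrho}$ arises from the $i=1$ term, where $\hat X^N$ near $t_1$ only has $\dot H^{1+\gamma}$ regularity. The main obstacle is achieving the extra $\varrho$ beyond $1/2$ in the deterministic terms $I_{11},I_{12}$. We would first try moving $\tau^{\varrho}$ worth of smoothing onto $DU$ through \eqref{D2u} applied to the difference $DU(\hat X^N(t))-DU(X_i^N)$, and writing the frozen part through $\DD$-IBP as above.
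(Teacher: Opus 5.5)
There is a genuine gap, and it is the one you yourself flag as the ``main obstacle'': the terms containing $A_N^2S_{N,\tau}X_i^N$ are never controlled at the rate $\tau^{\frac12+\varrho-\varepsilon}$.

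Your decomposition also drops a term. Since $S_{N,\tau}-{\rm Id}=\tau A_NS_{N,\tau}$, the correct identity is
\begin{align*}
A_NS_{N,\tau}X_i^N-A_N\hat X^N(t)
&=(t_{i+1}-t)A_N^2S_{N,\tau}X_i^N-(t-t_i)A_NS_{N,\tau}\PP_NF_\tau(X_i^N)\\
&\quad-A_NS_{N,\tau}\PP_NG(X_i^N)(W(t)-W(t_i)).
\end{align*}
So the piece $\tau A_N^2S_{N,\tau}X_i^N$ (the paper's $I_{11}^i$) is missing, though structurally it matches your $(t-t_i)$-term.

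The bounds you propose for these terms do not hold.
\begin{itemize}
\item Measuring $\tau A_N^2S_{N,\tau}x$ in $\dot H^{-2\alpha}$ with $x\in\dot H^{1+\gamma}$ leaves $(-A)^{3/2-\alpha-\gamma/2}$ for a single resolvent step. That is possible uniformly in $N$ only when $\alpha+\gamma/2\ge\frac12$, and then it gives $\tau^{\alpha+\gamma/2-1/2}$, i.e.\ essentially $\tau^{\gamma/2}$, not $\tau^{1/2}$.
\item Likewise, $\|\tau A_N^2S_{N,\tau}x\|_{-2\alpha}\le C\tau^{\frac12+\alpha}\|x\|_1$ is false for $\alpha<\frac12$. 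One step of $S_{N,\tau}$ absorbs at most one power of $-A_N$.
\item Freezing $DU$ at $X_i^N$ does not help. The increment $\tau A_N^2S_{N,\tau}X_i^N$ is $\FFF_{t_i}$-measurable, so the frozen part has no martingale cancellation. It also contains no explicit stochastic integral for \eqref{ibp} to act on; exposing one is precisely the missing step.
\end{itemize}

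The missing idea is to use the smoothing accumulated along the history of $X_i^N$. The paper inserts the discrete mild formula \eqref{full-sum} into $\tau(-A_N)^2S_{N,\tau}X_i^N$. This produces $\tau(-A_N)^2S_{N,\tau}^{i+1}X_0^N$ plus discrete convolutions of $(-A_N)^2S_{N,\tau}^{i+1-j}$ against $\tau\PP_NF_\tau(X_j^N)$ and $\PP_NG(X_j^N)\delta_jW$.

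Testing with \eqref{Du} at $\alpha=\varrho$, the power $(-A)^{3/2-\varrho}$ is split in two:
\begin{itemize}
\item $(-A)^{\frac12-\varrho+\varepsilon}$ goes on one resolvent step, at cost $\tau^{-(\frac12-\varrho+\varepsilon)}$;
\item $(-A)^{1-\varepsilon}$ goes on $S_{N,\tau}^{i-j}$, giving an integrable singularity $t_{i-j}^{-1+\varepsilon}$ that is summed by \eqref{disc-sum}.
\end{itemize}
This split is the source of the $\varepsilon$-loss. The weight $t_i^{-1+\varepsilon}$ from the initial-datum piece is what yields the factor $1+t_m^{-\varrho}$ via \eqref{ele-int}; it does not come from the regularity of $\hat X^N$ near $t_1$.

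For the stochastic convolution, It\^o isometry would square the singularity $(t_i-[s]_\tau)^{-1+\varepsilon}$ and lose half an order. The paper therefore uses It\^o isometry with \eqref{noise-smo-est} only on the distant history $[0,a_i]$, $a_i=(t_i-1)_+$. On $[a_i,t_i]$ it applies the Malliavin IBP \eqref{ibp} with \eqref{D2u-mix} and \eqref{mall-est-local-p}/\eqref{mall-est-local-theta}. There the singularity appears only to the first power and is integrated by \eqref{disc}.

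Your IBP treatment of the current increment $W(t)-W(t_i)$ is a legitimate alternative to the paper's martingale-plus-mean-value argument for that term. After summation it gives order $\tau^{1/2+\varrho}$ or better, not ``$\tau^{1+\varrho-\varepsilon}$ locally'' as you state. Without the history expansion, however, the proposal does not establish \eqref{I1-est}.
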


\begin{proof}
Let $1 \le i \le m-1$.
The definition \eqref{xhat} of $\hat X$ leads to 
\begin{align*}
	I_1^i & = \int_{t_i}^{t_{i+1}}\ee [DU_{t_m-t}^M(\hat X^N)(A_N [S_{N,\tau} - {\rm Id} ] X_i^N)] \dd t \\
	& \quad - \int_{t_i}^{t_{i+1}}\ee [DU_{t_m-t}^M(\hat X^N) ([t-t_i] A_N ^2S_{N,\tau} X_i^N)] \dd t \\
	&\quad - \int_{t_i}^{t_{i+1}}\ee [DU_{t_m-t}^M(\hat X^N) ([t-t_i] A_N  S_{N,\tau} \PP_N F_\tau(X_i^N))] \dd t \\
	&\quad - \int_{t_i}^{t_{i+1}}\ee [DU_{t_m-t}^M(\hat X^N) (A_N S_{N,\tau} \PP_N G(X_i^N) [W(t)-W(t_i)])] \dd t 
	=: \sum_{j=1}^4 I_{1j}^i.
\end{align*}

\emph{Estimate of $I_{11}^i$.}
Using the identity $S_{N,\tau} - {\rm Id}  = \tau A_N S_{N,\tau}$ and \eqref{full-sum}, we can decompose the first term $I_{11}^i$ into three terms:
\begin{align*}
    I_{11}^i
    &= \tau \int_{t_i}^{t_{i+1}} \ee [DU_{t_m-t}^M(\hat X^N) ((-A_N)^2 S_{N,\tau}^{i+1} X^N_0)] \dd t  \\
    &\quad + \tau^2 \int_{t_i}^{t_{i+1}} \ee [DU_{t_m-t}^M(\hat X^N) ((-A_N)^2 \sum_{j=0}^{i-1} S_{N,\tau}^{i+1-j} \PP_NF_\tau(X^N_j) )] \dd t  \\
    &\quad + \tau \int_{t_i}^{t_{i+1}} \ee [DU_{t_m-t}^M(\hat X^N)  ((-A_N)^2 \sum_{j=0}^{i-1} S_{N,\tau}^{i+1-j} \PP_NG(X^N_j)\delta_j W)] \dd t 
    =: \sum_{j=1}^3 I_{11j}^i.
\end{align*} 
By Corollary~\ref{cor-DU} and a distribution of fractional powers of $-A$, we obtain
\begin{align*}
|I_{111}^i|+|I_{112}^i|
& \le C\tau \int_{t_i}^{t_{i+1}} \phi_\varrho(t_m-t) \|(-A)^{\frac12-\varrho+\varepsilon}S_{N,\tau}\|_{\LL}  \\
&\quad \times \Big( \|(-A)^{1-\varepsilon}S_{N,\tau}^{i}\|_{\LL}
\ee [\Psi_{t_m-t}^{(1)}(\hat X^N)\|X^N_0\|_1]  \\
&\qquad\quad
+\tau\sum_{j=0}^{i-1}
\|(-A)^{1-\varepsilon}S_{N,\tau}^{i-j}\|_{\LL}
\ee[\Psi_{t_m-t}^{(1)}(\hat X^N)\|F_\tau(X_j^N)\|_1] \Big) \dd t .
\end{align*}
Then, using \eqref{smo-pn}, \eqref{phi}, \eqref{F-bounds-0}, H\"older inequality, \eqref{xhat-est}, \eqref{xnm-1+}, and \eqref{disc-sum}, we obtain
\begin{align*}
|I_{111}^i|+|I_{112}^i|
& \le C\tau^{1/2+\varrho-\varepsilon}
(1+\|X_0\|_{1+\gamma}^{(q+1)^3})  
 \int_{t_i}^{t_{i+1}}
\phi_\varrho(t_m-t)
\big(1+t_i^{-1+\varepsilon}e^{-ct_i}\big)\dd t .
\end{align*}
Next, we estimate $I_{113}^i$ by a Malliavin argument.
Let $a_i:=(t_i-1)_+$.
Since
$$\sum_{j=0}^{i-1}S_{N,\tau}^{i+1-j}\PP_NG(X^N_j)\delta W_j
= \int_0^{t_i}S_{N,\tau}^{i+1-\ell(s)}\PP_NG(X_{\ell(s)}^N) \dd W(s),$$ 
we split $I_{113}^i$ as 
\begin{align*}
	|I_{113}^i|
	& \le C \tau \Big| \int_{t_i}^{t_{i+1}}\ee \Big[DU_{t_m-t}^M(\hat X^N) \Big( (-A_N)^2 \int_{0}^{a_i}\PP_N S_{N,\tau}^{i+1-\ell(s)}G(X_{\ell(s)}^N) \dd W(s) \Big) \Big] \dd t\Big|\\
	&\quad+C\tau\Big| \int_{t_i}^{t_{i+1}}\ee \Big[DU_{t_m-t}^M(\hat X^N) \Big((-A_N)^2 \int_{a_i}^{t_i}\PP_N S_{N,\tau}^{i+1-\ell(s)}G(X_{\ell(s)}^N) \dd W(s)\Big) \Big] \dd t\Big|\\
	&=:I_{1131}^i+I_{1132}^i.
\end{align*}
By Cauchy--Schwarz inequality, It\^o isometry, \eqref{Du} with
$\alpha=0$, \eqref{phi}, \eqref{G1}, and \eqref{noise-smo-est}, we get
\begin{align*}
I_{1131}^i
& \le  C\tau \int_{t_i}^{t_{i+1}}\Big( \int_{0}^{a_i}\ee\|(-A_N)^2 \PP_N S_{N,\tau}^{i+1-\ell(s)}G(X_{\ell(s)}^N)\|_{\LL_2^0}^2  \dd s\Big)^{\frac12}\\
&\quad \times e^{- c (t_m-t)}\|\Psi_{t_m-t}^{(1)}(\hat X^N)\|_{L_\omega^2}\dd t\\
& \le  C \tau (1+\|X_0\|_{1+\gamma}^{q(q+1)^2}) \int_{t_i}^{t_{i+1}}e^{- c (t_m-t)}\\
&\quad \times\Big( \int_{0}^{a_i} \|(-A_N)^{\frac32} \PP_N S_{N,\tau}^{i+1-\ell(s)}\|_\LL^2 
	\|G(X_{\ell(s)}^N)\|_{L_\omega^2 \LL_2^1}^2 \dd s\Big)^{\frac12}  \dd t\\ 
& \le C\tau(1+\|X_0\|_{1+\gamma}^{(q+1)^3}) \int_{t_i}^{t_{i+1}}e^{-c(t_m-t)} \dd t .
\end{align*}
Applying the Malliavin IBP formula  \eqref{ibp} and \eqref{D2u-mix} with $\vartheta=\varrho$, we obtain
\begin{align*}
&\quad I_{1132}^i\\
&\le C\tau \int_{t_i}^{t_{i+1}} \int_{a_i}^{t_i} \sum_{l \in \nn_+}\ee|D^2U_{t_m-t}^M(\hat X^N) (\DD_s^l\hat X^N,(-A_N)^2S_{N,\tau}^{i+1-\ell(s)} \PP_NG(X_{\ell(s)}^N)\widetilde g_l)|\dd s\dd t \\
&\le C\tau \int_{t_i}^{t_{i+1}}\phi_\varrho(t_m-t) \int_{a_i}^{t_i}\ee[\Psi_{t_m-t}^{(2)}(\hat X^N) \|\DD_s\hat X^N\|_{\LL_2^\chi} \|(-A)^{1-\varepsilon}S_{N,\tau}^{i-\ell(s)}\|_{\LL} \\
&\qquad\times \|(-A)^{1/2-\varrho+\varepsilon}\PP_NS_{N,\tau}\|_{\LL} \|G(X_{\ell(s)}^N)\|_{\LL_2^1}]\dd s\dd t .
\end{align*}
Using \eqref{smo-pn}, H\"older  inequality, \eqref{G1}, and \eqref{disc}, together with \eqref{mall-est-local-p} with $p=2$ for $\gamma=\chi=0$ and \eqref{mall-est-local-theta} with $p=2$ and $\theta=\chi$ for $\gamma>0$, we conclude that
\begin{align*}
I_{1132}^i
&\le C\tau^{\frac12+\varrho-\varepsilon} \int_{t_i}^{t_{i+1}}\phi_\varrho(t_m-t) \int_{a_i}^{t_i}(t_i-[s]_\tau)^{-1+\varepsilon} e^{-c(t_i-[s]_\tau)} \|\Psi_{t_m-t}^{(2)}(\hat X^N)\|_{L_\omega^2} \\ 
&\qquad\times \|\DD_s\hat X^N\|_{L_\omega^4\LL_2^\chi} \|G(X_{\ell(s)}^N)\|_{L_\omega^4\LL_2^1}\dd s\dd t \\
&\le C\tau^{\frac12+\varrho-\varepsilon} (1+\|X_0\|_{1+\gamma}^{4q(q+1)^2}) \int_{t_i}^{t_{i+1}}\phi_\varrho(t_m-t)\dd t .
\end{align*}
The above analysis, together with \eqref{ele-int}, yields 
\begin{align} \label{i11}
	\sum_{i=1}^{m-1}|I_{11}^i| \le C(1+t_m^{-\varrho}) \tau^{\frac12+\varrho-\varepsilon} (1+\|X_0\|_{1+\gamma}^{4q(q+1)^2}). 
\end{align}  

\emph{Estimate of $I_{12}^i$, $I_{13}^i$, and $I_{14}^i$.}
The term $I_{12}^i$ has the same structure as $I_{11}^i$, up to the additional factor $t-t_i\le\tau$. Thus, repeating the preceding argument gives
\begin{align}\label{i12}
\sum_{i=1}^{m-1}|I_{12}^i|
\le C(1+t_m^{-\varrho})\tau^{\frac12+\varrho-\varepsilon} (1+\|X_0\|_{1+\gamma}^{4q(q+1)^2}).
\end{align} 
For $I_{13}^i$, by \eqref{Du}, a distribution of fractional powers of $-A$ and \eqref{smo-pn}, we have
\begin{align*}
|I_{13}^i|
& \le C\tau \int_{t_i}^{t_{i+1}} \phi_\varrho(t_m-t) \|(-A)^{\frac12-\varrho}S_{N,\tau}\|_{\LL}
\ee [\Psi_{t_m-t}^{(1)}(\hat X^N)\|F_\tau(X_i^N)\|_1] \dd t  \\
& \le C\tau^{\frac12+\varrho}
 \int_{t_i}^{t_{i+1}}
\phi_\varrho(t_m-t)
\|\Psi_{t_m-t}^{(1)}(\hat X^N)\|_{L_\omega^2}
\|F_\tau(X_i^N)\|_{L_\omega^2\dot H^1} \dd t .
\end{align*}
Then H\"older inequality, \eqref{phi}, \eqref{xhat-est}, \eqref{F-bounds-0}, and \eqref{xnm-1+} yield
\begin{align}
\sum_{i=1}^{m-1}|I_{13}^i|
\le C\tau^{\frac12+\varrho}
 (1+\|X_0\|_{1+\gamma}^{(q+1)^3} ).
\label{i13}
\end{align}

Before estimating the last term $I_{14}^i$, we introduce $Z_\theta^i := \theta \hat X^N(t) + (1-\theta)X_i^N$ for $\theta \in [0, 1]$.
Using the martingale property and the mean value theorem, we can rewrite $I_{14}^i$ as  
\begin{align*}
	I_{14}^i 
&=\ee \int_{t_i}^{t_{i+1}} \int_0^1 D^2U_{t_m-t}^M(Z_\theta^i) 
(A_N S_{N,\tau} \PP_N G(X_i^N) [W(t)-W(t_i)], \hat X^N-  X_i^N)\dd \theta \dd t.
\end{align*} 
By \eqref{D2u-mix} and H\"older inequality, we have
\begin{align*}
|I_{14}^i|
& \le C \int_{t_i}^{t_{i+1}} \int_0^1\phi_\varrho(t_m-t)
\ee[\Psi_{t_m-t}^{(2)}(Z_\theta^i)\|\hat X^N- X_i^N\|_\chi \\
&\qquad \times \|(-A)^{1-\varrho}S_{N,\tau}\PP_NG(X_i^N)[W(t)-W(t_i)]\|]\dd\theta\dd t\\
& \le C \int_{t_i}^{t_{i+1}}\phi_\varrho(t_m-t)
 \int_0^1 \|\Psi_{t_m-t}^{(2)}(Z_\theta^i)\|_{L_\omega^2}
\|\hat X^N-X_i^N\|_{L_\omega^4\dot H^\chi}\\
&\qquad \times
\|(-A)^{1-\varrho}S_{N,\tau}\PP_NG(X_i^N)[W(t)-W(t_i)]\|_{L_\omega^4L_\xi^2}
\dd\theta\dd t .
\end{align*}
The BDG inequality, \eqref{smo-pn}, and \eqref{G1} yield
\begin{align*}
& \|(-A)^{1-\varrho}S_{N,\tau} \PP_N G(X_i^N) [W(t)-W(t_i)]\|_{L_\omega^4 L_\xi^2} 
\le C\tau^\varrho(1+\|X_i^N\|_{L_\omega^4 \dot H^1}).
\end{align*}
Hence, by the UIT estimates \eqref{xhat-est} and \eqref{xnm-1+}, we obtain  
\begin{align} \label{i14}
    \sum_{i=1}^{m-1}|I_{14}^i| \le C \tau^{\frac12+\varrho}(1+\|X_0\|_{1+\gamma}^{4q(q+1)^2+1}).
\end{align}
Then we conclude \eqref{I1-est} by the estimates \eqref{i11}, \eqref{i12}, \eqref{i13}, and \eqref{i14}.
\end{proof}

\begin{lm} \label{lm-I2}
Let Assumptions \ref{ap-f}-\ref{ap-G} hold with $\lambda_1 > K_3 +\Lambda_{\rm w}$. 
There exist constants $C>0$ and $\tau_{\max}\in(0,1)$ such that for any $\tau\in(0,\tau_{\max}]$,
\begin{align} \label{I2-est}
\sup_{m \ge2} \sum_{i=1}^{m-1} |I_2^i| 
\le C (1+\|X_0\|_{1+\gamma}^{4q(q+1)^2+1})
(\lambda_N^{-\frac{1+\gamma}{2}-\varrho}+\tau^{\frac12+\varrho}).
\end{align}
\end{lm}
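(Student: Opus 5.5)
We split the drift defect in $I_2^i$ into four pieces, mirroring the spatial, temporal-projection, taming and time-regularity errors:
\begin{align*}
S_{N,\tau}\PP_NF_\tau(X_i^N)-\PP_MF(\hat X^N)
&=(S_{N,\tau}-{\rm Id})\PP_NF_\tau(X_i^N)+\PP_N[F_\tau(X_i^N)-F(X_i^N)]\\
&\quad+\PP_N[F(X_i^N)-F(\hat X^N)]-(\PP_M-\PP_N)F(\hat X^N),
\end{align*}
and denote the corresponding contributions by $I_{21}^i,\dots,I_{24}^i$.

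\emph{The three easy terms.}
\begin{itemize}
\item For $I_{21}^i$, we use $S_{N,\tau}-{\rm Id}=\tau A_NS_{N,\tau}$. Then \eqref{Du} with $\alpha=\varrho$ and $\|(-A)^{1/2-\varrho}S_{N,\tau}\|_\LL\le C\tau^{\varrho-1/2}$ give a bound $C\tau^{1/2+\varrho}\int_{t_i}^{t_{i+1}}\phi_\varrho(t_m-t)\,\ee[\Psi^{(1)}\|F_\tau(X_i^N)\|_1]\dd t$. This is handled by \eqref{F-bounds-0}, \eqref{phi}, \eqref{xhat-est} and \eqref{xnm-1+}, exactly as for $I_{13}^i$.
\item For $I_{22}^i$, \eqref{Du} with $\alpha=0$ and \eqref{f-ftau} give order $\tau$, using the moment bounds and $\dot H^1\hookrightarrow L_\xi^{2(3q+1)}$ (valid for the admissible $q,d$).
\item For $I_{24}^i$, \eqref{Du} with $\alpha=\varrho$ and $\|({\rm Id}-\PP_N)v\|_{-2\varrho}\le\lambda_N^{-\frac{1+\gamma}2-\varrho}\|v\|_{1+\gamma}$ give the spatial rate. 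Here $v=\PP_MF(\hat X^N)$, whose $\dot H^{1+\gamma}$ norm is controlled (an analogue of \eqref{F-bounds-0} at level $1+\gamma$, using $\dot H^{1+\gamma}\hookrightarrow L^\infty_\xi$ and \eqref{xhat-est}). If only $\dot H^1$ control is available, we instead take $\alpha$ slightly smaller. In every case the $dt$-integrals are summed via $\sum_i\int_{t_i}^{t_{i+1}}\phi_\varrho(t_m-t)\dd t\le C$, which holds because $\varrho<1/2$; this gives the $m$-uniformity.
\end{itemize}

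\emph{The main term $I_{23}^i$.} This is the main obstacle, since $\hat X^N(t)-X_i^N$ is only of order $\tau^{1/2}$ strongly. We Taylor expand:
\[
F(\hat X^N)-F(X_i^N)=DF(X_i^N)(\hat X^N-X_i^N)+\int_0^1(1-\theta)D^2F(Z_\theta^i)(\hat X^N-X_i^N,\hat X^N-X_i^N)\dd\theta .
\]
\begin{itemize}
\item The second-order remainder is bounded through \eqref{Du} with $\alpha=\varrho$ and \eqref{D2F-d} with $(\mu_1,\mu_2)$ chosen so that $\mu_1+\mu_2+2\varrho>d/2$. The $\dot H^{\mu_j}$ Hölder bound \eqref{holder} (with $\nu=\mu_j\le\gamma$, or $\nu=0$ in $d=1$) then yields order $\tau$.
\item In the first-order part we insert the increment from \eqref{xhat}. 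Its deterministic pieces $(t-t_i)[A_NS_{N,\tau}X_i^N+S_{N,\tau}\PP_NF_\tau(X_i^N)]$ give $\tau^{1/2+\varrho}$ after distributing $(-A)^{1/2-\varrho}$, as in $I_{12}^i$ and $I_{13}^i$.
\item The stochastic piece $DU^M_{t_m-t}(\hat X^N)\bigl(\PP_NDF(X_i^N)S_{N,\tau}\PP_NG(X_i^N)(W(t)-W(t_i))\bigr)$ has vanishing conditional mean only when $DU$ is frozen at $X_i^N$. We therefore treat it either by a second mean-value expansion in $\hat X^N-X_i^N$, as was done for $I_{14}^i$, or equivalently by the Malliavin IBP \eqref{ibp}, which moves the stochastic integral onto $\DD_s\hat X^N$ for $s\in[t_i,t]$.
\end{itemize}

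\emph{The stochastic piece in detail.} This produces a term
\[
D^2U^M_{t_m-t}\bigl(\DD_s^l\hat X^N,\ \PP_NDF(X_i^N)S_{N,\tau}\PP_NG(X_i^N)\widetilde g_l\bigr).
\]
There is also the term in which $\DD_s$ hits $DF(X_i^N)$; this vanishes because $X_i^N$ is $\FFF_{t_i}$-measurable and $s>t_i$. We bound the $D^2U$ term by \eqref{D2u-mix} with $\vartheta=\varrho$, placing the $\dot H^\chi$ argument on $\DD_s\hat X^N$. The needed inputs are Lemma~\ref{lm-mall-p} and Corollary~\ref{cor-mall-theta}, together with \eqref{F-bounds-1} and the bound $\|h\|_{-2\varrho}\le C\|h\|$. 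Integrating over $s\in[t_i,t]$ gives a factor $\tau$, so each interval contributes $C\tau\int_{t_i}^{t_{i+1}}\phi_\varrho(t_m-t)\dd t$. Collecting the moment powers through \eqref{phi} and Hölder's inequality, the worst power comes from $\Psi^{(2)}$ multiplied by the Malliavin and $G$ moments, namely $4q(q+1)^2+1$. Summing over $i$ then gives \eqref{I2-est}.
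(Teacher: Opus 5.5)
Your decomposition into $I_{21}^i,\dots,I_{24}^i$ and your Taylor expansion of $I_{23}^i$ match the paper. However, there is a genuine gap in the piece you call deterministic:
\[
-\ee\int_{t_i}^{t_{i+1}}(t-t_i)\,DU^M_{t_m-t}(\hat X^N)\bigl(\PP_N DF(X_i^N)A_NS_{N,\tau}X_i^N\bigr)\,\dd t .
\]
It cannot be handled by distributing $(-A)^{1/2-\varrho}$ as in $I_{13}^i$. The multiplication operator $DF(X_i^N)$ sits between the test norm in \eqref{Du} and $A_NS_{N,\tau}X_i^N$. The only available bound, \eqref{F-bounds-1}, is from $H$ to $H$, so nothing can be gained from the $\dot H^{-2\varrho}$-norm, and you must control $\|A_NS_{N,\tau}X_i^N\|$ itself.

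The bound $\|A_NS_{N,\tau}X_i^N\|\le\|(-A)^{1/2-\varrho}S_{N,\tau}\|_{\LL}\|X_i^N\|_{1+2\varrho}$ would need moments of $\|X_i^N\|_{1+2\varrho}$. These are not available in general, e.g. when $\gamma=0$ in $d=1$, or when $\gamma<2\varrho$ because $\rho$ is close to $1$. With \eqref{xnm-1+} alone you get only $\|A_NS_{N,\tau}X_i^N\|\le C\tau^{-(1-\gamma)/2}\|X_i^N\|_{1+\gamma}$. After summation this gives $\tau^{(1+\gamma)/2}$, i.e.\ $\tau^{1/2}$ for $\gamma=0$, which falls short of $\tau^{1/2+\varrho}$. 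Note also that in Lemma~\ref{lm-I1}, $I_{12}^i$ is not treated by distributing powers; it is treated exactly like $I_{11}^i$, through \eqref{full-sum} and a Malliavin IBP.

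The missing step is to expand $X_i^N$ by \eqref{full-sum}:
\[
A_NS_{N,\tau}X_i^N=A_NS_{N,\tau}^{i+1}X_0^N+\tau\sum_{j=0}^{i-1}A_NS_{N,\tau}^{i+1-j}\PP_NF_\tau(X_j^N)+\int_0^{t_i}\Gamma_i(s)\,\dd W(s).
\]
The first part carries only the integrable singularity $t_{i+1}^{-1/2}$, and the second is controlled by \eqref{smo-pn} and \eqref{disc-sum}. The stochastic convolution is the delicate part: by \eqref{Gamma_L2+} its integrand satisfies only $\|\Gamma_i(s)\|_{\LL_2^0}\lesssim(t_{i+1}-[s]_\tau)^{-1/2}$. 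This is where the paper uses the Malliavin IBP, which produces the terms
\[
D^2U^M_{t_m-t}(\hat X^N)\bigl(\DD_s^l\hat X^N,\PP_NDF(X_i^N)\Gamma_i^l(s)\bigr),\qquad DU^M_{t_m-t}(\hat X^N)\bigl(\PP_ND^2F(X_i^N)(\DD_s^lX_i^N,\Gamma_i^l(s))\bigr).
\]
These are bounded via \eqref{mall-est-local-p}, \eqref{DM-pointwise-p}, \eqref{Gamma_L2+} and \eqref{disc}. A direct BDG bound would also suffice here, losing only a factor $\sqrt{\log(1/\tau)}$ on top of $\tau$.

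By contrast, the fresh-increment term with $W(t)-W(t_i)$, which you identified as the main obstacle, is routine. Your IBP treatment of it is correct. The paper simply removes the $\FFF_{t_i}$-measurable part $DU^M_{t_m-t}(X_i^N)$ and applies \eqref{D2u} to two factors of size $\tau^{1/2}$.

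A minor point: in $I_{22}^i$ the embedding $\dot H^1\hookrightarrow L_\xi^{2(3q+1)}=L_\xi^{14}$ fails for $d=3$. Use $\dot H^{1+\gamma}\hookrightarrow L_\xi^\infty$ instead.
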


\begin{proof}
For $I_2^i$, we split it into four terms:
\begin{align*}
    I_2^i
    &= \ee \int_{t_i}^{t_{i+1}} DU_{t_m-t}^M(\hat X^N) ( [S_{N,\tau} - {\rm Id} ] \PP_N F_\tau (X_i^N) ) \dd t  \\
    &\quad + \ee \int_{t_i}^{t_{i+1}} DU_{t_m-t}^M(\hat X^N) ( \PP_N [F_\tau (X_i^N) - F (X_i^N)] ) \dd t \\
    &\quad + \ee \int_{t_i}^{t_{i+1}} DU_{t_m-t}^M(\hat X^N) ( \PP_N [F (X_i^N) - F(\hat X^N)] ) \dd t \\
    &\quad + \ee \int_{t_i}^{t_{i+1}} DU_{t_m-t}^M(\hat X^N) ( \PP_N F(\hat X^N) - \PP_M F(\hat X^N) ) \dd t 
        =: \sum_{j=1}^4 I_{2j}^i.
\end{align*}

\emph{Estimate of $I_{21}^i$, $I_{22}^i$, and $I_{24}^i$.}
For the term $I_{21}^i$, by \eqref{Du}, H\"older inequality, the estimates \eqref{phi}, \eqref{F-bounds-0}, \eqref{xnm-1+}, \eqref{sn}, and \eqref{ele-int}, we have
\begin{align}\label{i21}
    \sum_{i=1}^{m-1}|I_{21}^i|
    & \le C \tau^{\frac12+\varrho} \sum_{i=1}^{m-1} \int_{t_i}^{t_{i+1}} \phi_\varrho(t_m-t)
        \ee [ \Psi_{t_m-t}^{(1)}(\hat X^N)  \|F_\tau (X_i^N) \|_1 ] \dd t \notag\\
    & \le C \tau^{\frac12+\varrho} (1+\|X_0\|_{1+\gamma}^{(q+1)^3}).
\end{align}
For $I_{22}^i$, by \eqref{Du}, H\"older inequality, \eqref{phi}, \eqref{f-ftau}, the Sobolev embedding $\dot H^{1+\gamma}\hookrightarrow L_\xi^{6q+2}$, and \eqref{xnm-1+}, we derive
\begin{align}\label{i22}
    \sum_{i=1}^{m-1}|I_{22}^i|
    & \le C\tau \sum_{i=1}^{m-1} \int_{t_i}^{t_{i+1}} e^{- c (t_m-t)} \ee [ \Psi_{t_m-t}^{(1)}(\hat X^N) ( 1+\|X_i^N\|_{L_\xi^{6q+2}}^{3q+1} )] \dd t\notag\\
    & \le C\tau (1+\|X_0\|_{1+\gamma}^{(q+1)(q+2)^2}).
\end{align} 
For $I_{24}^i$, since $\dot H^{1+\gamma}$ is an algebra for
$\gamma$ satisfying \eqref{gamma}, Assumption~\ref{ap-f} gives
$\|F(v)\|_{1+\gamma} 
\le C(1+\|v\|_{1+\gamma}^{q+1})$,
$v\in\dot H^{1+\gamma}.$
Thus, by \eqref{Du}, \eqref{pn}, H\"older's inequality,
\eqref{phi}, and \eqref{xhat-est},
\begin{align}\label{i24}
\sum_{i=1}^{m-1}|I_{24}^i|
&\le C\lambda_N^{-\frac{1+\gamma}{2}-\varrho}
\sum_{i=1}^{m-1}\int_{t_i}^{t_{i+1}}
\phi_\varrho(t_m-t)
\ee[\Psi_{t_m-t}^{(1)}(\hat X^N)
(1+\|\hat X^N\|_{1+\gamma}^{q+1})]\dd t \notag\\
&\le C(1+\|X_0\|_{1+\gamma}^{(q+1)^3})
\lambda_N^{-\frac{1+\gamma}{2}-\varrho}.
\end{align}
\emph{Estimate of $I_{23}^i$.}
For $I_{23}^i$, Taylor formula yields
\begin{align*}
    I_{23}^i &= -\ee \int_{t_i}^{t_{i+1}} (t-t_i) DU_{t_m-t}^M(\hat X^N) ( \PP_N DF(X_i^N) A_N S_{N,\tau} X_i^N ) \dd t \\
    &\quad - \ee \int_{t_i}^{t_{i+1}} (t-t_i) DU_{t_m-t}^M(\hat X^N) ( \PP_N DF(X_i^N) S_{N,\tau} \PP_N F_\tau(X_i^N) ) \dd t \\
    &\quad - \ee \int_{t_i}^{t_{i+1}} DU_{t_m-t}^M(\hat X^N) \Big( \PP_N DF(X_i^N) \int_{t_i}^{t} S_{N,\tau}\PP_N G(X_i^N) \dd W(s) \Big) \dd t \\
    &\quad - \ee \int_{t_i}^{t_{i+1}} DU_{t_m-t}^M(\hat X^N) \Big( \PP_N \int_{0}^1 (1-\theta) D^2F(Z_\theta^i) (\hat X^N-X_i^N, \hat X^N-X_i^N) \dd \theta \Big) \dd t \\
    &=: \sum_{j=1}^4I_{23j}^i.
\end{align*}
Using \eqref{full-sum}, we further split $I_{231}^i = T_1^i + T_2^i + T_3^i$ as follows: 
\begin{align*}
    I_{231}^i &= -\ee \int_{t_i}^{t_{i+1}} (t-t_i) DU_{t_m-t}^M(\hat X^N) ( \PP_N DF(X_i^N) A_N S_{N,\tau}^{i+1} X^N_0 ) \dd t \\
    &\quad - \tau \ee \int_{t_i}^{t_{i+1}} (t-t_i) DU_{t_m-t}^M(\hat X^N) \Big( \PP_N DF(X_i^N) \sum_{j=0}^{i-1} A_N S_{N,\tau}^{i+1-j} \PP_N F_\tau(X^N_j) \Big) \dd t \\
   &\quad - \ee \int_{t_i}^{t_{i+1}} (t-t_i) DU_{t_m-t}^M(\hat X^N) \Big( \PP_N DF(X_i^N) \sum_{j=0}^{i-1} A_N S_{N,\tau}^{i+1-j} \PP_N G(X^N_j) \delta W_j \Big) \dd t.
\end{align*}
Utilizing \eqref{Du}, H\"older inequality, \eqref{F-bounds-1}, the embedding \eqref{gamma}, \eqref{xhat-est}, \eqref{xnm-1+}, the property \eqref{smo-pn}, and the estimate \eqref{ele-int}, $|T_1^i|$ can be bounded as
\begin{align*}
	|T_1^i|
	& \le  C\tau \int_{t_i}^{t_{i+1}} e^{- c (t_m-t)} \|(-A)^{1/2}S_{N,\tau}^{i+1}\|_\LL
	\ee[\Psi_{t_m-t}^{(1)}(\hat X^N) (1+\|X_i^N\|_{1+\gamma}^q)  \|X^N_0\|_1]\dd t\\
	& \le  C\tau t_{i+1}^{-\frac12} e^{-ct_{i+1}} \int_{t_i}^{t_{i+1}} e^{- c (t_m-t)}
	\|\Psi_{t_m-t}^{(1)}(\hat X^N)\|_{L_\omega^2}
	(1+\|X_i^N\|_{L^{2q}_\omega\dot H^{1+\gamma}}^q) \|X_0\|_1 \dd t\\
	& \le  C\tau (1+\|X_0\|_{1+\gamma}^{(q+1)^3}) t_{i+1}^{-\frac12} e^{-ct_{i+1}} 
	 \int_{t_i}^{t_{i+1}} e^{-c(t_m-t)} \dd t.
\end{align*}
Analogously, by \eqref{disc-sum} we obtain
\begin{align*}
	|T_2^i|
	& \le  C\tau^2 \int_{t_i}^{t_{i+1}} e^{- c (t_m-t)}
	\sum_{j=0}^{i-1}\|(-A)^{1/2+\varrho}S_{N,\tau}^{i-j}\|_\LL \|(-A)^{1/2-\varrho} S_{N,\tau}\|_\LL\\
	&\quad \times \ee[\Psi_{t_m-t}^{(1)}(\hat X^N ) (1+\|X_i^N\|_{1+\gamma}^q) \|F_\tau(X^N_j)\|]\dd t\\ 
	& \le  C\tau^{\frac12+\varrho}(1+\|X_0\|_{1+\gamma}^{(q+1)^3}) \int_{t_i}^{t_{i+1}} e^{- c(t_m-t)}\dd t. 
\end{align*}
For $T_3^i$, we argue as in the estimate of  $I_{113}^i$. 
Set
\begin{align*}
\Gamma_i(s):=A_NS_{N,\tau}^{i+1-\ell(s)}\PP_N G(X_{\ell(s)}^N)\mathbf1_{[0,t_i]}(s), 
\quad \Gamma_i^l(s):=\Gamma_i(s)\widetilde g_l,
\quad s \ge 0.
\end{align*} 
Then
\begin{align*}
	\sum_{j=0}^{i-1}A_NS_{N,\tau}^{i+1-j}\PP_N G(X^N_j)\delta W_j
= \int_0^{t_i}\Gamma_i(s)\dd W(s).
\end{align*} 
Moreover, \eqref{G1} and \eqref{smo-pn} imply, for $p>1$ and $\nu \in [0,1]$,
\begin{align}\label{Gamma_L2+}
\|\Gamma_i(s)\|_{L^p(\Omega;\LL_2^{\nu})}
& \le
C\|(-A)^{\frac{1+\nu}2}S_{N,\tau}^{i+1-\ell(s)}\|_{\LL}
\|G(X_{\ell(s)}^N)\|_{L^p(\Omega;\LL_2^1)}\notag \\
& \le C (1+\|X_{\ell(s)}^N\|_1) (t_{i+1}-[s]_\tau)^{-\frac{1+\nu}{2}} e^{-c(t_{i+1}-[s]_\tau)} .
\end{align}
By the Malliavin IBP formula \eqref{ibp} and the chain rule \eqref{chain}, we have
\begin{align*}
T_3^i 
&= - \int_{t_i}^{t_{i+1}}(t-t_i) \int_0^{t_i}\sum_{l \in \nn_+}\ee[ D^2U_{t_m-t}^M(\hat X^N)(\DD_s^l \hat X^N,\PP_N DF(X_i^N)\Gamma_i^l(s))]\dd s\dd t\\
&\quad - \int_{t_i}^{t_{i+1}}(t-t_i) \int_0^{t_i}\sum_{l \in \nn_+}\ee[ DU_{t_m-t}^M(\hat X^N)(\PP_N D^2F(X_i^N)(\DD_s^lX_i^N,\Gamma_i^l(s)))]\dd s\dd t\\
&=: T_{31}^i+T_{32}^i.
\end{align*}
For $T_{31}^i$, by \eqref{D2u} with
$(\vartheta_1,\vartheta_2)=(0,0)$ and H\"older inequality, we have
\begin{align*}
|T_{31}^i|
&\le C\tau \int_{t_i}^{t_{i+1}} \phi_{\kappa_{d,\varrho}+\delta}(t_m-t) \int_0^{t_i}\|\Psi_{t_m-t}^{(2)}(\hat X^N)\|_{L_\omega^2} \|\DD_s\hat X^N\|_{L_\omega^4\LL_2^0}\\
&\qquad\times \|DF(X_i^N)\Gamma_i(s)\|_{L_\omega^4\LL_2^0}\,\dd s\dd t .
\end{align*}
Moreover, by \eqref{F-bounds-1}, \eqref{gamma}, H\"older inequality, \eqref{Gamma_L2+}, and \eqref{xnm-1+}, we have
\begin{align*}
\|DF(X_i^N)\Gamma_i(s)\|_{L_\omega^4\LL_2^0} 
&\le C (1+\|X_i^N\|_{L_\omega^{8q}\dot H^{1+\gamma}}^q ) (1+\|X_{\ell(s)}^N\|_{L_\omega^8\dot H^1} )\\
&\qquad\times (t_{i+1}-[s]_\tau)^{-1/2} e^{-c(t_{i+1}-[s]_\tau)}.
\end{align*}
Hence, by \eqref{phi}, \eqref{xhat-est}, \eqref{mall-est-local-p},
\eqref{xnm-1+}, and \eqref{disc},
\begin{align*}
|T_{31}^i|
&\le C\tau (1+\|X_0\|_{1+\gamma}^{4q(q+1)^2+1} ) \int_{t_i}^{t_{i+1}}\phi_{\kappa_{d,\varrho}+\delta}(t_m-t) \dd t . 
\end{align*}
For $T_{32}^i$, choose $\nu \in [0,\min\{1,d/2\})$ such that $\nu+2\varrho>d/2$. By \eqref{Du}, \eqref{D2F-d}, \eqref{gamma}, and H\"older inequality,
\begin{align*}
|T_{32}^i|
&\le C\tau \int_{t_i}^{t_{i+1}}\phi_\varrho(t_m-t)
 \int_0^{t_i}\|\Psi_{t_m-t}^{(1)}(\hat X^N)\|_{L_\omega^4}
\left(1+\|X_i^N\|_{L_\omega^{8(q-1)}
\dot H^{1+\gamma}}^{q-1}\right)\\
&\qquad\times
\|\DD_sX_i^N\|_{L_\omega^2\LL_2^0}
\|\Gamma_i(s)\|_{L_\omega^8\LL_2^\nu} \dd s\dd t .
\end{align*}
Using \eqref{phi}, \eqref{xhat-est}, \eqref{DM-pointwise-p}, \eqref{Gamma_L2+}, \eqref{xnm-1+}, and \eqref{disc} with exponent $(1-\nu)/2$, we obtain
\begin{align*}
|T_{32}^i|
&\le C\tau
\left(1+\|X_0\|_{1+\gamma}^{(q+1)^3}\right)
 \int_{t_i}^{t_{i+1}}\phi_\varrho(t_m-t)\,\dd t . 
\end{align*}
Combining these two estimates with the estimates of $T_1^i$ and $T_2^i$, and using \eqref{ele-int}, we get
\begin{align*}
\sum_{i=1}^{m-1}|I_{231}^i|
&\le C\tau^{1/2+\varrho}
(1+\|X_0\|_{1+\gamma}^{4q(q+1)^2+1}).
\end{align*}
For $I_{232}^i$, by \eqref{Du} with $\alpha=0$, H\"older inequality,
\eqref{F-bounds-1}, \eqref{gamma}, \eqref{f-grow}, \eqref{phi},
\eqref{xhat-est}, and \eqref{xnm-1+}, we obtain
\begin{align*}
\sum_{i=1}^{m-1}|I_{232}^i|
&\le C\tau\sum_{i=1}^{m-1} \int_{t_i}^{t_{i+1}}e^{-c(t_m-t)} \|\Psi_{t_m-t}^{(1)}(\hat X^N)\|_{L_\omega^4} (1+\|X_i^N\|_{L_\omega^{4q}\dot H^{1+\gamma}}^q) \notag\\
&\qquad\times (1+\|X_i^N\|_{L_\omega^{2q+2}}^{q+1})\dd t \notag\\
&\le C\tau(1+\|X_0\|_{1+\gamma}^{(q+1)^3}).
\end{align*} 
For $I_{233}^i$, the term with $DU_{t_m-t}^M(X_i^N)$ vanishes after conditioning on $\FFF_{t_i}$. Hence, the mean value formula, \eqref{D2u} with $(\vartheta_1,\vartheta_2)=(0,0)$, H\"older inequality, \eqref{F-bounds-1}, and \eqref{gamma} yield
\begin{align*}
|I_{233}^i| 
& \le C \int_{t_i}^{t_{i+1}} \int_0^1 \phi_{\kappa_{d,\varrho}+\delta}(t_m-t) \|\Psi_{t_m-t}^{(2)}(Z_\theta^i)\|_{L_\omega^2} \|\hat X^N- X_i^N\|_{L_\omega^4 L_\xi^2}  \\ 
	 &\quad (1+\|X_i^N\|_{L_\omega^{8q}\dot H^{1+\gamma}}^q) \| \int_{t_i}^{t} S_{N,\tau}\PP_NG(X_i^N) \dd W(s)\|_{L_\omega^8 L_\xi^2} \dd \theta\dd t .
\end{align*}
By the BDG inequality, \eqref{G-lip}, \eqref{phi}, \eqref{xhat-est},
\eqref{holder}, and \eqref{xnm-1+}, we infer
\begin{align*}
\sum_{i=1}^{m-1}|I_{233}^i|
&\le C\tau(1+\|X_0\|_{1+\gamma}^{4q(q+1)^2+1}).
\end{align*}
For the term $I_{234}^i$, by \eqref{Du} with $\alpha=\varrho$, H\"older inequality, and \eqref{D2F-d} with $(\mu_1,\mu_2)=(0,\chi)$, we obtain
\begin{align*}
|I_{234}^i|
&\le C \int_{t_i}^{t_{i+1}}\phi_\varrho(t_m-t) (1+\sup_{0\le\theta\le1} \|Z_\theta^i\|_{L_\omega^{4(q-1)}\dot H^{1+\gamma}}^{q-1})\\
&\quad\times
\|\Psi_{t_m-t}^{(1)}(\hat X^N)\|_{L_\omega^4} \|\hat X^N-X_i^N\|_{L_\omega^4} \|\hat X^N-X_i^N\|_{L_\omega^4\dot H^\chi}\dd t .
\end{align*}
By \eqref{holder} with $p=4$, $\nu=0$ and $\nu=\chi$, respectively,
\begin{align*}
\|\hat X^N-X_i^N\|_{L_\omega^4} \|\hat X^N-X_i^N\|_{L_\omega^4\dot H^\chi}
&\le C(t-t_i)(1+\|X_0\|_{1+\gamma}^{2(q+1)^2}).
\end{align*}
Combining this with \eqref{phi}, \eqref{xhat-est}, \eqref{xnm-1+}, and \eqref{ele-int}, we get
\begin{align*}
\sum_{i=1}^{m-1}|I_{234}^i|
&\le C\tau(1+\|X_0\|_{1+\gamma}^{4q(q+1)^2}).
\end{align*}
Combining the estimates for $I_{231}^i$--$I_{234}^i$, we obtain
\begin{align*}
\sum_{i=1}^{m-1}|I_{23}^i|
&\le C\tau^{\frac12+\varrho} (1+\|X_0\|_{1+\gamma}^{4q(q+1)^2+1}).
\end{align*}
Together with \eqref{i21}, \eqref{i22}, and \eqref{i24}, this proves \eqref{I2-est}.
\end{proof}

\begin{lm} \label{lm-I3} 
Let Assumptions \ref{ap-f}-\ref{ap-G} hold with $\lambda_1 > K_3 +\Lambda_{\rm w}$. 
There exist constants $C>0$ and $\tau_{\max}\in(0,1)$ such that for any $\tau\in(0,\tau_{\max}]$,
\begin{align} \label{I3-est}
  \sup_{m \ge 2}   \sum_{i=1}^{m-1} |I_3^i| \le C
    (\lambda_N^{-\frac{1+\gamma}{2}-\varrho} + \tau^{\frac12+\varrho} )
    (1+\|X_0\|_{1+\gamma}^{(8q-1)(q+1)^2}).
\end{align}
\end{lm}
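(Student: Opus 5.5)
We estimate the diffusion defect $I_3^i$ by writing the difference of the two quadratic forms as a telescoping sum and treating each piece with \eqref{D2u}, \eqref{D2u-mix} or \eqref{D3u}. Abbreviate $G_i:=G(X_i^N)$, $\hat G:=G(\hat X^N(t))$, and write $\mathrm{Tr}_Q$ for the sum over $k$ with $Q^{1/2}g_k$ inserted. We use the bilinear identity $B(a,a)-B(b,b)=B(a-b,a)+B(b,a-b)$, with $B:=D^2U^M_{t_m-t}(\hat X^N)$, applied to $a=S_{N,\tau}\PP_NG_i$ and $b=\PP_M\hat G$. The difference $a-b$ is split into four pieces:
\begin{align*}
a-b=(S_{N,\tau}-{\rm Id})\PP_NG_i+\PP_N(G_i-G(X_i^N+\Delta_i))\cdots,
\end{align*}
more precisely $a-b=J_1+J_2+J_3$ with
\begin{itemize}
\item $J_1:=(S_{N,\tau}-{\rm Id})\PP_NG_i$, the temporal part;
\item $J_2:=\PP_N(G_i-\hat G)$, the H\"older/stochastic part;
\item $J_3:=(\PP_N-\PP_M)\hat G$, the spatial part.
\end{itemize}

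\emph{Terms $J_1$ and $J_3$.} For $J_1$ we have $\|(S_{N,\tau}-{\rm Id})(-A)^{-(1/2+\varrho)}\|_\LL\le C\tau^{1/2+\varrho}$. We place $(-A)^{-\varrho}$ on the $J_1$ argument through the $\|\cdot\|_{-2\varrho}$ slot of \eqref{D2u-mix} with $\vartheta=\varrho$. The other argument $a$ or $b$ is measured in $\LL_2^\chi\subset\LL_2^1$ via \eqref{G1} or \eqref{G2}, and $J_1$ is measured through $\|G_i\|_{\LL_2^1}$. This yields $C\tau^{1/2+\varrho}\phi_\varrho(t_m-t)$ times moments. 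Those moments are handled by H\"older, \eqref{phi}, \eqref{xhat-est} and \eqref{xnm-1+}, with the Poincar\'e exponent $r_2=4q-1$ producing the factor $(1+\|X_0\|^{(4q-1)(q+1)^2+\cdots})$. For $J_3$ we use $\|({\rm Id}-\PP_N)\|_{\LL(\dot H^{1+\gamma},\dot H^{-2\varrho})}\le C\lambda_N^{-(1+\gamma)/2-\varrho}$ together with \eqref{G2}, \eqref{D2u-mix}, and the same moments. Summing over $i$ with \eqref{ele-int} then gives the first and third contributions to \eqref{I3-est}.

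\emph{Term $J_2$.} This is the main obstacle. The difference $\hat X^N(t)-X_i^N$ is only of order $\tau^{1/2}$ in $L_\omega^p$, so we cannot simply bound it in norm. We Taylor expand:
\begin{align*}
G_i-\hat G=-DG(X_i^N)(\hat X^N-X_i^N)-\int_0^1(1-\theta)D^2G(Z_\theta^i)(\hat X^N-X_i^N,\hat X^N-X_i^N)\,\dd\theta .
\end{align*}
The quadratic remainder is $O(\tau)$ by \eqref{D2G}, \eqref{holder} and \eqref{D2u} with $\vartheta_i=0$. Inserting \eqref{xhat} into the linear part produces three contributions:
\begin{itemize}
\item a drift part of size $(t-t_i)$, namely $(t-t_i)[A_NS_{N,\tau}X_i^N+S_{N,\tau}\PP_NF_\tau(X_i^N)]$;
\item a deterministic remainder $(S_{N,\tau}-{\rm Id})X_i^N$;
\item the stochastic part $S_{N,\tau}\PP_NG_i(W(t)-W(t_i))$.
\end{itemize}
The drift and deterministic remainder are treated as for $I_{231}^i$. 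This means expanding $A_NS_{N,\tau}X_i^N$ with \eqref{full-sum}, splitting the stochastic convolution at $a_i=(t_i-1)_+$, and handling the recent part by the Malliavin IBP \eqref{ibp}. Differentiating $D^2U^M(\hat X^N)(\cdot,a)$ in the Malliavin sense produces $D^3U^M(\hat X^N)(\DD_s\hat X^N,\cdot,a)$. This is controlled by \eqref{D3u}, with the $\dot H^\chi$ slot placed on $\DD_s\hat X^N$ using \eqref{mall-est-local-theta} and with $\varepsilon_1=\varrho$. The IBP also produces terms where $\DD_s$ hits $G_i$ or $\hat G$, and these are controlled by \eqref{DG} and \eqref{DM-pointwise-p}.

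\emph{Stochastic increment.} For the increment $W(t)-W(t_i)$ we note that it is independent of $\FFF_{t_i}$ and has mean zero. Therefore replacing $\hat X^N$ by $X_i^N$ in $B$ and in the other argument gives zero. The error from this replacement is handled by a further mean-value step, which produces $D^3U^M(Z_\theta^i)(\hat X^N-X_i^N,\cdot,\cdot)$ or a $D^2U^M$ term involving $D G(\cdot)(\hat X^N-X_i^N)$. Each of these has two increments of size $\tau^{1/2}$, so its total size is $O(\tau)$. This requires \eqref{D3u} with the $\chi$-slot on $\hat X^N-X_i^N$ via \eqref{holder} with $\nu=\chi$, and the exponent $r_3=8q-2$ is what produces the final $(8q-1)(q+1)^2$ power of $\|X_0\|_{1+\gamma}$. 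Every singularity is at most $\phi_{\varrho}$ or $\phi_{2\varepsilon+\kappa_{d,\varrho,\chi}+\delta}$, which is integrable, and the exponentially weighted sums \eqref{disc}, \eqref{ele-int} make the bound uniform in $m$. Collecting the pieces yields \eqref{I3-est}. We expect the delicate point to be the bookkeeping needed so that the singular exponents of $D^3U^M$ remain integrable in $d=3$.
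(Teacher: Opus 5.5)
Your argument is sound and uses the same ingredients as the paper: \eqref{D2u-mix} with $\vartheta=\varrho$ for the $(S_{N,\tau}-{\rm Id})$ and $(\PP_N-\PP_M)$ defects, a Taylor expansion of $G(\hat X^N)-G(X_i^N)$, \eqref{full-sum} plus the Malliavin IBP \eqref{ibp} for the $A_N$-drift piece, and exact vanishing of the martingale piece once everything is frozen at $X_i^N$. What differs is the bookkeeping.

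The paper first isolates $I_{31}^i$, with $G$ frozen at $X_i^N$ in both arguments. It then writes $I_{32}^i$ via $B(u,u)-B(v,v)=B(u-v,u-v)+2B(u-v,v)$ and replaces $D^2U^M_{t_m-t}(\hat X^N)$ by $D^2U^M_{t_m-t}(X_i^N)$ \emph{before} Taylor expanding. This is the split $R_1^i+R_2^i$, with $R_2^i=O(\tau)$ by \eqref{D3u} and \eqref{holder}. From then on all coefficients are $\FFF_{t_i}$-measurable, so $R_{13}^i=0$ directly. The IBP in $R_{113}^i$ then differentiates only through $X_i^N$, so it needs just \eqref{DM-pointwise-p}, and the $\dot H^\chi$-slot is put on $G(X_i^N)\widetilde g_{l'}$.

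Your single identity $B(a-b,a+b)$ keeps $D^2U^M(\hat X^N)$ and $b=\PP_MG(\hat X^N)$ inside the drift piece and freezes only in the martingale piece. This is equally valid. The price is that the IBP differentiates through $\hat X^N$, so you need \eqref{mall-est-local-p}/\eqref{mall-est-local-theta} with $p=2$ (still covered by $\Lambda_{\rm w}$), and you pick up an extra term with $DG(\hat X^N)\DD_s\hat X^N$.

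Two corrections are needed:
\begin{enumerate}
\item By \eqref{xhat}, $\hat X^N(t)-X_i^N$ has no separate $(S_{N,\tau}-{\rm Id})X_i^N$ component; that is just the drift piece evaluated at $t=t_{i+1}$.
\item In the $D^3U^M$ term coming from the IBP you may not take $\varepsilon_1=\varrho$, since Lemma~\ref{lm-DX} requires $\varepsilon_1+\varepsilon_2<\varrho-\kappa_{d,\chi}$. You do not need it. The factor $t-t_i\le\tau$ already yields $O(\tau)$, and $\|\Gamma_i(s)\|_{\LL_2^0}\lesssim(t_{i+1}-[s]_\tau)^{-1/2}$ is integrable in $s$ by \eqref{disc}. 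The direction $\PP_NDG(X_i^N)(\Gamma_i^l(s))\widetilde g_{l'}$ can only be measured in $H$ anyway. So $\varepsilon_1=\varepsilon_2=0$ is the right choice, and the split at $a_i$ is unnecessary there.
\end{enumerate}
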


\begin{proof}
We decompose $I_3^i$ as $I_3^i=:I_{31}^i+I_{32}^i$, where
\begin{align*}
I_{31}^i
&:=\frac12\ee \int_{t_i}^{t_{i+1}} \sum_{l \in \nn_+} \Big[ D^2U_{t_m-t}^M(\hat X^N)( S_{N,\tau}\PP_NG(X_i^N)\widetilde g_l, S_{N,\tau}\PP_NG(X_i^N)\widetilde g_l) \\
&\qquad - D^2U_{t_m-t}^M(\hat X^N) ( \PP_MG(X_i^N)\widetilde g_l, \PP_MG(X_i^N)\widetilde g_l )\Big]\dd t,\\
I_{32}^i
&:=-\frac12\ee \int_{t_i}^{t_{i+1}} \sum_{l \in \nn_+} \Big[ D^2U_{t_m-t}^M(\hat X^N) ( \PP_MG(\hat X^N)\widetilde g_l, \PP_MG(\hat X^N)\widetilde g_l ) \\
&\qquad - D^2U_{t_m-t}^M(\hat X^N ) ( \PP_MG(X_i^N)\widetilde g_l, \PP_MG(X_i^N)\widetilde g_l ) \Big]\dd t .
\end{align*}
By a further decomposition of $I_{31}^i$, we write
$I_{31}^i=:I_{311}^i+I_{312}^i$, where
\begin{align*}
I_{311}^i &:=\frac12\sum_{l \in \nn_+}\ee \int_{t_i}^{t_{i+1}} D^2U_{t_m-t}^M(\hat X^N) \Big( \PP_N(S_{N,\tau} - {\rm Id} )G(X_i^N)\widetilde g_l, \\
&\qquad\qquad\qquad\qquad
(S_{N,\tau}\PP_N+\PP_M)G(X_i^N)\widetilde g_l \Big)\dd t, \\
I_{312}^i &:=\frac12\sum_{l \in \nn_+}\ee \int_{t_i}^{t_{i+1}} D^2U_{t_m-t}^M(\hat X^N) \Big( \PP_M(\PP_N-{\rm Id})G(X_i^N)\widetilde g_l, \\
&\qquad\qquad\qquad\qquad
(S_{N,\tau}\PP_N+\PP_M)G(X_i^N)\widetilde g_l \Big)\dd t.
\end{align*}
By \eqref{D2u-mix} with $\vartheta=\varrho$ and H\"older inequality, we estimate $I_{31}^i$ as
\begin{align*}
|I_{31}^i|
&\le C \int_{t_i}^{t_{i+1}}\phi_\varrho(t_m-t) \ee[\|(S_{N,\tau}\PP_N+\PP_M)G(X_i^N)\|_{\LL_2^\chi} \Psi_{t_m-t}^{(2)}(\hat X^N)\\
&\qquad\times (\|(-A)^{-\varrho} (S_{N,\tau}-{\rm Id})G(X_i^N)\|_{\LL_2^0} + \|(-A)^{-\varrho} (\PP_N-{\rm Id})G(X_i^N)\|_{\LL_2^0})] \dd t\\
&\le C\|(-A)^{-(1+\gamma)/2-\varrho}(S_{N,\tau}-{\rm Id})\|_{\LL} \int_{t_i}^{t_{i+1}}\phi_\varrho(t_m-t) \|\Psi_{t_m-t}^{(2)}(\hat X^N)\|_{L_\omega^2}\\
&\qquad\times \|G(X_i^N)\|_{L_\omega^4\LL_2^{1+\gamma}}^2\dd t .
\end{align*}
%
Using \eqref{pn}, \eqref{sn}, \eqref{G1}, \eqref{G2}, \eqref{phi}, \eqref{xhat-est}, and \eqref{xnm-1+}, we obtain
\begin{align}
\sum_{i=1}^{m-1}|I_{31}^i|
&\leq C ( \tau^{1/2+\varrho} +\lambda_N^{-(1+\gamma)/2-\varrho} ) (1+\|X_0\|_{1+\gamma}^{4q(q+1)^2}).
\label{I31}
\end{align}

Next, we turn to $I_{32}^i$. Since $D^2U_{t_m-t}^M(\hat X^N)$ is
symmetric, the identity $B(u,u)-B(v,v)=B(u-v,u-v)+2B(u-v,v),~ u,v \in V_M$  for symmetric bilinear forms $B$ on $V_M$ gives
\begin{align*}
I_{32}^i
&= -\frac12\ee \int_{t_i}^{t_{i+1}}\sum_{l \in \nn_+}
D^2U_{t_m-t}^M(\hat X^N)
((\PP_MG(\hat X^N)-\PP_MG(X_i^N))\widetilde g_l,\\
&\qquad
(\PP_MG(\hat X^N)-\PP_MG(X_i^N))\widetilde g_l)\dd t\\
&\quad -\ee \int_{t_i}^{t_{i+1}}\sum_{l \in \nn_+}
D^2U_{t_m-t}^M(\hat X^N)
((\PP_MG(\hat X^N)-\PP_MG(X_i^N))\widetilde g_l,
\PP_MG(X_i^N)\widetilde g_l)\dd t\\
&=:I_{321}^i+I_{322}^i .
\end{align*}
For $I_{321}^i$, by \eqref{D2u} with $(\vartheta_1,\vartheta_2)=(0,0)$, \eqref{G-lip}, and H\"older inequality, we have
\begin{align*}
|I_{321}^i| 
& \le C \int_{t_i}^{t_{i+1}}\phi_{\kappa_{d,\varrho}+\delta}(t_m-t) \|\Psi_{t_m-t}^{(2)}(\hat X^N)\|_{L_\omega^2}
\|\hat X^N-X_i^N\|_{L_\omega^4 L_\xi^2}^2
\dd t.
\end{align*}
Using \eqref{phi}, \eqref{holder}, and \eqref{xhat-est}, we obtain
\begin{align}\label{i321}
\sum_{i=1}^{m-1}|I_{321}^i|
&\le C\tau(1+\|X_0\|_{1+\gamma}^{4q(q+1)^2}).
\end{align}
For $I_{322}^i$, we first add and subtract the term with
$D^2U_{t_m-t}^M(X_i^N)$. This gives
\begin{align*}
I_{322}^i
&= -\sum_{l \in \nn_+}\ee \int_{t_i}^{t_{i+1}}D^2U_{t_m-t}^M(X_i^N) ( \PP_MG(\hat X^N)\widetilde g_l -\PP_MG(X_i^N)\widetilde g_l, \\
&\qquad\qquad
\PP_MG(X_i^N)\widetilde g_l )\dd t \\
&\quad
-\sum_{l \in \nn_+}\ee \int_{t_i}^{t_{i+1}} ( D^2U_{t_m-t}^M(\hat X^N) - D^2U_{t_m-t}^M(X_i^N) )  \\
&\qquad\qquad
( \PP_MG(\hat X^N)\widetilde g_l -\PP_MG(X_i^N)\widetilde g_l, \PP_MG(X_i^N)\widetilde g_l )\dd t
=: R_1^i+R_2^i .
\end{align*}
To simplify the notation, for $v \in H$ we introduce
\begin{align*}
	\mathcal H_i^t(v)
:= \sum_{l \in \mathbb N^+}D^2U^M_{t_m-t}(X_i^N)(\PP_MDG(X_i^N)(v)\widetilde g_l, \PP_MG(X_i^N)\widetilde g_l).
\end{align*}
For the term $R_1^i$, by Taylor formula and \eqref{xhat}, we have
\begin{align*}
R_1^i
&= -\ee \int_{t_i}^{t_{i+1}}(t-t_i) \mathcal H_i^t(A_NS_{N,\tau}X_i^N)\dd t 
-\ee \int_{t_i}^{t_{i+1}}(t-t_i) \mathcal H_i^t(S_{N,\tau}\PP_NF_\tau(X_i^N))\dd t \\
&\quad
-\ee \int_{t_i}^{t_{i+1}}  \mathcal H_i^t \Big( \int_{t_i}^{t}S_{N,\tau}\PP_NG(X_i^N)\dd W(s)\Big ) \dd t 
-\ee \int_{t_i}^{t_{i+1}} \sum_{l \in \nn_+} D^2U_{t_m-t}^M(X_i^N) \\
&\qquad \Big( \PP_M \int_0^1 D^2G(Z_\theta^i) (\hat X^N-X_i^N,\hat X^N-X_i^N) (1-\theta)\dd\theta  \widetilde g_l, 
\PP_MG(X_i^N)\widetilde g_l \Big)\dd t \\
& =:\sum_{j=1}^4R_{1j}^i.
\end{align*}
For the term $R_{11}^i$, we decompose it in the same spirit as $I_{231}^i$.
Using \eqref{full-sum}, we have
\begin{align*}
R_{11}^i&=
-\ee \int_{t_i}^{t_{i+1}} (t-t_i) \mathcal H_i^t(A_NS_{N,\tau}^{i+1}X^N_0)\dd t\\
&\quad
-\tau \ee \int_{t_i}^{t_{i+1}} (t-t_i) \mathcal H_i^t \Big(\sum_{j=0}^{i-1}A_NS_{N,\tau}^{i+1-j}\PP_NF_\tau(X_j^N) \Big)\dd t\\
&\quad 
-\ee \int_{t_i}^{t_{i+1}} (t-t_i) \mathcal H_i^t\Big( \int_0^{t_i}\Gamma_i(s)\dd W(s)\Big)\dd t
=:\sum_{j=1}^3R_{11j}^i.
\end{align*}
By \eqref{D2u-mix} with $\vartheta=0$, \eqref{DG}, H\"older inequality, and \eqref{smo-pn}, we obtain
\begin{align*}
|R_{111}^i|
& \le C\tau \int_{t_i}^{t_{i+1}} e^{-c(t_m-t)} \|(-A_N)^\varrho S_{N,\tau}^{i}\|_{\LL} \|(-A_N)^{\frac12-\varrho}S_{N,\tau}\|_{\LL} \\
&\qquad\times
\|\Psi_{t_m-t}^{(2)}(X_i^N)\|_{L_\omega^2} \|X^N_0\|_{L_\omega^4\dot H^1} \|G(X_i^N)\|_{L_\omega^4\LL_2^1}\dd t \\
& \le C\tau^{\frac12+\varrho} (1+\|X_0\|_{1+\gamma}^{4q(q+1)^2}) \int_{t_i}^{t_{i+1}}e^{-c(t_m-t)} t_i^{-\varrho}e^{-ct_i}\dd t .
\end{align*}
For $R_{112}^i$, using \eqref{D2u}, \eqref{DG}, H\"older inequality, \eqref{smo-pn}, and \eqref{disc-sum}, we get
\begin{align*}
|R_{112}^i|
& \le C\tau^2 \int_{t_i}^{t_{i+1}} \phi_{\kappa_{d,\varrho}+\delta}(t_m-t) \sum_{j=0}^{i-1} \|(-A_N)^{1/2+\varrho}S_{N,\tau}^{i-j}\|_{\LL} \|(-A_N)^{1/2-\varrho}S_{N,\tau}\|_{\LL} \\
&\qquad\times \|\Psi_{t_m-t}^{(2)}(X_i^N)\|_{L_\omega^2} \|\PP_NF_\tau(X_j^N)\|_{L_\omega^4} \|G(X_i^N)\|_{L_\omega^4\LL_2^0}\,\dd t \\
& \le C\tau^{1/2+\varrho} (1+\|X_0\|_{1+\gamma}^{4q(q+1)^2}) \int_{t_i}^{t_{i+1}} \phi_{\kappa_{d,\varrho}+\delta}(t_m-t)\,\dd t .
\end{align*}
For $R_{113}^i$, by the Malliavin IBP formula \eqref{ibp} and the chain rule \eqref{chain}, we obtain
\begin{align*}
R_{113}^i
&= - \int_{t_i}^{t_{i+1}}(t-t_i)
 \int_0^{t_i}\sum_{l,l' \in \nn_+}
\ee\Big[ D^3U_{t_m-t}^M(X_i^N)
\Big(\DD_s^l X_i^N, \\
&\qquad\qquad
\PP_MDG(X_i^N)(\Gamma_i^l(s))\widetilde g_{l'},
\PP_MG(X_i^N)\widetilde g_{l'} \Big) \Big]\dd s\dd t \\
&\quad
- \int_{t_i}^{t_{i+1}}(t-t_i)
 \int_0^{t_i}\sum_{l,l' \in \nn_+}
\ee\Big[ D^2U_{t_m-t}^M(X_i^N)
\Big(
\PP_MD^2G(X_i^N)(\DD_s^l X_i^N, \Gamma_i^l(s))\widetilde g_{l'}, \\
&\qquad\qquad
\PP_MG(X_i^N)\widetilde g_{l'} \Big) \Big]\dd s\dd t \\
&\quad 
- \int_{t_i}^{t_{i+1}}(t-t_i)
 \int_0^{t_i}\sum_{l,l' \in \nn_+}
\ee \Big[ D^2U_{t_m-t}^M(X_i^N)
\Big( \PP_MDG(X_i^N)(\Gamma_i^l(s))\widetilde g_{l'}, \\
&\qquad\qquad
\PP_MDG(X_i^N)(\DD_s^l X_i^N)\widetilde g_{l'} \Big) \Big]\dd s\dd t
=:\sum_{j=1}^3 R_{113j}^i .
\end{align*}
For $R_{1131}^i$, using \eqref{D3u}, H\"older inequality, and \eqref{DG}, and assigning the $\dot H^\chi$-direction to
$G(X_i^N)\widetilde g_{l'}$, we obtain
\begin{align*}
|R_{1131}^i|
& \le C \int_{t_i}^{t_{i+1}}(t-t_i)
\phi_{\kappa_{d,\varrho,\chi}+\delta}(t_m-t)
\|\Psi_{t_m-t}^{(3)}(X_i^N)\|_{L_\omega^2}  \\
&\quad\times
 \int_0^{t_i}
\|\DD_sX_i^N\|_{L_\omega^4\LL_2^0}
\|\Gamma_i(s)\|_{L_\omega^8\LL_2^0}
\|G(X_i^N)\|_{L_\omega^8\LL_2^\chi}
\dd s\dd t .
\end{align*}
Combining this with \eqref{phi}, \eqref{Gamma_L2+},
\eqref{DM-pointwise-p}, \eqref{G1}, \eqref{xnm-1+}, and \eqref{disc}, we infer
\begin{align*}
|R_{1131}^i| 
& \le C\tau(1+\|X_0\|_{1+\gamma}^{(8q-1)(q+1)^2}) \int_{t_i}^{t_{i+1}} \phi_{\kappa_{d,\varrho,\chi}+\delta}(t_m-t)\dd t .
\end{align*}
Similarly, by \eqref{D2u}, \eqref{D2G}, \eqref{DG}, \eqref{DM-pointwise-p}, \eqref{Gamma_L2+}, \eqref{xnm-1+}, and \eqref{disc}, we obtain
\begin{align*}
|R_{1132}^i|+|R_{1133}^i|
& \le C\tau(1+\|X_0\|_{1+\gamma}^{4q(q+1)^2}) \int_{t_i}^{t_{i+1}} \phi_{\kappa_{d,\varrho}+\delta}(t_m-t) \dd t .
\end{align*}
Combining the above estimates with the estimates for $R_{111}^i$ and $R_{112}^i$, we have
\begin{align*}
\sum_{i=1}^{m-1}|R_{11}^i|
&\le C\tau^{\frac12+\varrho}
(1+\|X_0\|_{1+\gamma}^{(8q-1)(q+1)^2}).
\end{align*}
Similarly, for $R_{12}^i$, by \eqref{D2u-mix} with $\vartheta=0$, H\"older inequality, \eqref{phi}, \eqref{DG}, \eqref{G1}, \eqref{f-grow}, and \eqref{xnm-1+},
we obtain
\begin{align*}
\sum_{i=1}^{m-1}|R_{12}^i|
& \le C\tau \sum_{i=1}^{m-1} \int_{t_i}^{t_{i+1}} e^{-c(t_m-t)} \ee[\Psi_{t_m-t}^{(2)}(X_i^N) \|S_{N,\tau}\PP_NF_\tau(X_i^N)\| \|G(X_i^N)\|_{\LL_2^\chi}]\dd t \\
& \le C\tau (1+\|X_0\|_{1+\gamma}^{4q(q+1)^2}).
\end{align*}
For each $t \in [t_i,t_{i+1}]$, the random linear functional
$\mathcal H_i^t$ and $S_{N,\tau}\PP_NG(X_i^N)$ are
$\mathcal F_{t_i}$-measurable. Hence, by the martingale property,
\begin{align*}
R_{13}^i
&= - \int_{t_i}^{t_{i+1}} \ee \Big[ \mathcal H_i^t\Big( \int_{t_i}^{t}S_{N,\tau}\PP_NG(X_i^N) \dd W(s) \Big)\Big]\dd t  =0.
\end{align*}
Moreover, by \eqref{D2u}, H\"older inequality, \eqref{phi}, \eqref{D2G}, \eqref{G-lip}, \eqref{holder}, and \eqref{xnm-1+},
\begin{align*}
\sum_{i=1}^{m-1}|R_{14}^i|
&\le C\tau(1+\|X_0\|_{1+\gamma}^{4q(q+1)^2}).
\end{align*}
Together with the estimates for $R_{11}^i$ and $R_{12}^i$, this gives
\begin{align}\label{R1-est}
\sum_{i=1}^{m-1}|R_1^i|
&\le C\tau^{\frac12+\varrho}
(1+\|X_0\|_{1+\gamma}^{(8q-1)(q+1)^2}).
\end{align}
We next estimate $R_2^i$. By the mean value theorem, \eqref{D3u}, H\"older inequality, \eqref{G-lip}, and using \eqref{phi}, \eqref{holder}, \eqref{xhat-est}, and \eqref{xnm-1+}, we infer
\begin{align}\label{R2-est}
\sum_{i=1}^{m-1}|R_2^i|
&\le C\tau(1+\|X_0\|_{1+\gamma}^{(8q-1)(q+1)^2}).
\end{align}
Combining \eqref{R1-est} and \eqref{R2-est}, we obtain
\begin{align*}
\sum_{i=1}^{m-1}|I_{322}^i|
&\le C\tau^{\frac12+\varrho}
(1+\|X_0\|_{1+\gamma}^{(8q-1)(q+1)^2}).
\end{align*}
Together with \eqref{i321}, this gives
\begin{align}\label{I32}
\sum_{i=1}^{m-1}|I_{32}^i|
&\le C\tau^{\frac12+\varrho}
(1+\|X_0\|_{1+\gamma}^{(8q-1)(q+1)^2}).
\end{align}
Combining \eqref{I31} and \eqref{I32}, we conclude \eqref{I3-est}. 
\end{proof}

With the preceding estimates established, we are ready to prove Theorem \ref{tm-weak}.

\begin{proof}[Proof of Theorem \ref{tm-weak}]
Combining Lemmas \ref{lm-u}-\ref{lm-I3}, we conclude \eqref{weak}. 
\end{proof}

\section{Numerical Experiments}\label{sec5}
In this section, we design numerical tests to illustrate the ergodicity, as well as the sharpness and time-independence of the weak convergence rate in Theorem \ref{tm-weak} for SACE on $\mathcal O=(0,1)^2$ with homogeneous DBC:
\begin{align*}
f(\xi)=\varepsilon^{-2}(\xi-\xi^3),\qquad
g(\xi)=\sigma_1+\sigma_2\xi,\qquad
\varepsilon=0.35,\quad (\sigma_1,\sigma_2)=(2,0.2).
\end{align*}
The noise is expanded in the sine basis
\begin{align*}
e_{k,\ell}(\xi_1,\xi_2)
=2\sin(k\pi\xi_1)\sin(\ell\pi\xi_2),\qquad 
q_{k,\ell}=(1+k^2+\ell^2)^{-3},
\quad k,\ell \in \nn_+ .
\end{align*}
We use the tensor-product spectral space $V_N={\rm span}\{e_{k,\ell}:1\le k,\ell\le N\}$.
In the simulations, the $Q$-Wiener process is truncated as
$\sum_{1\le k,\ell\le N_Q}q_{k,\ell}^{1/2}e_{k,\ell}\beta_{k,\ell}(t)$, 
where $\{\beta_{k,\ell}\}$ are independent 1D Brownian motions. 

The parameters satisfy the dissipativity condition in Theorem~\ref{tm-weak}. Indeed, $\lambda_1=2\pi^2$, and the polynomial exponent in Assumption~\ref{ap-f} is $q=2$. Moreover,
\begin{align*}
(1+\tau\xi^4)f'(\xi)-2\tau\xi^3f(\xi)
&=\varepsilon^{-2}(1-3\xi^2-\tau\xi^4-\tau\xi^6)
\le\varepsilon^{-2}(1+\tau\xi^4)^{3/2},
\end{align*}
so we may take $K_3=\varepsilon^{-2}\approx8.1633$.
For the untruncated covariance operator, let
\begin{align*}
S_0:=\sum_{k,\ell\ge1}(1+k^2+\ell^2)^{-3}\approx0.05124,\quad 
S_2:=\sum_{k,\ell\ge1} \frac{k^2+\ell^2}{(1+k^2+\ell^2)^3}\approx0.19857.
\end{align*}
From the uniform bound $\|e_{k,\ell}\|_{L^ \infty}\le2$, the product rule, the Poincar\'e inequality \eqref{poin}, and Young inequality, we obtain
\begin{align*}
\|DG(u)h\|_{\LL_2^0}^2 \le4\sigma_2^2S_0\|h\|^2,\quad 
\|G(v)\|_{\LL_2^1}^2 \le K_5+1.06\sigma_2^2 (2\sqrt{S_0}+\sqrt{2S_2})^2\|v\|_1^2.
\end{align*}
Thus, we may choose $K_4^2=4\sigma_2^2S_0\approx0.00820$ and $K_6=0.05$.
Moreover, for every $\gamma\in(0,1)$, with $\lambda_{k,\ell}=\pi^2(k^2+\ell^2)$, we have $\sum_{k,\ell\ge1}q_{k,\ell}\lambda_{k,\ell}^{1+\gamma}\le C\sum_{k,\ell\ge1}(1+k^2+\ell^2)^{-2+\gamma}<\infty$. Together with the algebra property of $\dot H^{1+\gamma}$, this gives $\|G(z)\|_{\LL_2^{1+\gamma}}\le C(1+\|z\|_{1+\gamma})$, $z\in\dot H^{1+\gamma}$, so \eqref{G2} holds.
Consequently,
\begin{align*}
K_3+\Lambda_{\rm w}
=\max \Big\{ K_3+\frac{13}{2}K_4^2, K_3+\frac{251}{2}K_6 \Big\}\le14.44<2\pi^2=\lambda_1.
\end{align*}

\emph{UIT weak convergence rate.}
We use the bounded observables
\begin{align*}
\varphi_1(u)=\sin(\|u\|),\qquad
\varphi_2(u)=\exp(-\|u\|^2),\qquad
\varphi_3(u)=\cos(\|u\|^2).
\end{align*}
For fixed signs $\eta_{k,\ell} \in \{-1,1\}$, $M \in \nn_+$, and $\alpha,p>0$, set
\begin{align*}
u_{\alpha,p}^{M}
=\sum_{k,\ell=1}^{M}
\frac{\eta_{k,\ell}e_{k,\ell}}
{(k^2+\ell^2)^\alpha\log^p(e+\sqrt{k^2+\ell^2})},
\qquad
\bar u_{\alpha,p}^{M}
=\frac{u_{\alpha,p}^{M}}{\|u_{\alpha,p}^{M}\|},
\end{align*}
and take
$X_{0,\mathrm{time}}=0.8e_{1,1}+\bar u_{1.05,1.2}^{64}$.
We fix $N=32$, $N_Q=64$, $\mathcal T=\{1,10,50\}$, $\tau_{\rm ref}=2^{-12}$. The tested time steps are  $\tau \in \{2^{-6},2^{-7},2^{-8},2^{-9}\}$. 
The coarse and reference paths use the same truncated Wiener path, with coarse increments obtained by summing the corresponding fine increments. For $T \in \mathcal T$ and an observable $\varphi$, set
\begin{align*}
    E_T^\varphi(\tau)
    = \Big \lvert
    K_{\rm mc}^{-1}\sum_{r=1}^{K_{\rm mc}}
    [\varphi(X_{T/\tau}^{N,\tau,(r)})-\varphi(X_{\rm ref}^{N,\tau_{\rm ref},(r)}(T))]
    \Big\rvert.
\end{align*}
The temporal convergence figure has one panel for each $T \in \mathcal T$ and shows the three errors $E_T^{\varphi_\ell}(\tau)$, $\ell=1,2,3$. The constant in the reference line of the $T$-panel is
\begin{align*}
    C_{T,\varrho_*}^{\rm time}
    = \max_\tau
    \frac{\max_{\ell=1,2,3}E_T^{\varphi_\ell}(\tau)}
    {(1+T^{-\varrho_*})\,\tau^{\varrho_*}},
    \quad \varrho_* \in (0,1).
\end{align*}
With $K_{\rm mc}=500$, the fitted slopes are compared with the rate
$\tau^\rho$, $\rho<1$, in Theorem \ref{tm-weak}.
Figure~\ref{fig:time} shows that the temporal weak errors decay almost linearly in $\tau$ and remain stable for $T=1,10,50$.

\begin{figure}[t]
\centering
 \includegraphics[width=\textwidth]{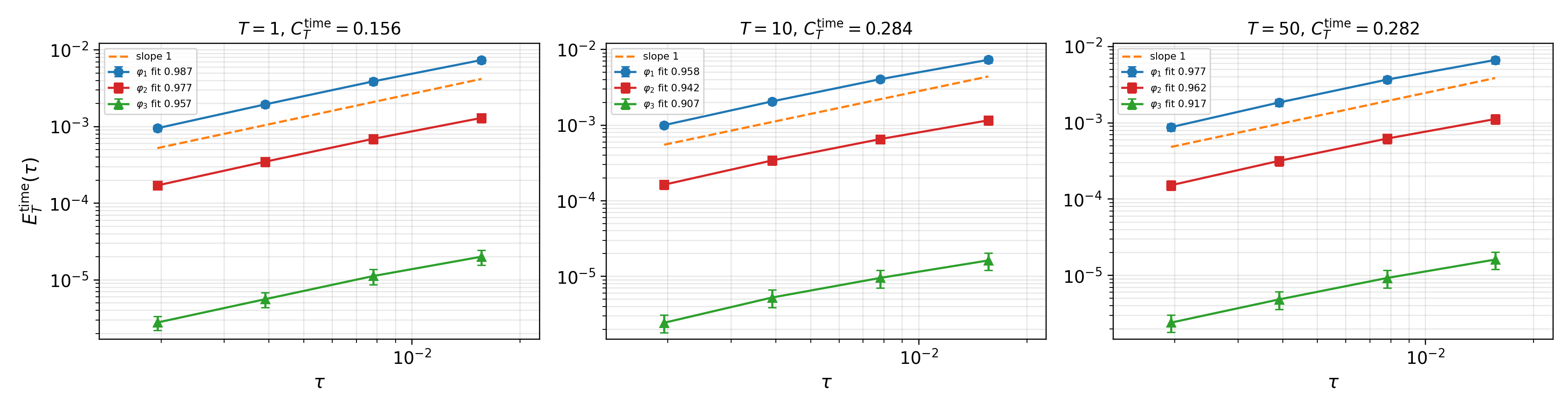}
\caption{Temporal weak errors.}
\label{fig:time}
\end{figure}

\emph{Ergodicity test.}
We take $N=32$, $N_Q=64$, $\tau=2^{-8}$, $T=50$, and $K_{\rm mc}=1000$, with
$X_0^{(1)}=1.4e_{1,1}$,
$X_0^{(2)}=-1.4e_{1,1}$, and
$X_0^{(3)}=(e_{2,1}+e_{1,2})/\sqrt2$.
For each Monte--Carlo path, all three initial states are driven by the same truncated Wiener path. We monitor
\begin{align*}
\varphi_1(u)&=\tanh(\langle u,e_{1,1}\rangle),\quad
\varphi_2(u)=\tanh(\langle u,e_{2,1}+e_{1,2}\rangle/\sqrt2),\quad
\varphi_3(u)=1-e^{-\|u\|^2},
\end{align*}
and
\begin{align*}
\ee_{\varphi_\ell}^{(r)}(t_j)
&=K_{\rm mc}^{-1}\sum_{m=1}^{K_{\rm mc}}  \varphi_\ell(X_j^{N,\tau,(m)}(X_0^{(r)})).
\end{align*}
Figure~\ref{fig:ergodic} shows the merging of the ensemble means $\ee_{\varphi_\ell}^{(r)}$ from the three initial states, illustrating the loss of memory of initial data and supporting the ergodic behavior related to Corollary \ref{cor-erg}.

\begin{figure}[t]
\centering
 \includegraphics[width=\textwidth]{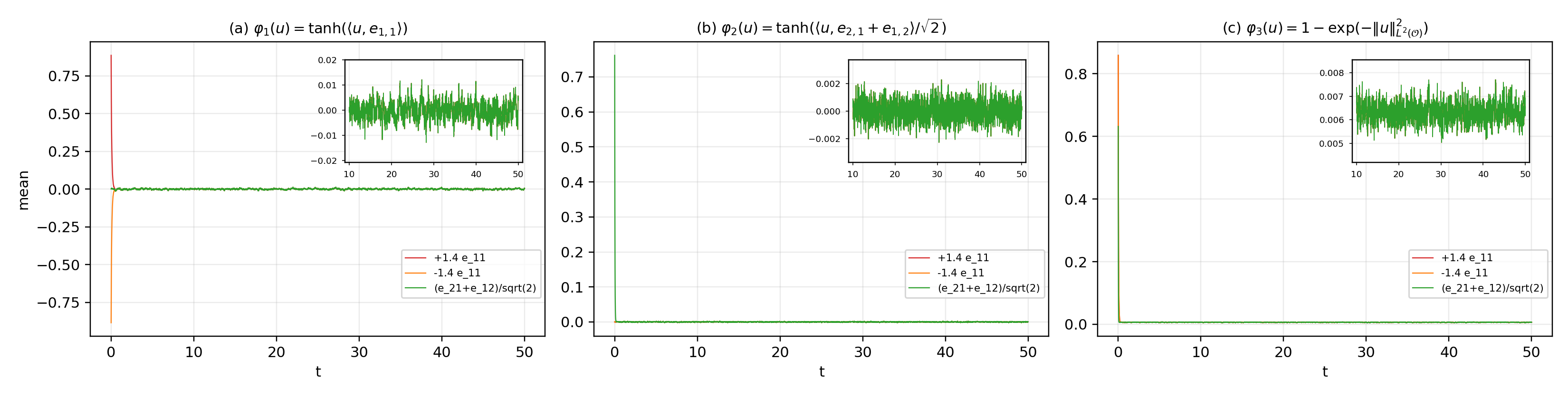}
\caption{Ergodicity test.}
\label{fig:ergodic}
\end{figure}

\section*{Appendix}
\newcounter{appsec}
\renewcommand{\theappsec}{\Alph{appsec}}
\providecommand{\theHappsec}{}
\renewcommand{\theHappsec}{appendix.\arabic{appsec}}
\makeatletter
\@addtoreset{equation}{appsec}
\makeatother
\renewcommand{\theequation}{\theappsec.\arabic{equation}}
\providecommand{\theHequation}{}
\renewcommand{\theHequation}{appendix.\arabic{appsec}.\arabic{equation}}

\refstepcounter{appsec}
\subsection*{Appendix \theappsec. Proof of the estimates in Remark~\ref{rk-F-bounds}}\label{app-A}
The estimates \eqref{ftau'}--\eqref{f-ftau} follow directly from
\cite[Lemma~4.2]{LS25}. By Assumption~\ref{ap-f} and the definition \eqref{f-tau} of $f_\tau$, one has $|f_\tau(\xi)|\le C(1+|\xi|^{q+1})$ and $|f_\tau'(\xi)|\le C(1+|\xi|^q)$, $\xi \in \rr$. 
Hence, the chain rule yields \eqref{F-bounds-0} and \eqref{F-bounds-1}.
It remains to prove \eqref{D2F-d} and \eqref{D3F-d}.
By the duality between $\dot H^{-2\varrho}$ and $\dot H^{2\varrho}$, it is enough to estimate $\langle D^2F(w)(u_1,u_2),z\rangle$ for $z \in \dot H^{2\varrho}$.
By the definition of the Nemytskii operator and Assumption~\ref{ap-f},
\begin{align*}
|\langle D^2F(w)(u_1,u_2),z\rangle|
&\le C(1+\|w\|_{L_\xi^ \infty}^{q-1}) \int_{\mathcal O}|u_1(\xi)||u_2(\xi)||z(\xi)|\dd \xi .
\end{align*}
Choose $r_1,r_2,r_3 \in [2, \infty]$ such that
\begin{align*}
&\frac1{r_1}+\frac1{r_2}+\frac1{r_3}=1,\qquad
\frac1{r_j}\ge \frac12-\frac{\mu_j}{d},~j=1,2,\qquad
\frac1{r_3}\ge \frac12-\frac{2\varrho}{d} .
\end{align*}
Such a choice is possible since $\mu_1+\mu_2+2\varrho>d/2$.
For $d=1,2,3$, the Sobolev embeddings $\dot H^{\mu_j}\hookrightarrow L_\xi^{r_j}$, $j=1,2$, and $\dot H^{2\varrho}\hookrightarrow L_\xi^{r_3}$ hold. Hence, by H\"older inequality,
\begin{align*}
|\langle D^2F(w)(u_1,u_2),z\rangle|
&\le C(1+\|w\|_{L_\xi^ \infty}^{q-1}) \|u_1\|_{\mu_1}\|u_2\|_{\mu_2}\|z\|_{2\varrho}.
\end{align*}
Taking the supremum over all $z \in \dot H^{2\varrho}$ with $\|z\|_{2\varrho}\le1$ gives \eqref{D2F-d}.
Similarly,  
\begin{align*}
|\langle D^3F(w)(u_1,u_2,u_3),z\rangle|
&\le C(1+\|w\|_{L_\xi^ \infty}^{q-2})
 \int_{\mathcal O}|u_1(\xi)||u_2(\xi)||u_3(\xi)||z(\xi)|\dd \xi,
~ z \in \dot H^{2\varrho}.
\end{align*}
Choose $r_1,r_2,r_3,r_4 \in [2, \infty]$ such that
\begin{align*}
&\sum_{j=1}^4\frac1{r_j}=1,\qquad
\frac1{r_j}\ge \frac12-\frac{\nu_j}{d},~j=1,2,3,\qquad
\frac1{r_4}\ge\max\{ \frac12-\frac{2\varrho}{d},0\} .
\end{align*}
Such a choice is possible since $\nu_1+\nu_2+\nu_3+\min\{2\varrho,\frac d 2\}>d$.
Hence, by H\"older inequality and the Sobolev embeddings,
\begin{align*}
|\langle D^3F(w)(u_1,u_2,u_3),z\rangle|
&\le C(1+\|w\|_{L_\xi^ \infty}^{q-2})
\prod_{j=1}^3\|u_j\|_{\nu_j}\|z\|_{2\varrho}.
\end{align*}
Taking the supremum over $\|z\|_{2\varrho}\le1$ gives \eqref{D3F-d}.

\refstepcounter{appsec}
\subsection*{Appendix \theappsec. Smoothing estimates}

There exist positive constants $C$ and $c \in (0,\lambda_1)$ such that 
\begin{align}
\|(-A)^\beta S(t)\|_{\LL} \le C t^{-\beta}e^{-c t}, \quad & \beta>0,~ t \in \rr_+, \label{smo}\\ 
\|(-A)^\beta S_{N,\tau}^j \PP_N\|_{\LL} \le C t_j^{-\beta}e^{-ct_j}, \quad & \beta \in [0, 1],~ j \in \nn_+, \label{smo-pn}\\ 
\|(-A)^{-\beta} ({\rm Id} - \PP_N)\|_{\LL} \le \lambda_N^{-\beta}, \quad & \beta>0, \label{pn} \\
\|(-A)^{-\beta}(S_{N,\tau} - {\rm Id} )\|_{\LL} \le C \tau^\beta, \quad & \beta \in [0, 1],  \label{sn} \\
\|(-A_N)^{-\beta}(e^{\tau A_N}-S_{N,\tau})\|_{\LL} \le C\tau^\beta, \quad & \beta \in [0, 1],  \label{sn+} \\
 \int_0^{a_j} \|(-A_N)^{3/2} S_{N,\tau}^{j+1-\ell(s)} \PP_N\|_{\LL}^2\,\dd s \le C, \quad & j \in \nn_+. \label{noise-smo-est}
\end{align}

\begin{proof}
The first four estimates are well-known by expanding the Fourier modes.
By the spectral decomposition of $-A_N$ and the elementary estimate
$\sup_{x>0}x^{-\beta}|e^{-x}-(1+x)^{-1}|< \infty$ for $\beta \in [0,1]$, we obtain
\begin{align*}
\|(-A_N)^{-\beta}(e^{\tau A_N}-S_{N,\tau})\|_{\LL} \le \tau^\beta \sup_{x>0}x^{-\beta} |e^{-x}-(1+x)^{-1} | \le C\tau^\beta .
\end{align*} 
The last one is trivial, if $a_j:=(t_j-1)_+=0$, $j \in \nn_+$. 
Otherwise, for $s \in [0, a_j]$, one has $t_{j+1}-[s]_\tau \ge 1$. 
By spectral decomposition,
\begin{align*}
 \int_0^{a_j}
\|(-A_N)^{3/2}S_{N,\tau}^{j+1-\ell(s)}\PP_N\|_{\LL}^2\,\dd s
& \le C \int_0^{a_j} (t_{j+1}-[s]_\tau)^{-3} (1+\lambda_1\tau)^{-(j+1-\ell(s))} \dd s \\
& \le C\tau\sum_{r=1}^{ \infty}(1+\lambda_1\tau)^{-r}
\le C.
\end{align*}
We note that $\ell(s)$ and $[s]_\tau$ above were introduced in Section~\ref{sec2}.
This shows \eqref{noise-smo-est}.
\end{proof}

\refstepcounter{appsec}
\subsection*{Appendix \theappsec. Discrete convolution estimates}
Let $c>0$, $\eta \in [0,1)$, and $\varepsilon > 0$.
There exists $C>0$ such that 
\begin{align}
& \tau\sum_{\ell=1}^{ \infty}(\ell\tau)^{-1+\varepsilon}e^{-c\ell\tau} \le C, \label{disc-sum}\\
& \sup_{m \in \nn_+} \Big( \int_0^{t_m} + \int_{a_m}^{t_m}\Big) (t_m-[s]_\tau)^{-1+\varepsilon}e^{-c(t_m-[s]_\tau)} \dd s  \le C, \label{disc}\\
&\sum_{i=1}^{m-1} \int_{t_i}^{t_{i+1}} \phi_{\eta}(t_m-t) (1+t_i^{-1+\varepsilon}e^{-ct_i}) \dd t \le C (1+t_m^{-\max\{\eta-\varepsilon,0\}} ) ,\label{ele-int}
\end{align}
where $\phi_{\eta}(\cdot)$ is defined in \eqref{phi-a}.

\begin{proof}
The first bound is a standard sum-integral comparison:
\begin{align*}
\tau\sum_{\ell=1}^{ \infty}(\ell\tau)^{-1+\varepsilon}e^{-c\ell\tau}
\le C \int_0^ \infty s^{-1+\varepsilon}e^{-cs}\,\dd s
\le C< \infty.
\end{align*}
For the next two bounds, we use the piecewise constancy of $[s]_\tau$ to write
\begin{align*}
& \Big( \int_0^{t_m} + \int_{a_m}^{t_m}\Big) (t_m-[s]_\tau)^{-1+\varepsilon}e^{-c(t_m-[s]_\tau)} \dd s \\
& \le\tau\sum_{i=1}^{m}(i\tau)^{-1+\varepsilon}e^{-c i \tau} 
+ \tau\sum_{i=1}^{\min\{m,\lceil 1/\tau\rceil\}}(i \tau)^{-1+\varepsilon}e^{-c i \tau},
\end{align*}
which is bounded by $C (1+ \int_0^ \infty s^{-1+\varepsilon}e^{-cs}\,\dd s )$, proving \eqref{disc}. For $t \in [t_i,t_{i+1}]$, $i\ge1$, we have $t\le2t_i$, so that
$t_i^{-1+\varepsilon}e^{-ct_i}
\le C t^{-1+\varepsilon}e^{-ct/2}$.
Hence, the left-hand side of \eqref{ele-int} is bounded by
$C \int_0^{t_m}
(1+(t_m-t)^{-\eta})e^{-c(t_m-t)}
(1+t^{-1+\varepsilon}e^{-ct/2})\,\dd t$, which is uniformly bounded for $t_m\ge1$. For $0<t_m\le1$, the only
mixed singularity gives, by scaling,
\begin{align*}
 \int_0^{t_m}(t_m-t)^{-\eta}t^{-1+\varepsilon}\,\dd t
\le C t_m^{\varepsilon-\eta}
\le C(1+t_m^{-\max\{\eta-\varepsilon,0\}}),
\end{align*}
while all remaining terms are uniformly bounded. This proves \eqref{ele-int}.
\end{proof}

\refstepcounter{appsec}
\subsection*{Appendix \theappsec. One-step error estimate}
Let $\alpha \in [0,1/2)$ and $X_0 \in \dot H^{1+\gamma}$ with $\gamma \in [0,1)$ satisfying \eqref{gamma}, and assume that Assumptions~\ref{ap-f}-\ref{ap-G} hold. There exist constants $C>0$ and $\tau_{\max}\in(0,1)$ such that for any $M,N \in \nn_+$ with $M>N$ and $\tau \in (0,\tau_{\max}]$,
\begin{align}
\|\PP_NX^M(t_1,X_0^N)-X_1^N\|_{L_\omega^2\dot H^{-2\alpha}}
\le C\tau^{\frac12+\alpha}(1+\|X_0\|_{1+\gamma}^{3q+1}).
\label{one-err}
\end{align}

\begin{proof}
We write $\PP_NX^M(t_1,X_0^N)-X_1^N=\sum_{k=1}^6J_k$, where
\begin{align*}
J_1&:=(\mathrm e^{t_1A_N}-S_{N,\tau})X_0^N,\\
J_2&:= \int_0^{t_1}(\mathrm e^{(t_1-s)A_N}-S_{N,\tau})\PP_NF(X^M(s,X_0^N))\,\dd s,\\
J_3&:= \int_0^{t_1}S_{N,\tau}\PP_N(F(X^M(s,X_0^N))-F(X_0^N))\,\dd s,\\
J_4&:=t_1S_{N,\tau}\PP_N(F(X_0^N)-F_\tau(X_0^N)),\\
J_5&:= \int_0^{t_1}(\mathrm e^{(t_1-s)A_N}-S_{N,\tau})\PP_NG(X^M(s,X_0^N))\,\dd W(s),\\
J_6&:= \int_0^{t_1}S_{N,\tau}\PP_N(G(X^M(s,X_0^N))-G(X_0^N))\,\dd W(s).
\end{align*}
\eqref{sn+} implies $\|J_1\|_{L_\omega^2\dot H^{-2\alpha}} \le C\tau^{\frac12+\alpha}\|X_0\|_1$. 
Using \eqref{sn}, \eqref{f-grow}, \eqref{f-ftau}, \eqref{gamma}, \eqref{G-lip}, the boundedness of $S_{N,\tau}$, It\^o isometry, and \cite[Lemma~4.1]{LS25}, we obtain
\begin{align*}
\sum_{k=2}^5\|J_k\|_{L_\omega^2\dot H^{-2\alpha}}
\le C\tau^{\frac12+\alpha}(1+\|X_0\|_{1+\gamma}^{3q+1}).
\end{align*}
Moreover, a standard argument based on the mild formulation yields the temporal H\"older estimate (see \cite[Theorem~3.2]{LQ21})
\begin{align*}
\|X^M(s,X_0^N)-X_0^N\|_{L_\omega^2L_\xi^2}
\le Cs^{1/2}(1+\|X_0\|_{1+\gamma}^{3q+1}),\qquad s \in [0,t_1].
\end{align*}
Therefore, by It\^o isometry and \eqref{G-lip}, we get
\begin{align*}
\|J_6\|_{L_\omega^2\dot H^{-2\alpha}}^2
&\le C \int_0^{t_1}\|X^M(s,X_0^N)-X_0^N\|_{L_\omega^2L_\xi^2}^2\,\dd s
\le C\tau^2(1+\|X_0\|_{1+\gamma}^{3q+1})^2.
\end{align*}
Summing the above estimates proves \eqref{one-err}.
\end{proof}

\bibliographystyle{abbrv} 
\bibliography{references.bib}
\end{document}